\def\pgfdecoratedcontourdistance{0pt}
    \let\pgfdecoratedcontourdistance=\pgfmathresult}%
        \let\shorten=\pgfmathresult%
\newtheorem{lemma}{Lemma}
\newtheorem{theorem}{Theorem}
\newtheorem{corollary}{Corollary}
\newtheorem{prop}{Proposition}
\newtheorem{defn}{Definition}
\newtheorem{eg}{Example}
\newtheorem{remark}{Remark}
\title{On the Tur\'an density of $\{1, 3\}$-Hypergraphs}
\author{
Shuliang Bai \thanks{University of South Carolina, Columbia, SC 29208,
({\tt sbai@math.sc.edu}).} \and 
Linyuan Lu
\thanks{University of South Carolina, Columbia, SC 29208,
({\tt lu@math.sc.edu}). This author was supported in part by NSF
grant DMS 1600811.}
}
\begin{document}
\maketitle
\begin{abstract}
In this paper, we consider the Tur\'an problems on $\{1,3\}$-hypergraphs.  We prove that 
a $\{1, 3\}$-hypergraph is degenerate if and only if it's $H^{\{1, 3\}}_5$-colorable, where  
 $H^{\{1, 3\}}_5$ is a hypergraph with vertex set  $V=[5]$ and edge set $E=\{\{2\}, \{3\}, \{1, 2, 4\}, 
 \{1, 3, 5\}, \{1, 4, 5\}\}.$ Using this result, we further prove that for any finite set $R$ of distinct positive integers, except the case $R=\{1, 2\}$,  there always exist non-trivial degenerate $R$-graphs.
We also compute the Tur\'an densities of some small $\{1,3\}$-hypergraphs. 
\end{abstract}

\begin{keyword}
Tur\'an density, non-uniform hypergraph, degenerate $R$-graph. 
\end{keyword}

\section{Background}
Tur\'an theory is an important and active area in the extremal combinatorics. In 1941, Tur\'an \cite{Turan} determined the graph with maximum number of edges among all simple graphs on $n$ vertices that doesn't contain the complete graph $K_\ell$ as a sub-graph.  For any $\epsilon> 0$,  the Tur\'an density $\pi(H)$ of a graph $H$ is the least number $\alpha$ so that any large graph with edge density
$(\alpha+\epsilon)$ will always contain a sub-graph isomorphic to $H$.
Erd\H{o}s-Simonovits-Stone theorem \cite{ESS1, ESS2} determined
the Tur\'an densities of all non-bipartite graphs. (The Tur\'an density of any bipartite graph
is always $0$. Those are called the {\it degenerate} graphs.)

Tur\'an problems
on uniform hypergraphs have been actively studied for many decades.
However,  on non-uniform hypergraphs, these problems are rarely considered.  
Johnston and Lu \cite{JLU} established the framework of the Tur\'an theory for non-uniform hypergraphs. A hypergraph $H=(V, E)$ consists of a vertex set $V$ and
an edge set $E\subseteq 2^V$. Here the edges of $E$ could have
different cardinalities. The set of all the cardinalities of edges in $H$ is
denoted by $R(H)$, the set of edge types. In this paper, we will fix a finite set $R$ of 
positive integers and consider all simple hypergraphs $H$ with $R(H)\subseteq R$, which are called
$R$-hypergraphs (or $R$-graphs, for short). 
We say a hypergraph is simple if there is at most one edge connecting any collection of vertices. A general hypergraph allows every edge to be a multi-set of vertices.

For example, $\{2\}$-graphs are just graphs and $\{r\}$-graphs are just $r$-uniform hypergraphs. An $R$-graph $H$ on $n$ vertices is denoted as $H_n^R$. We denote $H^r$ as the $r$th {\it level hypergraph} of $H$ which consists of all edges of cardinality $r$ of $H$.  
We denote $K^R_n$ as the complete hypergraph on $n$ vertices with edge set 
$\cup_{i\in R}\binom{[n]}{i}$. 
We say $H'$ is a {\em sub-graph} of $H$, denoted by $H'\subseteq H$, if there exists a 1-1 map $f: V (H') \rightarrow V (H)$ so that $f(e) \in E(H)$ for any $e\in E(H')$. A necessary condition for $H'\subseteq H$ is $R(H')\subseteq R(H)$.
A chain $C^R$ is a special $R$-graph containing exactly one edge of each size such that any pair of these edges are 
comparable under inclusion relation. 

To measure the edge density of a non-uniform hypergraph, we use the Lubell function, 
which is the expected number of edges in the hypergraph hit by a random full chain \cite{JLU}.
 For a non-uniform hypergraph $G$ on $n$ vertices, the Lubell function of $G$ is defined by 
\begin{align*}
h_n(G):= \sum_{e\in E(G)}\frac{1}{\binom{n}{|e|}}=\sum_{r\in R(G)}\frac{|E(G^r)|}{\binom{n}{r}}.
\end{align*}

Given a family of hypergraphs $\mathcal H$ with common set of edge types $R$,
we say $G$ is {\it $\mathcal{H}$-free} if
$G$ doesn't contain any member of $\mathcal{H}$ as a sub-graph.
Let $\pi_n(\mathcal H)$ be the maximum edge density of any $\mathcal{H}$-free $R$-graph on $n$
vertices. The Tur\'an density of  $\mathcal H$ is defined to be:
\begin{align*}
\pi(\mathcal H)& =\lim_{n\to \infty}\pi_n(\mathcal H)\\
&=
\lim_{n \to \infty} {\max}\left\{h_n(G): |v(G)|=n, G\subseteq K^R_n, \text{and $G$ is $\mathcal{H}$-free}\right\}.
\end{align*}

A hypergraph $G:=G^R_n$ is {\it extremal} with respect to the family $\mathcal H$ if 
$G$ is {\it $\mathcal{H}$-free} and $h_n(G)$ is maximized.

Lu and Johnston \cite{JLU} proved that this limit always exists by a simple average argument of Katona-Nemetz-Simonovits theorem \cite{KNS}. They completely classified the Tur\'an densities of $\{1, 2\}$-graphs.
\begin{theorem}[Lu and Johnston \cite{JLU}]\label{resultsofJLU}
For any hypergraph H with $R(H)=\{1, 2\}$, we have 
\[
  \pi(H)=\begin{cases}
               2-\frac{1}{\mathcal{X}(H^2)-1} &\text{if $H^2$ is not bipartite;}\\
               \frac{5}{4} & \text{if $H^2$ is bipartite and 
                               ${\min}\{k:\overline{P}_{2k}\subseteq H\}=1$;} \\
               \frac{9}{8}& \text{if $H^2$ is bipartite and 
                               ${\min}\{k:\overline{P}_{2k}\subseteq H\}\geq 2$;}\\
               1 & \text{if $H^2$ is bipartite and 
                               $\overline{P}_{2k}\nsubseteq H$ for any $k\geq 1$.}
            \end{cases}
\]
where $H^2\in H$ is the graph with all edges of cardinality 2. $\overline{P}_{2k}$ is a closed path of length $2k$, and $\mathcal{X}(H^2)$ is the chromatic number of $H^2$.
\end{theorem}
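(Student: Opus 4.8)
The plan is to prove matching lower and upper bounds in each case, built on the decomposition $h_n(G)=\frac{|E(G^1)|}{n}+\frac{|E(G^2)|}{\binom{n}{2}}$ and on the single structural fact that, since $V_1(\cdot)$ denotes the set of vertices carrying a singleton edge (so $V_1(H)=\{v:\{v\}\in E(H)\}\neq\emptyset$ here), a copy of $H$ in $G$ is exactly an embedding of the level hypergraph $H^2$ into $G^2$ that sends $V_1(H)$ into $W:=V_1(G)$. The first consequence I would record is that $G^2[W]$ (the subgraph of $G^2$ induced on $W$) must therefore be $H^2$-free; writing $m=|W|$ and $x=m/n$, the Erd\H{o}s--Stone--Simonovits theorem then gives $|E(G^2[W])|\le\bigl(1-\frac{1}{\mathcal{X}(H^2)-1}\bigr)\binom{m}{2}+o(n^2)$, which reads as $o(n^2)$ precisely when $H^2$ is bipartite. (The universal bound $\pi(H)\le 2$ is immediate from $|E(G^1)|\le n$ and $|E(G^2)|\le\binom{n}{2}$.)

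For the lower bounds I would exhibit four $H$-free constructions. If $H^2$ is non-bipartite, take $G$ with all $n$ singletons and a Tur\'an graph $T_{n,\mathcal{X}(H^2)-1}$ as its $2$-level: this is $H$-free since $G^2$ is even $H^2$-free, and $h_n(G)\to 2-\frac{1}{\mathcal{X}(H^2)-1}$. If $H^2$ is bipartite, everything is governed by the position of $V_1(H)$ relative to a proper $2$-colouring of $H^2$, and one checks that $\overline{P}_{2k}\subseteq H$ for some $k$ exactly when some connected component of $H^2$ has $V_1(H)$ meeting both of its colour classes, while $\overline{P}_2\subseteq H$ exactly when $V_1(H)$ is not independent in $H^2$. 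Accordingly: (a) if $\overline{P}_2\subseteq H$, put the singletons on a set $W$ with $|W|=n/2$ that is independent in $G^2$ and make $G^2$ complete on all other pairs --- any embedding of $H^2$ sending $V_1(H)$ into $W$ would need an edge inside $W$, so $G$ is $H$-free and $h_n(G)\to\tfrac12+\tfrac34=\tfrac54$; (b) if $\overline{P}_2\not\subseteq H$ but $\overline{P}_{2k}\subseteq H$ for some $k$, let $G^2=K_{3n/4,\,n/4}$ with the singletons on the larger side $W$ --- any embedding of $H^2$ must $2$-colour each component correctly, so some vertex of $V_1(H)$ is forced onto the smaller side, hence $G$ is $H$-free and $h_n(G)\to\tfrac34+\tfrac38=\tfrac98$; (c) if $H$ contains no $\overline{P}_{2k}$, all singletons together with any $H^2$-free $2$-level gives only $h_n(G)=1+o(1)$, which will turn out to be optimal.

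For the upper bounds I would split $e_2:=|E(G^2)|=|E(G^2[W])|+e(W,\overline{W})+|E(G^2[\overline{W}])|$, where $\overline{W}=V(G)\setminus W$ and $e(W,\overline{W})$ counts edges of $G^2$ with one end on each side, and use that the edges meeting $\overline{W}$ number at most $\bigl(2(1-x)-(1-x)^2\bigr)\binom{n}{2}+o(n^2)$. The non-bipartite case is then short: assuming $h_n(G)>2-\frac{1}{\mathcal{X}(H^2)-1}+\varepsilon$ forces $m>n/2$, and a short computation (using $\mathcal{X}(H^2)\ge 3$, so that the relevant quadratic in $1-x$ has the correct sign) shows $G^2[W]$ has edge density at least $1-\frac{1}{\mathcal{X}(H^2)-1}+\tfrac{\varepsilon}{2}$; by Erd\H{o}s--Stone--Simonovits this forces $H^2\subseteq G^2[W]$, hence $H\subseteq G$, a contradiction. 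In the bipartite case $|E(G^2[W])|=o(n^2)$, so $h_n(G)\le x+\bigl(2(1-x)-(1-x)^2\bigr)+o(1)=1+x-x^2+o(1)\le\tfrac54+o(1)$, which already matches (a). To sharpen this in (b) and (c) I would prove an embedding lemma: if $V_1(H)$ is independent in $H^2$ and both $e(W,\overline{W})\ge\delta n^2$ and $|E(G^2[\overline{W}])|\ge\delta n^2$ (with $m,n-m\ge\delta n$), then $G\supseteq H$ --- embed the bipartite graph $H^2[V(H)\setminus V_1(H)]$ into the positive-density graph $G^2[\overline{W}]$ (supersaturation, valid because a bipartite target has Tur\'an density $0$) and route the independent set $V_1(H)$ into $W$, using that almost every bounded-size subset of $\overline{W}$ has many common neighbours in $W$ (K\H{o}v\'ari--S\'os--Tur\'an in the dense bipartite graph between $W$ and $\overline{W}$). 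When $H$ contains no $\overline{P}_{2k}$, the target $H^2$ lies entirely between its two colour classes, so $e(W,\overline{W})\ge\delta n^2$ alone already yields $H\subseteq G$. Hence an $H$-free $G$ has $e(W,\overline{W})=o(n^2)$ or $|E(G^2[\overline{W}])|=o(n^2)$ in case (b), and $e(W,\overline{W})=o(n^2)$ in case (c); feeding the surviving possibility into $h_n(G)\le x+\frac{e_2}{\binom{n}{2}}$ leaves $h_n(G)\le\max\{3x-2x^2,\ 1-x+x^2\}+o(1)=\tfrac98+o(1)$ and $h_n(G)\le 1-x+x^2+o(1)\le 1+o(1)$ respectively, matching the constructions.

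The step I expect to be the main obstacle is the embedding lemma in the bipartite case: one must embed the fixed bipartite graph $H^2$ into $G^2$ with its designated colour class $V_1(H)$ landing inside $W$, simultaneously realising the edges of $H^2$ interior to $V(H)\setminus V_1(H)$ and those crossing to $V_1(H)$. I expect this to require a dependent-random-choice argument: first locate a large subset of $\overline{W}$ all of whose bounded subsets have many common neighbours in $W$ and which still induces a positive-density subgraph of $G^2$, then supersaturate inside it. Everything else is a direct construction or a single invocation of Erd\H{o}s--Stone--Simonovits, so this is where the real work sits --- and it is also the step that would have to be redesigned, with a genuinely $3$-uniform flavour, to handle $\{1,3\}$-hypergraphs.
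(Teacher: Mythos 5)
Two preliminary remarks: this theorem is quoted in the paper from Johnston--Lu \cite{JLU} and is not proved there, so there is no in-paper argument to compare against; and most of your plan is sound. Your reading of $\overline{P}_{2k}$ (an odd path between two vertices of $V_1(H)$, the case $k=1$ being two adjacent black vertices) is the one consistent with the theorem and with the paper's later claim that all degenerate $\{1,2\}$-graphs are trivial; your four constructions are correct; and your upper bounds in the non-bipartite case (via ``$G^2[W]$ is $H^2$-free'' plus Erd\H{o}s--Stone--Simonovits), in the $5/4$ case (via $h_n\le x+1-x^2+o(1)$), and in the degenerate case (K\H{o}v\'ari--S\'os--Tur\'an giving a $K_{s,s}$ between $W$ and $\overline{W}$, which swallows any trivial $H$) all go through.

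The genuine gap is exactly where you predicted the difficulty, but it is worse than a missing argument: the embedding lemma you want is false as stated, so the $9/8$ upper bound is not established. Take $H$ to be the path $a\,u\,v\,b$ with $1$-edges at $a$ and $b$ (so $V_1(H)=\{a,b\}$ is independent in $H^2$ and $H$ belongs to the $9/8$ case), and let $G$ on $n$ vertices be built from three parts $W,\overline{W}_1,\overline{W}_2$ of size $n/3$, with singletons exactly on $W$, $G^2$ equal to the complete bipartite graph between $W$ and $\overline{W}_1$ together with a clique on $\overline{W}_2$. Then $e(W,\overline{W})=n^2/9$ and $|E(G^2[\overline{W}])|\sim n^2/18$, yet $G$ is $H$-free: the black vertices $a,b$ must land in $W$, forcing $u,v$ into $\overline{W}_1$, which spans no edge. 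So your dichotomy ``$H$-free implies $e(W,\overline{W})=o(n^2)$ or $|E(G^2[\overline{W}])|=o(n^2)$'' fails, and no dependent-random-choice argument can prove the lemma in this form: the set $U\subseteq\overline{W}$ produced by DRC lives (in this example) inside $\overline{W}_1$ and spans no edges, while the dense part of $G^2[\overline{W}]$ sits on $\overline{W}_2$, whose vertices have no neighbours in $W$. The correct statement must concern only those edges of $G^2[\overline{W}]$ both of whose endpoints have many neighbours in $W$; consequently you must split $\overline{W}$ according to $W$-degree and redo the density optimization over the resulting finer partition, and that recomputation (not the constructions) is the real content of the $9/8$ case that your proposal leaves unproved. (My example has Lubell density only about $2/3$, so it does not contradict the theorem --- it only breaks your lemma and hence your route to the bound.)
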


It is trivial that $\pi(H)\leq |R(H)|$ and it is easy to see that $\pi(H)\geq |R(H)|-1$, since we can take 
an $(|R(H)|-1)$-complete hypergraph $K_n^{|R(H)|-1}$ without the appearance of $H$. 
We are interested in these $R$-graphs with the smallest Tur\'an density. 
\begin{defn}[Degenerate hypergraphs] 
A hypergraph $H$ is called {\em degenerate} if $\pi(H)= |R(H)| -1 $.
\end{defn} 
What do the degenerate  $R$-graphs look like? For the special case $R=\{r\}$,  Erd\H{o}s \cite{Erdos} showed that an $r$-uniform hypergraph $H$ is degenerate if and only
if it is  $r$-partite, that is, a sub-graph of a blow-up of a single edge of cardinality $r$. 
As a natural extension of a single edge, 
the chain $C^R$ for any set $R$ is degenerate.  
Thus every sub-graph of a blow-up of a chain is also degenerate.
We say a degenerate $R$-graph is {\em trivial} if it is a sub-graph of a blow-up of the chain $C^{R}$. 
For $R=\{1, 2\}$, by Theorem \ref{resultsofJLU} all degenerate $\{1, 2\}$-graphs are trivial. 
However,  a nontrivial degenerate $\{2, 3\}$-graph is found in \cite{JLU}.
It indicates that this question is more intrigue for other  $R$-graphs.

In this paper, we will 
give a necessary and sufficient condition for the degenerate $\{1, 3\}$-graphs. 
Given two graphs $G$ and $H$, we say $G$ is $H$-colorable if and only if there exists a hypergraph homomorphism $f$ from $G$ to $H$. (see Definition \ref{homodef}). And we have
\begin{theorem}\label{degetheorem}
 A $\{1, 3\}$-hypergraph is degenerate if and only if it's $H^{\{1, 3\}}_5$-colorable, where  
 $H^{\{1, 3\}}_5$ is a hypergraph with vertex set  $V=[5]$ and  edge set $$E=\{\{2\}, \{3\}, \{1, 2, 4\}, 
 \{1, 3, 5\}, \{1, 4, 5\}\}.$$
\end{theorem}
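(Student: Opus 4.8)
The plan is to prove the two directions separately, using throughout the translation that $H$ is $H^{\{1,3\}}_5$-colorable if and only if $H$ is a subgraph of some balanced blow-up $H^{\{1,3\}}_5[t]$ (given a homomorphism $f\colon V(H)\to[5]$, send $v$ to $(f(v),\cdot)$, with distinct second coordinates inside each colour class). The small cases $R(H)\subsetneq\{1,3\}$ are routine (for $R(H)=\{1\}$ trivially, and for $R(H)=\{3\}$ note $H\to H^{\{1,3\}}_5$ iff $H^3\to H_5^3$ iff $H^3$ is $3$-partite, so this reduces to Erd\H os's theorem), so I concentrate on $R(H)=\{1,3\}$.

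For the ``if'' direction, by monotonicity of $\pi$ under subgraphs it suffices to prove $\pi(H^{\{1,3\}}_5[t])=1$ for every $t$ (the lower bound is witnessed by the all-$1$-edge hypergraph, which contains no $3$-edge). I would obtain the base case $t=1$ from Theorem \ref{resultsofJLU} by a link/averaging argument. For $v\in V(G)$, let $G_v$ be the $\{1,2\}$-graph on $V(G)\setminus\{v\}$ whose $1$-edges are the $1$-edge vertices of $G$ other than $v$ and whose $2$-edges are the pairs $\{a,b\}$ with $\{v,a,b\}\in E(G^3)$. A short computation gives $h_n(G)=\tfrac1n\sum_{v}h_{n-1}(G_v)$, and $G$ contains $H^{\{1,3\}}_5$ with vertex $1$ played by $v$ precisely when $G_v$ contains $Q_5$, the link of $H^{\{1,3\}}_5$ at vertex $1$: the $\{1,2\}$-graph on $\{2,3,4,5\}$ with $2$-edges $\{2,4\},\{4,5\},\{5,3\}$ (a path) and $1$-edges $\{2\},\{3\}$. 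Since $Q_5^2$ is a path (hence bipartite) and the two $1$-edges of $Q_5$ sit at the two ends of that path, $Q_5$ contains no closed path $\overline P_{2k}$, so $\pi(Q_5)=1$ by the last case of Theorem \ref{resultsofJLU}. Hence if $G$ is $H^{\{1,3\}}_5$-free then every $G_v$ is $Q_5$-free and $h_n(G)\le\max_v h_{n-1}(G_v)\le\pi_{n-1}(Q_5)\to1$. Passing from $t=1$ to general $t$ is routine supersaturation: averaging $h_m$ over $m$-vertex subsets (using $\mathbb E_W[h_m(G[W])]=h_n(G)$) shows $h_n(G)\ge 1+\epsilon$ forces $\Omega(n^5)$ copies of $H^{\{1,3\}}_5$, and $\Omega(n^5)$ copies of a fixed $5$-vertex hypergraph force the blow-up.

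For the ``only if'' direction I would argue the contrapositive. Unwinding the definition of a homomorphism into $H^{\{1,3\}}_5$ shows that $H$ is $H^{\{1,3\}}_5$-colorable iff (a) every $3$-edge of $H$ contains at most one $1$-edge vertex and (b) some proper $3$-colouring of $H^3$ (rainbow on every $3$-edge) has a colour class disjoint from the set $S_H$ of $1$-edge vertices: one merges the five colour classes of a homomorphism as $\{1\}\mid\{2,5\}\mid\{3,4\}$ to obtain such a colouring, and conversely recolours class $1$ by $1$ and splits each of the other two classes into its $1$-edge vertices (colours $2,3$) and the rest (colours $5,4$). Failure of $H^{\{1,3\}}_5$-colorability therefore falls into: (i) $H^3$ is not $3$-partite; (ii) some $3$-edge carries two $1$-edges; (iii) $H^3$ is $3$-partite but every rainbow $3$-colouring meets $S_H$ in all three classes. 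In case (i), all $n$ singletons together with an extremal $H^3$-free $3$-graph is $H$-free, with Lubell value $1+\pi_3(H^3)>1$ (a non-$3$-partite $3$-graph has positive $3$-uniform Tur\'an density). In case (ii), for $V=S\sqcup T$ with $|S|=\sigma n$, the hypergraph with $1$-edges on $S$ and $3$-edges exactly the triples meeting $S$ in at most one vertex is $H$-free, with Lubell value tending to $\sigma+(1-\sigma)^2(1+2\sigma)$, which exceeds $1$ for all small $\sigma>0$.

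Case (iii) is the delicate one, and I expect it to be the main obstacle. The plan is to use the construction $G'$ on $V=P\sqcup Q$ with $1$-edges exactly on $Q$ and $3$-edges exactly the triples contained in $P$ or meeting $P$ in exactly one vertex. For the basic type-(iii) obstruction $F_3$ (vertices $a_1,a_2,b_1,b_2,c_1,c_2$; $3$-edges $a_1b_2c_2,\ a_2b_1c_2,\ a_2b_2c_1$; $1$-edges $a_1,b_1,c_1$), an embedding of $F_3$ into $G'$ would put $a_1,b_1,c_1$ in $Q$, forcing each $3$-edge to meet $P$ in exactly one vertex and hence $x+y=y+z=x+z=1$ for the indicators $x=[a_2\in P]$, $y=[b_2\in P]$, $z=[c_2\in P]$ --- impossible; so $G'$ is $F_3$-free, while $h_n(G')\to q+(1-q)^3+3q^2(1-q)$, which is $>1$ at $q=\tfrac34$. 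Since $H^{\{1,3\}}_5$-colorability is hereditary under subgraphs, it then remains to show that any $H$ with $H^3$ $3$-partite and $H\not\to H^{\{1,3\}}_5$ contains one of finitely many minimal obstructions --- $F_1$ (a single $3$-edge with two $1$-edges, case (ii)), or a type-(iii) obstruction in the family generated by $F_3$ --- each defeated by one of the constructions above. Pinning down this minimal list and verifying that a single parametrised family of constructions of the shape of $G'$ suffices for every type-(iii) obstruction is the step I expect to demand the most care; by contrast, once Theorem \ref{resultsofJLU} is granted, the density input $\pi(H^{\{1,3\}}_5)\le1$ for the ``if'' direction is immediate.
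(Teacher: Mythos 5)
Your reduction of the ``if'' direction collapses at its key step. The link $Q_5$ of $H_5^{\{1,3\}}$ at vertex $1$ is the black-ended path with $1$-edges $\{2\},\{3\}$ and $2$-edges $\{2,4\},\{4,5\},\{3,5\}$, and its Tur\'an density is \emph{not} $1$: take $X\subseteq V$ with $|X|=\tfrac34 n$, make every vertex of $X$ a $1$-edge, and let the $2$-edges be exactly the pairs with one endpoint in $X$ and one in $V\setminus X$. Every path in this graph alternates sides, so a path of length three has its endpoints on opposite sides and they cannot both be $1$-edges; hence this graph is $Q_5$-free while its Lubell value tends to $\tfrac34+2\cdot\tfrac34\cdot\tfrac14=\tfrac98$. (The notation in Theorem \ref{resultsofJLU} is for the black-ended path on $2k$ vertices, i.e.\ of odd length $2k-1$; $Q_5$ is exactly $\overline P_4$ and lands in the $\tfrac98$ case, not the last case --- and whatever the convention, the construction above settles $\pi(Q_5)\ge\tfrac98$.) Consequently your averaging bound $h_n(G)\le\pi_{n-1}(Q_5)$ only yields $\pi(H_5^{\{1,3\}})\le\tfrac98$, and no reduction through the single link at vertex $1$ can do better. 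The paper instead proves $\pi(\{K_3^{\bullet\bullet},G_4^{\bullet}\})=1$ directly by a two-parameter density count with Cauchy--Schwarz (Lemma \ref{13H5a}), and uses that $H_5^{\{1,3\}}$ is colorable by both forbidden graphs, so an $H_5^{\{1,3\}}$-free graph avoids blow-ups of both.

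Your ``only if'' direction is also not closed: your case (iii) is deferred to a classification of ``finitely many minimal obstructions,'' verified only for the single configuration $F_3$, and it is not clear that the minimal obstructions to your $3$-colouring condition even form a finite list. The paper sidesteps this entirely: a degenerate $H$ must embed in both $G_A$ and $G_B$, i.e.\ be both $H_A$- and $H_B$-colorable, hence $(H_A\times H_B)$-colorable by Lemma \ref{productcolorable}, and an explicit five-class vertex map sends $H_A\times H_B$ into $H_5^{\{1,3\}}$. Your cases (i)--(iii) are all subsumed by the two constructions $G_A$ and $G_B$; in particular your delicate case (iii) is precisely ``not $H_B$-colorable'' and is killed by $G_B$ alone, with no obstruction analysis needed. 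Your characterization of $H_5^{\{1,3\}}$-colorability via a rainbow $3$-colouring with one class free of black vertices is correct and could be kept, but both halves of the argument need to be rebuilt along the lines above.
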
 
Using this non-trivial degenerate $\{1, 3\}$-graph, we prove the following result.
\begin{theorem}\label{anyRnontrivial}
  Let $R$ be a set of distinct positive integers with $|R|\geq 2$ and $R\neq \{1, 2\}$. Then a
  non-trivial degenerate $R$-graph always exists. 
\end{theorem}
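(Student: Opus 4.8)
The plan is to produce, for every such $R$, one explicit $R$-graph $G_R$ that is degenerate but is not contained in any blow-up of the chain $C^R$. Since $|R|\ge 2$ and $R\ne\{1,2\}$, fix two elements $s<t$ of $R$ with $t\ge 3$: if $R$ is not itself such a pair then $|R|\ge 3$, and $R$ has a $2$-subset other than $\{1,2\}$. Let $A$ and $B$ be disjoint sets of $s-1$ and $t-3$ fresh vertices, and let $\mathrm{Core}$ be the $\{s,t\}$-graph on $\{1,2,3,4,5\}\cup A\cup B$ with $s$-edges $\{2\}\cup A$, $\{3\}\cup A$ and $t$-edges $\{1,2,4\}\cup B$, $\{1,3,5\}\cup B$, $\{1,4,5\}\cup B$ (so for $(s,t)=(1,3)$ it is exactly $H^{\{1,3\}}_5$; in general it is a ``lift'' of $H^{\{1,3\}}_5$ whose two levels are padded to sizes $s$ and $t$ by the \emph{common} blocks $A$ and $B$). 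Finally let $G_R$ be $\mathrm{Core}$ together with one extra edge of size $\ell$ on $\ell$ brand-new vertices for each $\ell\in R\setminus\{s,t\}$, all these extra edges on pairwise disjoint vertex sets. Then $R(G_R)=R$, and it remains to check that $G_R$ is non-trivial and degenerate.

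Non-triviality is short. Suppose a map $\phi$ realizes $G_R$ as a sub-graph of a blow-up of $C^R$. Since $C^R$ has a unique edge of each size and every vertex of $\mathrm{Core}$ lies in an edge, $\phi$ restricted to $\mathrm{Core}$ takes values in the $t$ vertices spanned by the size-$s$ and size-$t$ edges of $C^R$, is injective on each edge of $\mathrm{Core}$, and sends $s$-edges (resp.\ $t$-edges) to the size-$s$ (resp.\ size-$t$) edge; that is, it realizes $\mathrm{Core}$ as a sub-graph of a blow-up of $C^{\{s,t\}}$. Comparing the images of $\{2\}\cup A$ and $\{3\}\cup A$ forces $\phi(2)=\phi(3)$; comparing the images of $\{1,2,4\}\cup B$ and $\{1,3,5\}\cup B$ then forces $\phi(4)=\phi(5)$; but $\phi$ is injective on $\{1,4,5\}\cup B$, a contradiction. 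Hence $G_R$ is non-trivial.

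For degeneracy we must show $\pi(G_R)=|R|-1$; the lower bound is the usual ``complete on all levels but one'' construction, so the content is $\pi(G_R)\le|R|-1$, which I would obtain from two reductions. (1) \emph{$\mathrm{Core}$ is degenerate, i.e.\ $\pi(\mathrm{Core})\le 1$.} Suppose $F$ is a $\mathrm{Core}$-free $\{s,t\}$-graph on $[n]$ with $h_n(F)=d_s+d_t\ge 1+\epsilon$, where $d_s,d_t$ are the densities of the two levels. Choose disjoint sets $A^\ast,B^\ast\subseteq[n]$ of sizes $s-1,t-3$ uniformly at random, set $V'=[n]\setminus(A^\ast\cup B^\ast)$, and form the $\{1,3\}$-graph $F'$ on $V'$ whose $1$-edges are the $v$ with $\{v\}\cup A^\ast\in E(F^s)$ and whose $3$-edges are the triples $T\subseteq V'$ with $T\cup B^\ast\in E(F^t)$. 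Averaging over the random choice (using that the link of a random $(s-1)$-set in $F^s$, resp.\ of a random $(t-3)$-set in $F^t$, has expected density $d_s$, resp.\ $d_t$) gives $\mathbb{E}\,h_{|V'|}(F')\ge d_s+d_t-o(1)\ge 1+\epsilon-o(1)$, so for $n$ large some choice of $(A^\ast,B^\ast)$ yields $h_{|V'|}(F')\ge 1+\epsilon/2$. Moreover any copy of $H^{\{1,3\}}_5$ inside $F'$ lies in $V'$, hence is disjoint from $A^\ast\cup B^\ast$, and therefore pulls back to a copy of $\mathrm{Core}$ inside $F$ --- impossible. Thus $F'$ is $H^{\{1,3\}}_5$-free with $h_{|V'|}(F')\ge 1+\epsilon/2$ and $|V'|\to\infty$, contradicting $\pi(H^{\{1,3\}}_5)=1$ (Theorem \ref{degetheorem}). (2) \emph{Attaching one edge of a new level preserves degeneracy.} If $G'$ is degenerate, $\ell\notin R(G')$ and $G=G'\sqcup e_\ell$, then any $G$-free $R(G)$-graph $F$ on $[n]$ with $h_n(F)\ge|R(G)|-1+\epsilon$ satisfies $h_n(F^\ell)\ge\epsilon$, so $F$ has an $\ell$-edge $e'$; deleting the $\ell$ vertices of $e'$ changes every level-density by only $o(1)$, so $F-e'$ restricted to the levels of $R(G')$ still has Lubell value $\ge\pi(G')+\epsilon/2$ and hence (for $n$ large) contains $G'$, which together with $e'$ gives a copy of $G$ in $F$ --- impossible, so $\pi(G)\le|R(G)|-1$. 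Applying (2) once for each $\ell\in R\setminus\{s,t\}$ turns (1) into $\pi(G_R)=|R|-1$.

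The main obstacle is reduction (1): $\mathrm{Core}$ has to be designed so that (a) projecting $F$ along the random blocks $A^\ast,B^\ast$ genuinely transfers the full Lubell mass $d_s+d_t$ down to the two levels of $F'$ --- this is precisely why $\mathrm{Core}$ is taken to be a lift of $H^{\{1,3\}}_5$ with a common padding block on each level, rather than some ad hoc $\{s,t\}$-graph --- and (b) an embedded $H^{\{1,3\}}_5$ in $F'$ lifts back to an embedded $\mathrm{Core}$ in $F$, which is guaranteed by the blocks being disjoint from the image. Everything else is routine bookkeeping: passing between the limiting Turán density of $H^{\{1,3\}}_5$ (resp.\ of $G'$) and the finite-$n$ statement that every free host on $N$ vertices has Lubell value below $1+\epsilon/2$ once $N$ is large, and controlling the $o(1)$ losses incurred by deleting a bounded number of vertices.
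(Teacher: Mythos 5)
Your proof is correct, but it follows a genuinely different route from the paper. The paper assembles its examples from existing pieces: it covers the two-element sets by taking suspensions of $H^{\{1,3\}}_5$ (types $\{1+k,3+k\}$), suspensions of the known non-trivial degenerate $\{2,3\}$-graph from \cite{JLU} (types $\{k,k+1\}$), and a ``partial suspension'' lemma (Lemma \ref{partialsuspension}) that enlarges only the top level to reach $\{1,t\}$ and then, after suspending, $\{k,k-1+t\}$; larger $R$ are then handled by alternating Lemma \ref{RtoR+1} (disjoint union with a single $1$-edge, requiring $1\notin R$) with suspension. You instead build, for any pair $s<t$ with $t\ge 3$, a single explicit ``padded'' copy of $H^{\{1,3\}}_5$ in which the two levels are enlarged by common blocks $A$ and $B$, prove its degeneracy directly by a random-link averaging argument (project along random $(s-1)$- and $(t-3)$-sets and invoke $\pi(H^{\{1,3\}}_5)=1$ from Theorem \ref{degetheorem}), and then add the remaining levels of $R$ via a disjoint-union lemma for an arbitrary $\ell$-edge, which generalizes the paper's Lemma \ref{RtoR+1} and removes its restriction to $\ell=1$; your averaging step is in effect a simultaneous generalization of the paper's suspension and partial-suspension monotonicity lemmas. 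What this buys is uniformity and self-containedness: one construction covers all pairs at once (including $\{2,3\}$, so no appeal to the example from \cite{JLU}), and no case analysis or iterated alternation of lemmas is needed; the paper's route, in exchange, avoids any new density computation by reusing ready-made lemmas. Two cosmetic points: in the non-triviality argument the conclusion ``$\phi(2)=\phi(3)$'' should be stated for the induced coloring into $C^{\{s,t\}}$ (the embedding into the blow-up only forces the two vertices into the same part, which is what your argument actually uses), and the existence of the pair $s<t$ with $t\ge 3$ is cleanest by taking $t=\max R\ge 3$ and $s=\min R$.
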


We then continue to study the non-degenerate $\{1, 3\}$-graphs. 
Let  $K_3^{\bullet  \bullet}$ be a $\{1, 3\}$-graph with edges $\{\{1\}, \{2\}, \{1,2,3\}\},$
we have
\begin{theorem}\label{1and1.0962}
For any  $K_3^{\bullet  \bullet}$-colorable $\{1, 3\}$-graph $H$, we have
\begin{enumerate}
\item if $K_3^{\bullet  \bullet}\not\subseteq H$,   
$H$ must be $H^{\{1, 3\}}_5$-colarable, then $\pi(H)= 1$; 
\item if $K_3^{\bullet  \bullet}\subseteq H$,
 then $\pi(H)=\pi(K_3^{\bullet  \bullet})= 1+ \frac{\sqrt{3}}{18}$. 
\end{enumerate}
\end{theorem}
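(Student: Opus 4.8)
The plan is to deduce everything from two standard monotonicity properties of $\pi$ in the framework of \cite{JLU}: (i) if $H_1\subseteq H_2$ then $\pi(H_1)\le\pi(H_2)$, because every $H_1$-free $R$-graph is $H_2$-free; and (ii) if there is a hypergraph homomorphism $H\to H^{*}$ then $\pi(H)\le\pi(H^{*})$, because by supersaturation any $R$-graph whose Lubell value strictly exceeds $\pi(H^{*})$ contains arbitrarily large blow-ups of $H^{*}$, and an $H^{*}$-colorable $H$ embeds into a large enough blow-up of $H^{*}$. Throughout we may assume $R(H)=\{1,3\}$, the cases $|R(H)|\le 1$ being immediate from the definitions.

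For part (2): if $K_3^{\bullet\bullet}\subseteq H$ and $H$ is $K_3^{\bullet\bullet}$-colorable, then (i) gives $\pi(K_3^{\bullet\bullet})\le\pi(H)$ and (ii) gives $\pi(H)\le\pi(K_3^{\bullet\bullet})$, hence $\pi(H)=\pi(K_3^{\bullet\bullet})$. It remains to evaluate $\pi(K_3^{\bullet\bullet})$. For the lower bound, on vertex set $[n]$ fix $A$ with $|A|=\lfloor\alpha n\rfloor$ where $\alpha=\tfrac{1}{2}-\tfrac{\sqrt3}{6}$, declare every vertex of $A$ a singleton edge, and take every triple meeting $A$ in at most one vertex; this $R$-graph is $K_3^{\bullet\bullet}$-free and its Lubell value tends to $\alpha+(1-\alpha)^2(1+2\alpha)=1+\tfrac{\sqrt3}{18}$. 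For the upper bound, let $G$ be $K_3^{\bullet\bullet}$-free with singleton set $A$ of size $a$; then no triple of $G$ contains two vertices of $A$, so $|E(G^3)|\le\binom{n-a}{3}+a\binom{n-a}{2}$, whence $h_n(G)\le \tfrac{a}{n}+(1-\tfrac{a}{n})^2(1+2\tfrac{a}{n})+o(1)$, and maximizing $x\mapsto x+(1-x)^2(1+2x)$ over $[0,1]$ gives the maximum $1+\tfrac{\sqrt3}{18}$ at $x=\tfrac{1}{2}-\tfrac{\sqrt3}{6}$.

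For part (1): fix a homomorphism $\phi\colon H\to K_3^{\bullet\bullet}$ and set $V_i=\phi^{-1}(i)$. Since $\{1\}$ and $\{2\}$ are the only singleton edges of $K_3^{\bullet\bullet}$ and $\{1,2,3\}$ its only triple, every singleton vertex of $H$ lies in $V_1\cup V_2$ and every triple of $H$ has exactly one vertex in each of $V_1,V_2,V_3$. Let $S=\{v:\{v\}\in E(H)\}$; the hypothesis $K_3^{\bullet\bullet}\not\subseteq H$ says precisely that no triple of $H$ contains two vertices of $S$. Define $\psi\colon V(H)\to[5]=V(H^{\{1,3\}}_5)$ by $\psi(V_3)=\{1\}$, $\psi(S\cap V_1)=\{2\}$, $\psi(V_1\setminus S)=\{5\}$, $\psi(S\cap V_2)=\{3\}$, $\psi(V_2\setminus S)=\{4\}$. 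Then $\psi$ is a homomorphism $H\to H^{\{1,3\}}_5$: a singleton $\{v\}$ of $H$ has $v\in S\cap(V_1\cup V_2)$, so $\psi(v)\in\{2,3\}$, and $\{2\},\{3\}\in E(H^{\{1,3\}}_5)$; a triple $\{a,b,c\}$ with $a\in V_1$, $b\in V_2$, $c\in V_3$ has $\psi(c)=1$ and, since $a$ and $b$ are not both in $S$, its image is one of $\{1,2,4\}$, $\{1,3,5\}$, $\{1,4,5\}$ (the only excluded image $\{1,2,3\}$ would force $a,b\in S$). Hence $H$ is $H^{\{1,3\}}_5$-colorable, so $H$ is degenerate by Theorem \ref{degetheorem}, and $\pi(H)=|R(H)|-1=1$.

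The main obstacle is part (1): one must produce the five-block partition of $V(H)$ that yields $\psi$, and the insight is that a $K_3^{\bullet\bullet}$-coloring already cuts $V(H)$ into the three classes $V_1,V_2,V_3$ with each triple meeting each class once, so the only freedom left is to split $V_1$ and $V_2$ according to the singleton set $S$; the hypothesis $K_3^{\bullet\bullet}\not\subseteq H$ is used exactly once, to prevent a triple from mapping onto the non-edge $\{1,2,3\}$ of $H^{\{1,3\}}_5$. The two monotonicity properties and the one-variable optimization in part (2) are routine.
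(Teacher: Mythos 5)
Your proposal is correct and follows essentially the same route as the paper: for item (2) you reprove the squeeze argument ($K_3^{\bullet\bullet}\subseteq H\subseteq$ a blow-up of $K_3^{\bullet\bullet}$) and evaluate $\pi(K_3^{\bullet\bullet})$ with exactly the construction $G_A$ and the counting bound that the paper phrases via the Lagrangian $\lambda(H_A)$, and for item (1) your five-block refinement of the $K_3^{\bullet\bullet}$-coloring (splitting $V_1$ and $V_2$ by the singleton set $S$, with $K_3^{\bullet\bullet}\not\subseteq H$ ruling out the image $\{1,2,3\}$) is the same homomorphism to $H_5^{\{1,3\}}$ the paper writes down, after which both arguments conclude $\pi(H)=1$ from the already-established fact that $H_5^{\{1,3\}}$-colorable graphs are degenerate.
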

A result following Theorem \ref{1and1.0962} indicates a break for the Tur\'an density of  $\{1, 3\}$-graphs: 
\begin{corollary}\label{1and1.0218}
Let $\alpha$ be a real value in $[1, \frac{4}{9}+\frac{\sqrt{3}}{3})$. For any $\{1, 3\}$-graph $H$ with $\pi(H)\leq\alpha$, it must be the case that $\pi(H)=1$.
\end{corollary}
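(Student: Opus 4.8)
The plan is to establish the following dichotomy, from which the corollary is immediate: every $\{1,3\}$-graph $H$ satisfies either $\pi(H)=1$ or $\pi(H)\ge \tfrac49+\tfrac{\sqrt3}{3}$. I would split according to whether $H$ is $K_3^{\bullet\bullet}$-colorable. If it is, Theorem~\ref{1and1.0962} resolves everything: when $K_3^{\bullet\bullet}\not\subseteq H$ it gives $\pi(H)=1$, and when $K_3^{\bullet\bullet}\subseteq H$ it gives $\pi(H)=1+\tfrac{\sqrt3}{18}$, which falls in the second part of the dichotomy since $1+\tfrac{\sqrt3}{18}-\bigl(\tfrac49+\tfrac{\sqrt3}{3}\bigr)=\tfrac{5(2-\sqrt3)}{18}>0$. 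So the real work is the case in which $H$ is \emph{not} $K_3^{\bullet\bullet}$-colorable, where the target becomes the lower bound $\pi(H)\ge \tfrac49+\tfrac{\sqrt3}{3}$.

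For this I would produce a single family of constructions that is simultaneously $H$-free for \emph{every} non-$K_3^{\bullet\bullet}$-colorable $H$. For $\sigma\in(0,1)$ and $n$ large, let $G_n(\sigma)$ be the $\{1,3\}$-graph on $[n]=T_1\sqcup T_2\sqcup B$ with $|T_1|=|T_2|=\lfloor\sigma n/2\rfloor$ and $|B|=n-|T_1|-|T_2|$, whose edges are all singletons $\{v\}$ with $v\in T_1\cup T_2$ together with all triples $\{t_1,t_2,z\}$ with $t_1\in T_1$, $t_2\in T_2$, $z\in B$ — a blow-up of one transversal triple, with two of its three parts carrying all the singletons. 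I claim no such $G_n(\sigma)$ contains a copy of a non-$K_3^{\bullet\bullet}$-colorable $H$: given an embedding $f\colon V(H)\to V(G_n)$, color $v\in V(H)$ by $1,2,3$ according to whether $f(v)\in T_1,T_2,B$; since every singleton edge of $G_n$ lies in $T_1\cup T_2$ and every triple edge of $G_n$ is a transversal of $(T_1,T_2,B)$, this map sends every singleton-vertex of $H$ into $\{1,2\}$ and is rainbow on every triple of $H$, i.e.\ it is a $K_3^{\bullet\bullet}$-coloring of $H$ — a contradiction. (The converse holds too, so these graphs are in fact extremal for forbidding all non-$K_3^{\bullet\bullet}$-colorable graphs, which is why one expects the bound to be sharp; only the stated direction is needed.)

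Finally I would optimize $\sigma$. Counting edges, $h_n(G_n(\sigma))=\tfrac{|T_1|+|T_2|}{n}+\tfrac{|T_1||T_2||B|}{\binom n3}\to \sigma+\tfrac32\sigma^2(1-\sigma)$ as $n\to\infty$ (and for fixed $\sigma$ the balanced split $|T_1|=|T_2|$ is optimal by AM–GM). The function $d(\sigma)=\sigma+\tfrac32\sigma^2(1-\sigma)$ has $d'(\sigma)=1+3\sigma-\tfrac92\sigma^2$, which vanishes at $\sigma^\ast=\tfrac{1+\sqrt3}{3}$; using $(\sigma^\ast)^2(1-\sigma^\ast)=\tfrac{2}{27}$ one gets $d(\sigma^\ast)=\tfrac{1+\sqrt3}{3}+\tfrac19=\tfrac{4+3\sqrt3}{9}=\tfrac49+\tfrac{\sqrt3}{3}$. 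Since $G_n(\sigma^\ast)$ is $H$-free for all large $n$, this forces $\pi(H)\ge\tfrac49+\tfrac{\sqrt3}{3}$, completing the dichotomy and hence the corollary. The one genuinely creative step is spotting this construction — that a doubly-singletoned blow-up of a single triple has the property that its subgraphs are exactly the $K_3^{\bullet\bullet}$-colorable $\{1,3\}$-graphs; once it is in hand, the remaining steps (the case split via Theorem~\ref{1and1.0962}, the elementary inequality $1+\tfrac{\sqrt3}{18}>\tfrac49+\tfrac{\sqrt3}{3}$, and the one-variable optimization) are routine.
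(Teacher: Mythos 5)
Your proof is correct and is essentially the paper's argument: the paper deduces from $\pi(H)<\lambda(H_B)=\frac{4}{9}+\frac{\sqrt{3}}{3}$ that $H$ must be $H_B$-colorable (the contrapositive of exactly your blow-up/$H$-freeness step, since $H_B=\{a,b,abc\}$ is the same hypergraph as $K_3^{\bullet\bullet}$) and then applies Theorem~\ref{1and1.0962} and the inequality $1+\frac{\sqrt{3}}{18}>\frac{4}{9}+\frac{\sqrt{3}}{3}$, just as you do. The construction you describe and optimize is precisely the paper's Construction~B ($G_B$, with Lagrangian $\lambda(H_B)=\frac{4}{9}+\frac{\sqrt{3}}{3}$), so you have re-derived it rather than found a different route.
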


We also obtain the Tur\'an densities of some $3$-partite $\{1, 3\}$-graphs, the results are shown in Section 4. 

The paper is organized as follows. In section 2 we introduce some notations and lemmas for 
non-uniform hypergraphs.  In section 3 we will prove the Theorem \ref{degetheorem}, Theorem \ref{1and1.0962} and Corollary \ref{1and1.0218}.
In section 4, we determine the Tur\'an densities of some $3$-partite $\{1, 3\}$-graphs. In section 5, we prove Theorem \ref{anyRnontrivial}.

\section{Notation and lemmas}
In this section, we introduce some notations and lemmas for $R$-graphs 
and then for the $\{1, 3\}$-graphs.
We call an edge of cardinality $i$ as an $i$-edge, for each $i\in R$. 
For convenience, we call a vertex that forms a $1$-edge as ``black vertex", otherwise, ``white vertex". 
We use notations of form $H_{n}^{\bullet}$ to represent a hypergraph on $n$ vertices that contains only one ``black vertex", similarly, $H_{n}^{\bullet \bullet}$ represents a hypergraph on $n$ 
vertices that contains two ``black vertices", and so on.  
To simplify our notations for  $\{1,3\}$-graphs, we use form of $abc$ to denote the edge $\{a, b, c\}$.

For a fixed set $R=\{k_1, k_2, \ldots, k_r\}$, 
with $(k_1< k_2<\ldots< k_r)$,  {\it R-flag} is an $R$-graph containing exactly 
one edge of each size. The {\it chain $C^R$} is the special R-flag with the edge set 
$E(C^R)=\{[k_1], [k_2], \ldots, [k_r]\}, $
where $[k_i]$ is the set of all positive integers from $1$ to $k_i$ for each $i\in [r]$. 
For $R=\{1, 3\}$, the chain $C^{\{1, 3\}}=\{1, 123\}$.
For any $R$-flag $L$, 
 we have $\pi(L) = |R|-1$ (see \cite{JLU}). Thus the  chain $C^{\{1, 3\}}$ is a degenerate $\{1,3\}$-graph.


The following definitions and lemmas on non-uniform hypergraphs are generalized from uniform hypergraphs.
\begin{defn}[Blow-up hypergraphs]\cite{JLU}
For any hypergraph $H$ on $n$ vertices and positive integers $s_1, s_2, \ldots, s_n $, the {\it blow-up} of 
H is a new hypergraph $(V, E)$, denoted by $H_n(s_1, s_2, \ldots, s_n)$, satisfying 
\begin{itemize}
\item $ V:= \bigsqcup^n_{i=1}V_i$, where $|V_i|=s_i$,
\item $E:=\bigcup_{F\in E(H)} \prod_{i\in F}V_i$.
\end{itemize}
When $s_1=s_2=\ldots=s_n=s$, we simply write it as $H(s)$.
\end{defn}
The blow-up operation does not change
the Tur\'an density.
\begin{theorem}[Blow-up Families]\cite{JLU} \label{Blowupinvariant}
Let $\mathcal{H}$ be a finite family of hypergraphs and let $s\geq 2$. Then $\pi(\mathcal{H}(s))=\pi(\mathcal{H})$. 
\end{theorem}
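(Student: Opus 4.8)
\textit{Proof strategy.} The plan is to prove the two inequalities $\pi(\mathcal H(s))\ge\pi(\mathcal H)$ and $\pi(\mathcal H(s))\le\pi(\mathcal H)$ separately, where $\mathcal H(s):=\{H(s):H\in\mathcal H\}$. The first is immediate: for every $H$ and every $s\ge 1$ one has $H\subseteq H(s)$ (take one vertex from each part of the blow-up), so any $R$-graph that contains a member of $\mathcal H(s)$ contains the corresponding member of $\mathcal H$; equivalently, every $\mathcal H$-free graph is $\mathcal H(s)$-free, hence $\pi_n(\mathcal H(s))\ge\pi_n(\mathcal H)$ for all $n$, and passing to the limit gives $\pi(\mathcal H(s))\ge\pi(\mathcal H)$.

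For the reverse inequality I would combine a supersaturation step with a K\H{o}v\'ari--S\'os--Tur\'an-type box extraction. The first ingredient is an averaging identity for the Lubell function: for a fixed $m\le n$ and a uniformly random $m$-subset $S$ of $V(G)$, $\mathbb{E}_S\!\left[h_m(G[S])\right]=h_n(G)$; this follows by applying $\binom{n-|e|}{m-|e|}\big/\binom{n}{m}=\binom{m}{|e|}\big/\binom{n}{|e|}$ to each edge-term of the Lubell sum. Now suppose, for contradiction, that $\pi(\mathcal H(s))>\pi(\mathcal H)$, and set $\epsilon:=\tfrac12\bigl(\pi(\mathcal H(s))-\pi(\mathcal H)\bigr)>0$. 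Then for all large $n$ there is an $\mathcal H(s)$-free $R$-graph $G$ on $n$ vertices with $h_n(G)>\pi(\mathcal H)+\epsilon$. Fix $m$ with $\pi_m(\mathcal H)<\pi(\mathcal H)+\epsilon/2$. Since $h_m(G[S])\le|R|$ always and its average over $m$-subsets exceeds $\pi(\mathcal H)+\epsilon$, a fraction of the $m$-subsets bounded below by a positive constant (depending only on $\epsilon$ and $|R|$) satisfy $h_m(G[S])>\pi(\mathcal H)+\epsilon/2>\pi_m(\mathcal H)$, and each such $S$ therefore spans a copy of some member of $\mathcal H$. Double counting the incidences between these ``heavy'' $m$-subsets and the copies of members of $\mathcal H$ they contain, and using the pigeonhole over the finitely many members of $\mathcal H$, yields a single $H\in\mathcal H$, on $v:=|V(H)|$ vertices, that occurs as a subgraph of $G$ at least $\delta n^{v}$ times for some constant $\delta>0$.

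It then remains to turn $\delta n^{v}$ copies of $H$ into a copy of $H(s)$. Fix an enumeration $x_1,\dots,x_v$ of $V(H)$, record each copy as the ordered tuple $(a_1,\dots,a_v)$ of images (the entries are distinct, since a subgraph embedding is injective), and let $\mathcal F\subseteq V(G)^{v}$ be the resulting family, $|\mathcal F|\ge\delta n^{v}$. Color $V(G)$ with $v$ colors uniformly at random and delete every tuple of $\mathcal F$ whose $i$-th entry does not have color $i$; in expectation at least $\delta n^{v}/v^{v}$ tuples survive, so we may assume that $\mathcal F$ lies inside a product $A_1\times\cdots\times A_v$ of pairwise disjoint vertex sets and still has size at least $\delta' n^{v}$. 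Next, extract an \emph{aligned box} $T_1\times\cdots\times T_v\subseteq\mathcal F$ with each $|T_i|=s$ by induction on $v$: for $v=1$ this is immediate since $|\mathcal F|\ge\delta' n\ge s$; for $v\ge 2$, to each $s$-subset $T\subseteq A_1$ associate its link $\mathcal F_T:=\{(a_2,\dots,a_v)\in A_2\times\cdots\times A_v:(t,a_2,\dots,a_v)\in\mathcal F\text{ for all }t\in T\}$, note that $\sum_{T}|\mathcal F_T|=\sum_{(a_2,\dots,a_v)}\binom{d(a_2,\dots,a_v)}{s}$ where $d(a_2,\dots,a_v)$ is the number of $a_1$ with $(a_1,\dots,a_v)\in\mathcal F$, apply convexity (Jensen) to conclude that some $T_1$ has $|\mathcal F_{T_1}|\ge\delta'' n^{v-1}$, and recurse on $\mathcal F_{T_1}\subseteq A_2\times\cdots\times A_v$ to obtain $T_2,\dots,T_v$; by construction $T_1\times\cdots\times T_v\subseteq\mathcal F$. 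Finally, since every tuple in $T_1\times\cdots\times T_v$ is a copy of $H$, for each edge $\{x_{i_1},\dots,x_{i_k}\}\in E(H)$ and every choice of one vertex from each of $T_{i_1},\dots,T_{i_k}$, the corresponding $k$-set is an edge of $G$; hence the disjoint parts $T_1,\dots,T_v$ realize $H(s)\subseteq G$, contradicting $\mathcal H(s)$-freeness. Therefore $\pi(\mathcal H(s))\le\pi(\mathcal H)$, and with the first paragraph $\pi(\mathcal H(s))=\pi(\mathcal H)$.

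The main obstacle is this final extraction step: squeezing the complete blow-up $H(s)$ out of only $\Theta(n^{v})$ copies of $H$. The supersaturation part is routine once the Lubell-averaging identity is observed. The delicate points in the extraction are (i) the preliminary random $v$-coloring, which is what turns $\mathcal F$ into a genuinely $v$-partite family so that the codegree/convexity count applies and the parts $T_i$ automatically come out pairwise disjoint, and (ii) checking that an aligned combinatorial box inside $\mathcal F$ really reproduces \emph{all} edges of $H(s)$ in $G$, rather than merely a complete $v$-partite configuration on $T_1\cup\cdots\cup T_v$.
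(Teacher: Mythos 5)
Your argument is correct: the easy direction via $H\subseteq H(s)$, and the hard direction via Lubell-function averaging over $m$-subsets (supersaturation) followed by the ordered-tuple/random-colouring/Jensen extraction of an aligned box, all check out, including the two delicate points you flag. The paper itself does not reprove this theorem but cites Johnston--Lu, where the proof runs along essentially this same classical supersaturation-plus-box-extraction route, so your proposal matches the source's approach rather than offering a genuinely different one.
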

A direct corollary of Theorem \ref{Blowupinvariant} is the following result. 
\begin{theorem}[Squeeze Theorem]\label{squeezee}\cite{JLU}
Let $H$ be any hypergraph.
If there exists a hypergraph $H'$ and an integer $s\geq 2$ 
such that $H'\subseteq H \subseteq H'(s)$, then $\pi(H)=\pi(H')$. 
\end{theorem}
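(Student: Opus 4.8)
\textbf{Proof plan for Theorem \ref{squeezee} (Squeeze Theorem).}

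The plan is to derive this purely as a corollary of Theorem \ref{Blowupinvariant} together with the monotonicity of Tur\'an density under the sub-graph relation. First I would record the following elementary monotonicity fact: if $H_1 \subseteq H_2$, then any $H_1$-free hypergraph is automatically $H_2$-free (an embedding of $H_2$ would restrict to an embedding of $H_1$), hence $\pi_n(H_2) \le \pi_n(H_1)$ for every $n$, and therefore $\pi(H_2) \le \pi(H_1)$ by taking the limit. This is the only structural input besides the blow-up invariance, and it follows immediately from the definition of $\subseteq$ given in the excerpt (a $1$-$1$ map $f$ with $f(e)\in E(H)$ for all edges $e$).

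Next I would apply this monotonicity twice to the given chain $H' \subseteq H \subseteq H'(s)$. The left inclusion $H' \subseteq H$ gives $\pi(H) \le \pi(H')$. The right inclusion $H \subseteq H'(s)$ gives $\pi(H'(s)) \le \pi(H)$. Now invoke Theorem \ref{Blowupinvariant} with the singleton family $\mathcal{H} = \{H'\}$ and the integer $s \ge 2$: it yields $\pi(H'(s)) = \pi(H')$. Chaining these together, $\pi(H') = \pi(H'(s)) \le \pi(H) \le \pi(H')$, so all inequalities are equalities and $\pi(H) = \pi(H')$, as claimed.

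I do not anticipate a genuine obstacle here, since the result is a formal consequence of two already-stated theorems; the only point requiring care is making sure the monotonicity direction is stated correctly (a \emph{larger} forbidden graph is \emph{easier} to avoid, so it has \emph{smaller} Tur\'an density), and confirming that $\pi_n$ is monotone for each finite $n$ before passing to the limit. If one wanted to be fully self-contained one could alternatively note that $\pi(\cdot)$ is well-defined as a limit (cited from \cite{JLU}) so that the pointwise inequalities $\pi_n(H_2)\le \pi_n(H_1)$ pass to the limit without issue. Everything else is a two-line sandwich argument.
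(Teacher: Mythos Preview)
Your overall strategy---monotonicity of $\pi$ under $\subseteq$ combined with Theorem \ref{Blowupinvariant}---is exactly what the paper has in mind (it simply calls Theorem \ref{squeezee} ``a direct corollary of Theorem \ref{Blowupinvariant}'' and gives no further argument). So the plan is right.

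However, you have the monotonicity inequality reversed. From ``$H_1\subseteq H_2$ implies every $H_1$-free graph is $H_2$-free'' you should conclude that the class of $H_2$-free graphs is \emph{larger}, hence $\pi_n(H_1)\le \pi_n(H_2)$, not the other way around. (Sanity check: $K_3\subseteq K_4$ and $\pi(K_3)=\tfrac12<\tfrac23=\pi(K_4)$.) Your parenthetical ``a larger forbidden graph is easier to avoid, so it has smaller Tur\'an density'' is likewise backwards---easier to avoid means more $H$-free graphs, hence \emph{larger} $\pi$. With the corrected direction the chain becomes
\[
\pi(H')\ \le\ \pi(H)\ \le\ \pi(H'(s))\ =\ \pi(H'),
\]
which still squeezes to $\pi(H)=\pi(H')$. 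So the error is harmless for the final conclusion, but the two intermediate inequalities you wrote are the wrong way round and the justification paragraph should be fixed.
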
  

It is easy to generalize the concepts of homomorphisms and $H$-coloring to general
$R$-graphs. 
\begin{defn}\label{homodef}
Given two $R$-graphs $G$ and $H$, a hypergraph {\em homomorphism} is a vertex map $f: V(G) \to V(H)$ such that, if  $\{v_1, \ldots, v_r\}\in E(G)$  then $\{f(v_1), \ldots, f(v_r)\}\in E(H),$ for all $r \in R$. 
\end{defn}

\begin{defn}\label{homomorphism}
A hypergraph $G$ is called {\em $H$-colorable} if and only if there exists a homomorphism
from $G$ to $H$.
\end{defn}
Note that, if there exists a homomorphism from $G$ to $H$, then $G$ is isomorphic to a sub-graph of a blow-up of $H$. Thus we have:
\begin{lemma}
  If $G$ is $H$-colorable, then $\pi(G)\leq \pi(H)$.
\end{lemma}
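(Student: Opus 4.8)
The plan is to combine the observation recorded immediately before the statement — that an $H$-colouring of $G$ exhibits $G$ as a sub-graph of a blow-up of $H$ — with the blow-up invariance of the Tur\'an density (Theorem~\ref{Blowupinvariant}). Concretely, suppose $f\colon V(G)\to V(H)$ is a homomorphism and set $s:=|V(G)|$. Form the uniform blow-up $H(s)$, writing $V_v$ for the part of size $s$ replacing $v\in V(H)$. For each $v\in V(H)$ the fibre $f^{-1}(v)$ has at most $s$ elements, so one may choose an injection $g\colon V(G)\to\bigsqcup_{v\in V(H)}V_v$ with $g(u)\in V_{f(u)}$ for every $u$. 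I would then check that $g$ embeds $G$ into $H(s)$: if $e=\{u_1,\dots,u_r\}\in E(G)$ then $\{f(u_1),\dots,f(u_r)\}\in E(H)$, and since this is a genuine $r$-edge of the simple hypergraph $H$ the values $f(u_1),\dots,f(u_r)$ are pairwise distinct; hence $g(u_1),\dots,g(u_r)$ lie in distinct parts $V_{f(u_i)}$, so $\{g(u_1),\dots,g(u_r)\}\in\prod_{i\in[r]}V_{f(u_i)}\subseteq E(H(s))$. Thus $G\subseteq H(s)$.

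Next I would record the elementary monotonicity of the Tur\'an density under sub-graphs: if $G\subseteq G'$ then every $G$-free $R$-graph is also $G'$-free, so $\pi_n(G)\le\pi_n(G')$ for all $n$ and therefore $\pi(G)\le\pi(G')$. Applying this with $G'=H(s)$ gives $\pi(G)\le\pi(H(s))$, and Theorem~\ref{Blowupinvariant} yields $\pi(H(s))=\pi(H)$. Chaining these gives $\pi(G)\le\pi(H)$, as claimed.

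There is essentially no serious obstacle here: the lemma is a formal consequence of the preceding remark together with the blow-up theorem. The only points needing a little care are (i) choosing $s\geq |V(G)|$ so that the fibres of $f$ fit inside the parts of the blow-up, which is what makes the embedding $g$ injective; and (ii) observing that, because $H$ is simple and its $r$-edges have exactly $r$ vertices, the images under $f$ of the vertices of any edge of $G$ are automatically distinct, so the corresponding blown-up vertices land in distinct parts and hence do form an edge of $H(s)$.
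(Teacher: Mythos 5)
Your argument is correct and is exactly the route the paper intends: the paper offers no formal proof, only the remark that a homomorphism exhibits $G$ as a sub-graph of a blow-up of $H$, after which sub-graph monotonicity of $\pi$ and Theorem~\ref{Blowupinvariant} give the inequality. Your write-up simply makes explicit the details (choice of $s=|V(G)|$, injectivity of the embedding, distinctness of the images of the vertices of an edge) that the paper leaves implicit.
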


\subsection{$R$-graphs with loops, blow-up, and Lagrangian}
A loop edge is a multiset of vertices. Sometimes we need to enlarge the concept
of $R$-graphs to {\em $R$-graphs with loops}. For example, consider a $\{1,3\}$-graph $H_1$
with the edge set $\{x, xyy, yyy\}$. Here $xyy$ is a loop edge with vertex $x$ occurring once
and vertex $y$ twice. In general, a loop edge $e=x_1^{m_1}\cdots x_l^{m_l}$
consists of $m_1$ copies of vertex $x_1$, $m_2$ copies of vertex $x_2$, and so on.
For a loop edge $e=x_1^{m_1}\cdots x_l^{m_l}$, the {\em cardinality} of $e$
is $|e|=\sum_{i}m_i$. We also define a multinomial coefficient $c_e$ to be
$$c_e:= {|e|\choose m_1,m_2,\ldots, m_l } =
\frac{|e|!}{m_1!m_2!\cdots m_l!}.$$

\begin{defn}
 The {\it polynomial form} of an $R$-graph $H$ with loops  on $n$ vertices,  denoted by $\lambda(H, \vv{x})$
 with $\vv{x}=(x_1, x_2, \ldots, x_n)$ is defined as 
$$\lambda(H, \vv{x}):=\sum\limits_{e\in E(H)} c_e\prod\limits_{i\in e}x_i. $$
The {\it Lagrangian}  of $H$, denoted by $\lambda(H)$,  is the maximum value of the polynomial  $\lambda(H, \vv{x})$ over the simplex $S_n = \{(x_1, x_2, \ldots, x_n)\in [0, 1]^n : \sum_{i=1}^n x_i=1\}$. 
\end{defn}

For any $R$-graph $H$ (with possible loops), one can construct the family of
$H$-colorable $R$-graph by blowing up $H$ in a certain way. 
The Lagrangian of $H$ is the maximum edge density of the $H$-colorable $R$-graphs
that one can get in this way. This definition of Lagrangian is the same as the one in \cite{JLUjump};
but differs from the classical Lagrangian for $r$-uniform hypergraphs such as in \cite{PHTZ}
by a constant multiplicative factor. This is not essential. This is a special case of
more general Lagrangian of non-uniform hypergraphs introduced by Peng-Wu-Yao \cite{PWY}.



\textbf{Construction A:}\label{GA}
Consider a $\{1,3\}$-graph (with loops) $H_A$ on two vertices $\{x,y\}$ with edges $\{x, xyy, yyy\}$.
The polynomial form of $H_A$ is
$$\lambda(H_A, \vv{x})=x_1+3x_1x_2^2+x_2^3.$$
It can be shown that $\lambda(H, \vv{x})$ reaches the maximum $1+\frac{\sqrt{3}}{18}$
over the simplex $S_2=\{(x_1,x_2) \in [0, 1]^2, x_1+x_2=1\}$ at $x_1=\frac{1}{2}-\frac{\sqrt{3}}{6}$. Thus, we have
$$\lambda(H_A)= 1+\frac{\sqrt{3}}{18}\approx 1.096225.$$

A $\{1, 3\}$-graph $G_A$ on $n$ vertices is generated by blowing-up $H_A$ as follows:
set a vertex partition $V(G_A)=X\cup Y$ with $|X|\approx (\frac{1}{2}-\frac{\sqrt{3}}{6})n$
such that all
$1$-edges are in $X$ (drawn by a black point), 
and all $3$-edges are either formed by three vertices in $Y$ or by one vertex in $X$ plus two vertices in $Y$. In another words, 
$$E(G_A)=\binom{X}{1}\cup\binom{X}{1}\times \binom{Y}{2} \cup\binom{Y}{3}.$$
We have
\begin{align*}
  h_n(G_A) &= \frac{|X|}{n}+\frac{|X|{|Y|\choose 2}+ {|Y|\choose 3}}{{n\choose 3}}\\
           &=\lambda(H_A, \vv{x})+O(\frac{1}{n}).\\
           &=\lambda(H_A)+O(\frac{1}{n}).
\end{align*}
Here $\vv{x}=(\frac{|X|}{n}, \frac{|Y|}{n})=(\frac{1}{2}-\frac{\sqrt{3}}{6},\frac{1}{2}+\frac{\sqrt{3}}{6}).$
\begin{center}
  \begin{tikzpicture}[scale=0.4, vertex/.style={circle, draw=black, fill=white} ]
\draw (0,0) ellipse (2 and 3);
\draw (6,0) ellipse (2 and 3);
\shadedraw[left color=black!50!white, right color=black!10!white, draw=black!10!white] (0,0)--(5,-1)--(5,1)--cycle;
\shadedraw[left color=black!50!white, right color=black!10!white, draw=black!10!white] (6.5,2)--(5.5,-1)--(7.5,-1)--cycle;
   \node at (0,0) [vertex, scale=0.5] (v1) [fill=black] {};
\node at (0,-2) {$X$};
\node at (6,-2) {$Y$};
\node at (3, -5) {$G_A$: ${\max}\ h_n(G_A)=1+ \frac{\sqrt{3}}{18}+o_n(1)$, reached at $|X|=(\frac{1}{2}-\frac{\sqrt{3}}{6})n$.};
  \end{tikzpicture}
  \end{center}

\textbf{Construction B:}\label{GB}
Let $H_B$ be a general  $\{1,3\}$-graph on three vertices $\{a,b,c\}$ with the edge set
$\{a, b, abc\}$. We have
$$\lambda(H_B, \vv{x})=x_1+x_2+6x_1x_2x_3.$$
It is easy to check
$$\lambda(H_B)=\frac{4}{9}+\frac{\sqrt{3}}{3}\approx 1.021794714,$$
which is reached at
$x_1=x_2=\frac{1+\sqrt{3}}{6}$, and $x_3=\frac{2-\sqrt{3}}{3}$.
A $\{1, 3\}$-graph $G_B$ on $n$ vertices is generated by blowing-up $H_B$ as follows:
set a vertex partition $V(G_B)=A \cup B \cup C$.  All $1$-edges are in $A$ and $B$(drawn by black points), 
all $3$-edges are formed by exactly one vertex in each partition. 
We have
$$E(G_B)=\binom{A}{1}\cup\binom{B}{1}\cup \binom{A}{1} \times \binom{B}{1} \times \binom{C}{1}.$$
Note that $G_B$ is $H_B$-colorable.   Thus $\ h_n(G_B)=\lambda(H_B)+O(\frac{1}{n})$.

\begin{center}
\begin{tikzpicture}[scale=0.4, vertex/.style={circle, draw=black, fill=white} ]
    \draw (0,0) ellipse (2 and 3);
    \draw (-6,1.7) ellipse (2 and 1.5);
    \draw (-6,-1.7) ellipse (2 and 1.5);
\shadedraw[left color=black!50!white, right color=black!10!white, draw=black!10!white]
 (0,0)--(-6,1.7)--(-6,-1.7)--cycle;
 \node at (-6,1.5) [vertex, scale=0.5] (v2) [fill=black] {};   
\node at (-6,-1.5) [vertex, scale=0.5] (v1) [fill=black] {};
\node at (0,-2) {$C$};
\node at (-7,-2) {$B$};
\node at (-7,2) {$A$};
\node at (-3.5, -5) {$G_B$: ${\max}\ h_n(G_B)=\frac{4}{9}+\frac{\sqrt{3}}{3} + o_n(1)$, reached at   $|A|=|B|=\left(\frac{1+\sqrt{3}}{6}\right)n$. };
\end{tikzpicture}
\end{center}

%

\subsection{Product of two $R$-graphs}
Let's  define the product of $R$-graphs (with loops):
\begin{defn}
For any two general $R$-graphs $H_1$ and $H_2$ with vertices set
$V_1$ and $V_2$ respectively, we define the product of $H_1$ and $H_2$, which is denoted by
$H_1\times H_2=(V,\ E)$, 
where $$V=V_1\times V_2, \ E=\cup_{r\in R} E(H_1^r)\times E(H_2^r),$$  
the $E(H_i^r)$ denotes the set of all edges of cardinality $r$ in $H_i$ for $i=1, 2$. 
Here $E(H_1^r)\times E(H_2^r)$ consists of all products of $e\times_{\sigma}f$, where $\sigma=(\sigma(1), \ldots,\sigma(r))$ takes over all permutations of $[r]$.
For example,
given $e=\{v_1, \ldots, v_r\}\in E(H_1)$, $f=\{u_1, \ldots, u_r\}\in E(H_2),$ then
$e\times _{\sigma}f=\{(v_1, u_{\sigma(1)}), (v_2, u_{\sigma(2)}), \ldots, (v_r, u_{\sigma(r)})\}$
is an edge in $E(H_1^r)\times E(H_2^r)$. 
\end{defn}
\begin{eg}\label{formdege}
The product of two $\{1, 3\}$-graphs
$H_A$ and $H_B$ is given below. Let $ax$ stand for $(a,x)$, similar for other labels, 
then the vertex set is 
$V(H_A\times H_B)=\{ax, ay, bx, by, cx, cy\}$
and the edge set  is
$$E(H_A\times H_B)=\{\{ax\}, \{bx\}, \{cy, bx, ay\}, \{cy, ay, by\}, \{cy, by, ax\}, \{cx, ay, by\}\}.$$
\begin{center}
  \begin{tikzpicture}[scale=0.4, vertex/.style={circle, draw=black, fill=white} ]
\shadedraw[left color=black!50!white, right color=black!10!white, draw=black!10!white] (0,0)--(3, 0)--(1,5)--cycle;
\shadedraw[left color=black!50!white, right color=black!10!white, draw=black!10!white] (1,5)--(5, 0)--(7,0)--cycle;\shadedraw[left color=black!50!white, right color=black!10!white, draw=black!10!white] (1,5)--(5, 0)--(3,0)--cycle;
\shadedraw[left color=black!50!white, right color=black!10!white, draw=black!10!white] (1,5)--(5, 0)--(7,0)--cycle;
\shadedraw[left color=green!50!white, right color=green!10!white, draw=green!10!white, opacity=0.3] (3,5)--(3, 0)--(5,0)--cycle;

\node at (0,0) [vertex, scale=0.5] (v1) [fill=black, label=below:{bx}] {};   
\node at (3,0) [vertex, scale=0.5] (v2) [label=below:ay] {};
\node at (5,0) [vertex, scale=0.5, scale=0.5, scale=0.5, scale=0.5, scale=0.5, scale=0.5, scale=0.5] (v3)  [label=below:by]{};   
\node at (7,0) [vertex, scale=0.5] (v4) [fill=black,  label=below:{ax}] {};
\node at (1,5) [vertex, scale=0.5] (v5)  [label=above:cy]{};  
\node at (3,5) [vertex, scale=0.5] (v5) [label=above:cx] {};  
\node at (4,-2) {$H_A\times H_B$};
\end{tikzpicture}	
\end{center}
\end{eg}

\begin{lemma}\label{productcolorable}
For any two $R$-graphs $H_1$ and $H_2$, if hypergraph $H$ is $H_1$ and $H_2$-colorable, then it's $(H_1\times H_2)$-colorable. 
\end{lemma}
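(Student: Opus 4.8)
The plan is to build the required homomorphism $\varphi\colon V(H)\to V(H_1\times H_2)$ directly from the two given homomorphisms by pairing them coordinatewise. Since $H$ is $H_1$-colorable there is a homomorphism $\varphi_1\colon V(H)\to V(H_1)$, and since $H$ is $H_2$-colorable there is a homomorphism $\varphi_2\colon V(H)\to V(H_2)$. Recalling that $V(H_1\times H_2)=V(H_1)\times V(H_2)$, I would define $\varphi(v):=(\varphi_1(v),\varphi_2(v))$ for every $v\in V(H)$.

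It then remains to check that $\varphi$ respects edges. Fix an edge $e=\{v_1,\dots,v_r\}\in E(H)$ with $r\in R$. Because $\varphi_1$ and $\varphi_2$ are homomorphisms, $e_1:=\{\varphi_1(v_1),\dots,\varphi_1(v_r)\}\in E(H_1^r)$ and $e_2:=\{\varphi_2(v_1),\dots,\varphi_2(v_r)\}\in E(H_2^r)$. By the definition of the product, $E(H_1\times H_2)$ contains every $e_1\times_\sigma e_2$ as $\sigma$ ranges over the permutations of $[r]$; taking $\sigma$ to be the identity permutation with respect to the listed orderings of $e_1$ and $e_2$ gives
$$e_1\times_{\mathrm{id}}e_2=\{(\varphi_1(v_1),\varphi_2(v_1)),\dots,(\varphi_1(v_r),\varphi_2(v_r))\}=\varphi(e).$$
Hence $\varphi(e)\in E(H_1\times H_2)$, so $\varphi$ is a homomorphism and $H$ is $(H_1\times H_2)$-colorable.

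The construction is essentially a single line, so there is no serious obstacle; the one point deserving a moment's care is that the permutation parameter $\sigma$ in the product is exactly what absorbs the arbitrariness in how the vertices of a common edge are ordered (and, when loop edges are allowed, how repeated vertices are matched), which is why the identity permutation relative to one fixed labeling of $e$ always suffices. This also fits the intuition behind the preceding lemma: $H_1$- and $H_2$-colorability say $H$ embeds into a blow-up of each of $H_1$ and $H_2$, and the product $H_1\times H_2$ is designed precisely so that its blow-ups are the graphs that are simultaneously blow-up-embeddable into $H_1$ and into $H_2$.
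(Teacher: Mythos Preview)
Your proof is correct and follows essentially the same approach as the paper: define $\varphi(v)=(\varphi_1(v),\varphi_2(v))$ from the two given homomorphisms and observe that for any edge $e$ the image $\varphi(e)$ lies in $\varphi_1(e)\times\varphi_2(e)\subseteq E(H_1\times H_2)$. The paper's version is slightly terser (it does not explicitly single out the identity permutation), but the argument is the same.
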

\begin{proof}
By definition, there exist two graph homomorphisms 
$f_1: V(H)\mapsto V(H_1)$ and $f_2: V(H)\mapsto V(H_2)$. Note that $H$ could be an $R$-graph. 
Then for any $r\in R$, 
if  edge $e=\{v_1, \ldots, v_r\}\in E(H)$, 
we have  $$f_1(e)=\{f_1(v_1), \ldots, f_1(v_r)\}\in E(H_1)$$ and 
$$f_2(e)=\{f_2(v_1), \ldots, f_2(v_r)\}\in E(H_2).$$ 
Define a map $f:=f_1\times f_2$ from $V(H)$ to $V(H_1)\times V(H_2)$, 
such that $f(v)=(f_1(v), f_2(v))$. 
Then we have 
$$f(e)
=\{(f_1(v_1), f_2(v_1)), \ldots, (f_1(v_r), f_2(v_r))\}\in f_1(e)\times f_2(e)\subseteq E(H_1\times H_2).$$ 
Thus the map $f$ takes edges in $H$ to edges in $H_1\times H_2$, it is a graph homomorphism. 
Therefore, $H$ is  $(H_1\times H_2)$-colorable. 
\end{proof}
\section{Proof of Theorem \ref{degetheorem} and Theorem \ref{1and1.0962}}
\subsection{Proof of Theorem \ref{degetheorem}}

For $\{1, 3\}$-graph $H$,  we have $1 \leq \pi(H)\leq 2$.   
Observe that the product of two general $R$-graphs could be an $R$-graph. This is very useful in determining the Tur\'an density. 
A degenerate $R$-graph $H$ must be $G$-colorable for any $R$-graph $G$ with $\lambda(G)>1$.  
By Lemma \ref{productcolorable}, it
must be colorable by the product of these $R$-graphs. 
In this section, we will characterize the degenerate $\{1, 3\}$-graph. 
Let's consider $H^{\{1, 3\}}_5$,  the $4$-vertex $\{1, 3\}$-graph with edge set $\{2, 3, 124, 135, 145\}.$ 
\begin{center}
  \begin{tikzpicture}[scale=0.5, vertex/.style={draw,shape=circle,fill=white,minimum size=1mm}]
\shadedraw[left color=black!50!white, right color=black!10!white, draw=black!10!white] (1, 3)--(-2, 0)--(0, 0)--cycle;
\shadedraw[left color=black!50!white, right color=black!10!white, draw=black!10!white] (1, 3)--(0, 0)--(2, 0)--cycle;
\shadedraw[left color=black!50!white, right color=black!10!white, draw=black!10!white] (1, 3)--(4, 0)--(2, 0)--cycle;

\node at (1, 3) [vertex, scale=0.5] (v1) [label=above:{1}] {};   
\node at (-2, 0) [vertex, scale=0.5] (v2) [fill=black, label=below:{2}]   {};
\node at (4, 0) [vertex, scale=0.5] (v3)  [fill=black, label=below:{3}] {};   
\node at (0, 0) [vertex, scale=0.5] (v4)  [label=below:{4}]{};
\node at (2, 0) [vertex, scale=0.5] (v5)  [label=below:{5}]{};  
  
\node at (1,-1.5) {$H_5^{\{1, 3\}}$};
 \end{tikzpicture}
\end{center}
We first prove the following lemma.
\begin{lemma}\label{13H5b}
Any degenerate $\{1, 3\}$-graph is $H^{\{1, 3\}}_5$-colorable.
\end{lemma}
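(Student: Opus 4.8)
The plan is to exploit the product construction together with the Lagrangian criterion mentioned just before the lemma: if $H$ is degenerate, then $H$ must be $G$-colorable for every $\{1,3\}$-graph $G$ with $\lambda(G)>1$, and by Lemma~\ref{productcolorable} it must be colorable by the product of any finite collection of such $G$'s. In particular, $H$ is $(H_A\times H_B)$-colorable, since $\lambda(H_A)=1+\frac{\sqrt3}{18}>1$ and $\lambda(H_B)=\frac49+\frac{\sqrt3}{3}>1$. So the first step is to write down $H_A\times H_B$ explicitly (this is exactly Example~\ref{formdege}), and then the whole lemma reduces to a purely finite, structural claim: $H_A\times H_B$ is $H^{\{1,3\}}_5$-colorable. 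Since "$H$ is $H_5^{\{1,3\}}$-colorable" is transitive under composition of homomorphisms, once I have a homomorphism $H_A\times H_B \to H_5^{\{1,3\}}$, every degenerate $\{1,3\}$-graph $H$ inherits one by composing $H\to H_A\times H_B \to H_5^{\{1,3\}}$.

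The second step is therefore to exhibit the homomorphism $g\colon V(H_A\times H_B)\to V(H_5^{\{1,3\}})=[5]$ by hand. The vertex set of $H_A\times H_B$ is $\{ax,ay,bx,by,cx,cy\}$ with edges $\{ax\},\{bx\},\{cy,bx,ay\},\{cy,ay,by\},\{cy,by,ax\},\{cx,ay,by\}$, while $H_5^{\{1,3\}}$ has $1$-edges $\{2\},\{3\}$ (its black vertices) and $3$-edges $124,135,145$. Black vertices must map to black vertices, so $\{ax,bx\}$ must land in $\{2,3\}$; the natural guess is $ax\mapsto 2$, $bx\mapsto 3$. Then I need the four $3$-edges of $H_A\times H_B$ to map onto $3$-edges of $H_5^{\{1,3\}}$. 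Trying $ay\mapsto 4$, $by\mapsto 5$, $cy\mapsto 1$, $cx\mapsto 1$ one checks: $\{cy,bx,ay\}\mapsto\{1,3,4\}$ — not an edge, so this particular assignment fails, and one must search among the (finitely many) maps respecting the black/white and the black-vertex constraints. The real content is just this bounded case check: there are at most $2\cdot 5^4$ candidate maps, but the edge $\{cy,by,ax\}$ containing the black vertex $ax$ and the edge $\{cx,ay,by\}$ containing no black vertex, together with the three edges through $cy$, pin the map down quickly. I expect an assignment such as $ax\mapsto 2,\ bx\mapsto 3,\ cy\mapsto 1,\ ay\mapsto 4,\ by\mapsto 5,\ cx\mapsto 1$ to need only a small permutation of roles among $\{4,5\}$ and the images of $cx,cy$; the write-up will simply state the correct map and verify the five edge conditions in one line each.

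The main obstacle — really the only nontrivial point — is making sure the finite search actually succeeds, i.e. that $H_A\times H_B$ genuinely is $H_5^{\{1,3\}}$-colorable, and presenting the verifying map cleanly. (If it turned out that no single product of two constructions sufficed, one would enlarge the collection of $G$'s with $\lambda(G)>1$ and take a larger product, again invoking Lemma~\ref{productcolorable}; but the choice of the two constructions $H_A,H_B$ in Section~2 strongly signals that $H_A\times H_B$ already works, and indeed $H_5^{\{1,3\}}$ has exactly $5$ vertices, matching $|V(H_A\times H_B)|=6$ after identifying $cx$ and $cy$.) Everything else — transitivity of colorability, the reduction from "degenerate" to "colorable by a high-Lagrangian graph", and the product lemma — is already available in the excerpt, so the proof is short: cite the Lagrangian/degeneracy reduction, invoke Lemma~\ref{productcolorable} with $H_A$ and $H_B$, recall $E(H_A\times H_B)$ from Example~\ref{formdege}, give the explicit map to $[5]$, and check the five edges.
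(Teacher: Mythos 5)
Your proposal follows essentially the same route as the paper: reduce degeneracy to $H_A$- and $H_B$-colorability, apply Lemma~\ref{productcolorable} to get $(H_A\times H_B)$-colorability, and finish with an explicit homomorphism $H_A\times H_B\to H_5^{\{1,3\}}$. The finite check you left open succeeds exactly as you predicted: swapping the roles of $4$ and $5$ in your trial map (or, equivalently, the paper's choice $f(cx)=f(cy)=1$, $f(bx)=2$, $f(ax)=3$, $f(ay)=4$, $f(by)=5$) sends all six edges of $H_A\times H_B$ to edges of $H_5^{\{1,3\}}$.
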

\begin{proof}
Observe that a  degenerate $\{1, 3\}$-graph $H$ must be contained in $G_A$ and $G_B$. Equivalently, $H$ is both $H_A$ and $H_B$-colorable,  then it must be colorable by the product $H_A\times H_B$.
We define a map $f:V(H_A\times H_B)\to [5]$ such that:
$f(cx)=f(cy)=1$, 
$f(bx)=2$,
 $f(ax)=3$,
 $f(ay)=4$,
 $f(by)=5$.
Obviously, $f$ is a graph homomorphism from $H_A\times H_B$ to $H^{\{1, 3\}}_5$.
 The result follows. 
\end{proof}
Let
$K_3^{\bullet \bullet}$ be a $\{1,3\}$-graph on $3$ vertices with edges $\{1, 2, 123\}$,
and $G_4^{\bullet}$ be a $\{1,3\}$-graph on $4$ vertices with edges  $\{1, 123, 134, 234\}$. 

\begin{remark}
$K_3^{\bullet  \bullet}$ is not contained in $G_A$ whose edge density reaches $1+ \frac{\sqrt{3}}{18}$, and
$G_4^{\bullet}$ is not contained in $G_B$ whose edge density reaches $\frac{4}{9}+\frac{\sqrt{3}}{3}$.  Thus both 
$K_3^{\bullet  \bullet}$ and $G_4^{\bullet}$ are non-degenerate $\{1,3\}$-graphs.
\end{remark}

\begin{lemma}\label{13H5a}
$\pi(\{K_3^{\bullet  \bullet}, G_4^{\bullet}\})=1$. 
\end{lemma}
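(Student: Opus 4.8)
\textbf{Proof proposal for Lemma~\ref{13H5a}.}
The plan is to prove the two inequalities $\pi(\{K_3^{\bullet\bullet}, G_4^{\bullet}\})\ge 1$ and $\pi(\{K_3^{\bullet\bullet}, G_4^{\bullet}\})\le 1$ separately. The lower bound is immediate: the complete $1$-uniform hypergraph $K_n^{\{1\}}$ (all singletons, no $3$-edges) has Lubell value $1$ and contains neither $K_3^{\bullet\bullet}$ nor $G_4^{\bullet}$, since both of those require a $3$-edge; hence $\pi_n\ge 1$ for all $n$, so $\pi\ge 1$. Actually, to get a matching lower bound that is robust I would instead exhibit a blow-up of the chain $C^{\{1,3\}}=\{1,123\}$, or of $H_5^{\{1,3\}}$, which by the chain being degenerate has density approaching $1$ and is $\{K_3^{\bullet\bullet},G_4^{\bullet}\}$-free (a blow-up of $H_5^{\{1,3\}}$ contains neither, as one checks from the edge list — this is the content we need anyway). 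The real work is the upper bound.

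For the upper bound, let $G=G_n$ be a $\{K_3^{\bullet\bullet},G_4^{\bullet}\}$-free $\{1,3\}$-graph on $n$ vertices with $h_n(G)$ close to $\pi_n$. Write $B$ for the set of black vertices (those forming a $1$-edge) and $W=V\setminus B$ for the white vertices. The forbidden graph $K_3^{\bullet\bullet}=\{1,2,123\}$ says: \emph{no $3$-edge contains two black vertices}. So every $3$-edge has at least two white vertices. The forbidden graph $G_4^{\bullet}=\{1,123,134,234\}$ says: there is no black vertex $v$ together with three further vertices $x,y,z$ such that all of $vxy,vxz,vyz,xyz$ are edges — i.e. no black vertex $v$ whose white-neighbourhood link contains a triangle together with the all-white edge, roughly; more precisely $G_4^{\bullet}$ forbids a $K_4^{\{3\}}$-type configuration $\{123,134,234\}$ (a $3$-uniform "$K_4$ minus an edge on $\{1,2,3,4\}$"? — here it is the three triples $123,134,234$ missing $124$) sitting on $\{x,y,z,\text{plus}\}$ with one vertex additionally black. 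I would first translate both constraints into purely local statements about the link structure of black vertices and about the $3$-uniform hypergraph $G^3$, and then argue that these constraints force $|E(G^3)|=o(n^3)$, i.e. the $3$-edge density term is $o(1)$, while the $1$-edge term is trivially at most $1$; summing gives $h_n(G)\le 1+o(1)$.

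The key step — and the main obstacle — is showing $|E(G^3)|=o(n^3)$. Here is how I would try to get it. Since no $3$-edge has two black vertices, every $3$-edge either lies entirely in $W$ or has exactly one black vertex and two white vertices. For the all-white $3$-edges, $G^3[W]$ is a $3$-uniform hypergraph which, together with the links of black vertices, is constrained by $G_4^{\bullet}$-freeness. The cleanest route is to show that $G$ is $H_5^{\{1,3\}}$-colorable (hence by the Lemma-preceding-material degenerate with $\pi=1$), but that is what Theorem~\ref{1and1.0962}(1) will ultimately deliver, so for this lemma I would instead argue directly: suppose for contradiction that $|E(G^3)|\ge \delta n^3$ for some fixed $\delta>0$ and infinitely many $n$. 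Apply a supersaturation / removal-type argument, or the hypergraph Ramsey-type pigeonhole used throughout non-uniform Tur\'an theory: a positive density of $3$-edges forces, for some black vertex $v$ (if $B\ne\emptyset$) a dense link, and within a dense $3$-graph one finds the configuration $\{123,134,234\}$, which combined with a black vertex gives $G_4^{\bullet}$; if instead $B=\emptyset$ then $G$ is $3$-uniform and $\{123,134,234\}$-free, but a $3$-uniform hypergraph with positive edge density contains $K_4^{(3)}$-minus-an-edge (this is a known non-zero-Tur\'an-density fact — $\{123,134,234\}$ is \emph{not} degenerate as a $3$-uniform graph, so actually we need $B$ to be essentially all of $V$; handling this case is exactly the subtlety). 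So the argument must interleave the two forbidden graphs: roughly, if many vertices are white and carry many $3$-edges we invoke $G_4^{\bullet}$ via a black vertex attached to them, and if few vertices are white the $3$-edge count is automatically $o(n^3)$ because each $3$-edge uses $\ge 2$ white vertices, giving $|E(G^3)|\le |W|^2 n = o(n^3)$ once $|W|=o(n)$; the missing middle case $|W|=\Theta(n)$ with $G^3[W]$ dense is where the real estimate lives and where I expect to spend the effort, likely by a careful double-counting of "cherries" (pairs of $3$-edges sharing two white vertices) to locate the $\{123,134,234\}$ pattern and then attaching any black vertex — using that $B\ne\emptyset$ on the extremal $G$, which we may assume since otherwise we add one black vertex harmlessly. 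Once $|E(G^3)|=o(n^3)$ is established, $h_n(G)=\frac{|B|}{n}+\frac{|E(G^3)|}{\binom n3}\le 1+o(1)$, so $\pi\le 1$, completing the proof together with the lower bound.
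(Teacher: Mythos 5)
There is a genuine gap, and it is located exactly where you predicted you would ``spend the effort.'' Your upper-bound strategy is to show $|E(G^3)|=o(n^3)$ and then conclude $h_n(G)\le \frac{|B|}{n}+o(1)\le 1+o(1)$. But $|E(G^3)|=o(n^3)$ is simply false for $\{K_3^{\bullet\bullet},G_4^{\bullet}\}$-free graphs: the complete $3$-uniform hypergraph with \emph{no} $1$-edges contains neither forbidden graph (both require a black vertex inside a $3$-edge, resp.\ a $1$-edge at all) and has $|E(G^3)|=\binom n3$ with $h_n=1$. More generally, a graph with $|B|=o(n)$ black vertices, no $3$-edge meeting $B$, and all white triples present is free of both forbidden graphs and has $h_n\to 1$ with $\Theta(n^3)$ triples. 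This also breaks the supersaturation step in your ``middle case'': finding the pattern $\{123,134,234\}$ inside the dense white $3$-graph and then ``attaching any black vertex'' does not create a copy of $G_4^{\bullet}$, because in $G_4^{\bullet}=\{1,123,134,234\}$ the black vertex must itself lie in two of the three $3$-edges; a black vertex with empty link is harmless no matter how dense $G^3[W]$ is. (Likewise ``add one black vertex harmlessly'' when $B=\emptyset$ does not help, since the added vertex has no $3$-edges through it.) So the term $\frac{|E(G^3)|}{\binom n3}$ cannot be killed; what must be proved is a trade-off: the density of triples through black vertices limits the density of all-white triples.

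That trade-off is what the paper's proof quantifies. With $x=|S|/n$ ($S$ the black vertices) and $y$ the density of $3$-edges of type $\binom S1\times\binom{\overline S}2$ ($K_3^{\bullet\bullet}$-freeness already forces every $3$-edge to have at most one black vertex), one picks a black vertex $s_0$ whose link has at least the average size $y\binom{|\overline S|}2$ and sets $W_u=\{v:\ s_0uv\in E(G)\}$ for white $u$. $G_4^{\bullet}$-freeness says that no pair from $W_u$ forms an all-white edge with $u$, so at least $\frac13\sum_u\binom{|W_u|}2\gtrsim\frac16 y^2|\overline S|^3$ white triples are missing; by Cauchy--Schwarz this yields
\[
h_n(G)\le x+3x(1-x)^2y+(1-x)^3-y^2(1-x)^3+o_n(1),
\]
and optimizing in $y$ (with the two regimes $x\le\frac25$ and $x\ge\frac25$) shows the right-hand side never exceeds $1+o_n(1)$. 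Your lower bound and your structural observations (every $3$-edge has at most one black vertex; the $G_4^{\bullet}$ constraint as a statement about links of black vertices versus white triples) are correct and are the same ingredients the paper uses, but without a quantitative inequality of this kind the argument does not close, and the specific milestone you set ($|E(G^3)|=o(n^3)$) is unattainable.
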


\begin{proof}
For any positive integer $n$, 
let $G$ be a $\{K_3^{\bullet  \bullet}, G_4^{\bullet}\}$-free $\{1, 3\}$-graph on $n$ vertices.
Denote $S$ as the set of all $1$-edges of $G$, i.e. $S=\{v\in V(G): \{v\}\in E(G)\}$, and let $|S|=xn$ for some $x\in (0, 1)$. 
Let $\overline{S}$ be the complement of $S$, i.e. $\overline{S}=V(G)\setminus S$, then $|\overline{S}|=(1-x)n$. 

Denote $E(G^3)$ as the set of all $3$-edges of $G$. To forbidden $K_3^{\bullet  \bullet}$, there is at most one black vertex in any $3$-edges of $G$, thus we have 
$$E(G^3)\subseteq \binom{S}{1}\times \binom{\overline{S}}{2}\cup \binom{\overline{S}}{3}.$$

We consider the $3$-edges of $G$ in edge set $\binom{S}{1}\times\binom{\overline{S}}{2}$. 
Define $y$ as the average edge density of such $3$-edges in $G$. Thus
$$y=\frac{| E(G^3)\cap (S\times\binom{\overline{S}}{2})|}{|S|\times\binom{|\overline{S}|}{2}}. $$ 
 
Note that there exists one vertex $s_0\in S$ such that $\vert C(s_0)\vert\geq y\times \binom{|\overline{S}|}{2} $, where $C(s_0)$ is the set of $3$-edges that contain the black vertex $s_0$. 
For any vertex $u\in \overline{S}$, define 
$$W_u:= \{ v\in \overline{S}\vert s_0uv\in E(G)\}. $$ 
We then have
$$ \sum\limits_{u\in \overline{S}}|W_u|\leq |\overline{S}|\times(|\overline{S}|-1)$$
and 
$$ \vert C(s_0) \vert=\frac{1}{2}\sum\limits_{u\in \overline{S}}|W_u|,  $$
which implies $$\sum\limits_{u\in \overline{S}}|W_u|\geq 2y\times \binom{|\overline{S}|}{2}.$$

To forbidden $G_4^{\bullet}$, if $s_0uv, s_0uk \in E(G)$, then $uvk \not\in E(G)$. 
Since for each $u\in \overline{S} $, there are $\binom{|W_u|}{2}$ pair of vertices each can form a $3$-edge with $u$,  we need to remove these edges in  $\binom{\overline{S}}{3}$. 
Let $N$ be the number of $3$-edges in ${\bar S \choose 2}$ but not in $G$, by Cauchy-Schwarz inequality, we have 
\begin{align}
N
&\geq \frac{1}{3}\sum\limits_{u\in \overline{S}} \binom{|W_u|}{2} \nonumber \\ 
& \geq \frac{1}{6}\frac{1}{|\overline{S}|}(\sum\limits_{u\in \overline{S}}|W_u|)^2 -\frac{1}{6}\sum\limits_{u\in \overline{S}}|W_u| \nonumber \\
&\geq \frac{1}{6}y^2 |\overline{S}|^3-\frac{1}{6}|\overline{S}|\times(|\overline{S}|-1).
\end{align}

Thus we have 
$$h_n(G)\leq x+y\times 3x\times (1-x)^2+ (1-x)^3-y^2(1-x)^3 + o_n(1).$$

When $x\leq \frac{2}{5}$, the above expression reaches the maximum value when $y=\frac{3}{2}\frac{x}{1-x}\leq 1$. 
When $x\geq \frac{2}{5}$,  the above expression reaches the maximum value when $y=1$. 
Thus we obtain
\[
  h_n(G)\leq\begin{cases}
               x+(1-x)^3+\frac{9}{4}x^2(1-x) & \text{for $x\leq \frac{2}{5}$;}\\
              x+3x(1-x)^2 & \text{for $x\geq \frac{2}{5}$.}
            \end{cases}
\]

When $x\leq \frac{2}{5}$, by solving $f(x)=x+(1-x)^3+\frac{9}{4}x^2(1-x)\leq 1$, we get $x\leq \frac{8}{13}$. This always holds since $x\leq \frac{2}{5} \leq \frac{8}{13}$. 
When $x\geq \frac{2}{5}$,  $g(x)=x+3x(1-x)^2\leq 1$ is equivalent to $3x(1-x)\leq 1$ which  is always true. Thus in both cases, we have 
$h_n(G)\leq 1+o_n(1)$, implies that 
$\pi(\{K_3^{\bullet  \bullet}, G_4^{\bullet}\})=\lim_{n\to\infty} h_n(G)=1$. 
The proof is complete. 
\end{proof}

\begin{proof}[Proof of Theorem \ref{degetheorem}]
Note that $H^{\{1, 3\}}_5$ is $K_3^{\bullet  \bullet}$ and $G_4^{\bullet}$-colorable respectively, we have $\pi(H^{\{1, 3\}}_5)\leq \pi(\{K_3^{\bullet  \bullet}, G_4^{\bullet}\})$.  By Lemma \ref{13H5b} and Lemma \ref{13H5a},  the result follows. 
\end{proof}

\subsection{Proof of Theorem \ref{1and1.0962}}
In this subsection, we will consider the non-degenerate $\{1, 3\}$-graphs.  In particularly, we consider the non-degenerate $3$-partite $\{1, 3\}$-graphs. A
hypergraph is called {\em $3$-partite} if its vertex set $V$ can be partitioned into $3$ different classes $V_1, V_2, V_3$ such that every edge intersects each class in exactly one vertex.
A $3$-partite $\{1, 3\}$-graph is $K_3^{ \bullet  \bullet  \bullet}$-colorable, where $K_3^{ \bullet  \bullet  \bullet}$ is a $\{1,3\}$-graph on $3$ vertices with edge set $\{1, 2, 3, 123\}$. 

So far we know that the chain $C^{\{1, 3\}}=\{1, 123\}$ is  $3$-partite and it is degenerate,  while a slightly larger $3$-partite $\{1, 3\}$-graph 
$K_3^{\bullet  \bullet}=\{1, 2, 123\}$ is not degenerate. 
We have $\pi(K_3^{\bullet  \bullet})\geq 1+ \frac{\sqrt{3}}{18}$ since it's not contained in the $G_A$. 
 Now we are ready to prove Theorem  \ref{1and1.0962} and Corollary \ref{1and1.0218}. 
\begin{proof}[Proof of Theorem \ref{1and1.0962}]
  For Item 1: Since $H$ satisfies
  $K_3^{\bullet \bullet}\not\subseteq H \subseteq K_3^{\bullet
    \bullet}(s)$ for $s\geq 2$, then $H$ is
  $K_3^{\bullet \bullet}$-colorable, which means there exists a vertex
  partition $V(H)=V_1\cup V_2\cup V_3$ so that $H$ is $3$-partite and
  the level-graph $H^1$ only appears in at most two vertex partitions
  (say $V_2$ and $V_3$). Since $H$ does not contain
  $K_3^{\bullet \bullet}$ as sub-graph, then each edge of the
  level-graph $H^3$ can only intersect one vertex in $V_1$, plus one
  white (black) vertex in $V_2$ and one black (white) vertex in $V_3$,
  or intersect one vertex in $V_1$ plus two white vertices in $V_2$
  and $V_3$. Let $f$ be a map such that $f(v)=1$ if $v\in V_1$,
  $f(v)=2$ if $v$ is a black vertex in $V_2$, $f(v)=5$ if $v$ is a
  white vertex in $V_2$, and $f(v)=3$ if $v$ is a black vertex in
  $V_3$, $f(v)=4$ if $v$ is a white vertex in $V_3$. One can check
  that $f$ is a hypergraph homomorphism from $H$ to $H^{\{1, 3\}}_5$.
  Thus $H$ is $H^{\{1, 3\}}_5$-colarable, we have $\pi(H)=1$.

  For Item 2: Since $K_3^{\bullet  \bullet}\subseteq H \subseteq K_3^{\bullet  \bullet}(s)$ for $s\geq 2$,
by Theorem \ref{squeezee},
  we have $\pi(H)=\pi(K_3^{\bullet  \bullet})$. 
For any $K_3^{\bullet  \bullet}$-free $\{1, 3\}$-graph $G$ on $n$ vertices, 
let $X$ be the set of all $1$-edges in $G$, 
and $Y\subseteq V(G)$ be the complement of $X$. On one hand, since it is $K_3^{\bullet  \bullet}$-free, there is no $3$-edge of form ${X\choose 3}$ or form $\binom{X}{2}\times \binom{Y}{1}$. Thus $G$ is 
$H_A$-colorable. 
Therefore $\lim_{n\to \infty} h_n(G)\leq \lambda(H_A)=1+ \frac{\sqrt{3}}{18}$.
On the other hand, the construction $G_A$ is $K_3^{\bullet  \bullet}$-free. We have
$\pi(K_3^{\bullet  \bullet})\geq \lambda(H_A)=1+ \frac{\sqrt{3}}{18}.$
Thus,  we have
 $\pi(H)=\pi(K_3^{\bullet  \bullet})= 1+ \frac{\sqrt{3}}{18}$. 
\end{proof}

\begin{proof}[Proof of Corollary \ref{1and1.0218}:]
Let $H$ be any $\{1,3\}$-graph with $\pi(H)<\lambda(H_B)=\frac{4}{9}+\frac{\sqrt{3}}{3}$. Then $H$ must be $H_B$-colorable, hence $K_3^{\bullet  \bullet}$-colorable. By Item 2 of Theorem \ref{1and1.0962}, $\pi(H)$ is either $1$ or $1+ \frac{\sqrt{3}}{18}$, thus we must have $\pi(H)=1$. 
\end{proof}

\section{ The $3$-partite $\{1, 3\}$-graphs}
In previous section, all $\{1, 3\}$-graphs we studied are $3$-partite.  
In this section,  we continue to study the Tur\'an densities of $3$-partite $\{1, 3\}$-graphs.

\begin{lemma}\label{K3}
Let $H$ be a $3$-partite $\{1, 3\}$-graph such that 
$K_3^{ \bullet  \bullet  \bullet}\subseteq H$, where $K_3^{ \bullet  \bullet  \bullet}=\{1, 2, 3, 123\}$. 
Then $\pi(H)= 1+\frac{2\sqrt{3}}{9}$. 
\end{lemma}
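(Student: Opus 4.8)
The plan is to first reduce the problem to computing $\pi(K_3^{\bullet\bullet\bullet})$ and then to run the same type of Lagrangian argument used for Item 2 of Theorem \ref{1and1.0962}. Since $H$ is $3$-partite it is $K_3^{\bullet\bullet\bullet}$-colorable, hence $H\subseteq K_3^{\bullet\bullet\bullet}(s)$ where $s$ is the size of the largest vertex class; together with the hypothesis $K_3^{\bullet\bullet\bullet}\subseteq H$, the Squeeze Theorem (Theorem \ref{squeezee}) gives $\pi(H)=\pi(K_3^{\bullet\bullet\bullet})$. So it suffices to prove $\pi(K_3^{\bullet\bullet\bullet})=1+\frac{2\sqrt{3}}{9}$.

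For this I would introduce the auxiliary $\{1,3\}$-graph with loops $H_C$ on two vertices $\{x,y\}$ with edge set $\{x,\,xxy,\,xyy,\,yyy\}$. Its polynomial form is $\lambda(H_C,(z_1,z_2))=z_1+3z_1^2z_2+3z_1z_2^2+z_2^3$, which on the simplex $z_1+z_2=1$ simplifies to $z_1+1-z_1^3$. A one-variable optimization ($1-3z_1^2=0$) shows this is maximized at $z_1=\tfrac{1}{\sqrt{3}}$, giving $\lambda(H_C)=1+\frac{2\sqrt{3}}{9}$.

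The upper bound then runs as follows. Let $G$ be any $K_3^{\bullet\bullet\bullet}$-free $\{1,3\}$-graph on $n$ vertices and let $X$ be its set of black vertices. Forbidding $K_3^{\bullet\bullet\bullet}$ is precisely the statement that no $3$-edge of $G$ is contained in $X$, i.e. $E(G^3)\cap\binom{X}{3}=\emptyset$. Hence the map sending every vertex of $X$ to $x$ and every other vertex to $y$ is a homomorphism from $G$ to $H_C$ (every $1$-edge lands on $x$, and every $3$-edge lands on one of $yyy$, $xyy$, $xxy$), so $G$ is $H_C$-colorable and $\lim_{n\to\infty}h_n(G)\le\lambda(H_C)=1+\frac{2\sqrt{3}}{9}$, exactly as in the $H_A$ part of the proof of Theorem \ref{1and1.0962}(2). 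For the matching lower bound, take the blow-up $G_C$ of $H_C$ with $|X|=\lfloor n/\sqrt{3}\rfloor$, that is $E(G_C)=\binom{X}{1}\cup\bigl(\binom{[n]}{3}\setminus\binom{X}{3}\bigr)$; since no $3$-edge of $G_C$ lies inside the black set $X$, $G_C$ is $K_3^{\bullet\bullet\bullet}$-free, and a direct count gives $h_n(G_C)=\frac{|X|}{n}+1-\left(\frac{|X|}{n}\right)^3+o_n(1)=1+\frac{2\sqrt{3}}{9}+o_n(1)$. Combining the two bounds yields $\pi(K_3^{\bullet\bullet\bullet})=1+\frac{2\sqrt{3}}{9}$, and therefore $\pi(H)=1+\frac{2\sqrt{3}}{9}$.

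I do not anticipate a real obstacle: the whole argument is the direct analogue of the proof of Theorem \ref{1and1.0962}(2), with $H_A$ replaced by $H_C$. The two points that need a moment's care are verifying that ``$K_3^{\bullet\bullet\bullet}$-free'' translates literally into ``no $3$-edge on three black vertices'' (immediate from the definition of subgraph containment), and carrying out the elementary calculus step together with checking that $G_C$ is genuinely $K_3^{\bullet\bullet\bullet}$-free rather than merely $K_3^{\bullet\bullet\bullet}$-colorable.
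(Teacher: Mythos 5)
Your proposal is correct and follows essentially the same route as the paper: reduce to $\pi(K_3^{\bullet\bullet\bullet})$ (the paper does this via colorability plus containment, you via the Squeeze Theorem, which is the same reduction), then obtain the upper bound by projecting any $K_3^{\bullet\bullet\bullet}$-free graph onto the looped graph $H_C$ with $E(H_C)=\{x,xxy,xyy,yyy\}$ and the lower bound from the blow-up $G_C$ with $|X|=\frac{\sqrt{3}}{3}n$. Your explicit verification that $\lambda(H_C)=1+\frac{2\sqrt{3}}{9}$ is a detail the paper asserts without computation.
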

\begin{proof}
Since any $3$-partite $\{1, 3\}$-graph is $K_3^{ \bullet  \bullet  \bullet}$-colorable, we only need to prove
$\pi(K_3^{ \bullet  \bullet  \bullet})= 1+\frac{2\sqrt{3}}{9}.$

On one hand,
consider an extremal $K_3^{ \bullet  \bullet  \bullet}$-free $\{1, 3\}$-graph $G_n$. Let $X$ be the set vertices of 1-edges in $G_n$.
Projecting all the vertices in $X$ into a single vertex $x$ and all the vertices not in $X$ into a single vertex $y$,
we get an $\{1, 3\}$-graph (with loops) $H_C$: where $E(H_c)=\{x, xxy, xyy, yyy\}$.
This projection is a hypergraph homomorphism from $G_n$ to $H_C$ since $G$ is $K_3^{ \bullet  \bullet  \bullet}$-free.
Thus $G$ is $H_c$-colorable. In particular, we have
$$\pi(K_3^{ \bullet  \bullet  \bullet)}=\lim_{n\to\infty}h_n(G_n)\leq \lambda(H_c)=1+\frac{2\sqrt{3}}{9}.$$

 On the other hand, any blow-up of $H_c$ does not contain the sub-graph $K_3^{ \bullet  \bullet  \bullet}$.
The blow-up graph $G_C$ has the maximal edge density $1+\frac{2\sqrt{3}}{9}$:
\begin{center}
 \begin{tikzpicture}[scale=0.8, vertex/.style={circle, draw=black, fill=white} ]
\shadedraw[left color=black!50!white, right color=black!10!white, draw=black!10!white] (0,0)--(1, 2)--(2,0)--cycle;

\node at (0,0) [vertex, scale=0.5] (v1) [fill=black, label=left:{1}] {};   
\node at (1,2) [vertex, scale=0.5] (v2)  [fill=black, label=above:{2}] {};
\node at (2,0) [vertex, scale=0.5] (v3)  [fill=black, label=right:{3}] {};   
 
\node at (1,-1) {$K_3^{ \bullet  \bullet  \bullet}$};
 \end{tikzpicture}
 \hfil
  \begin{tikzpicture}[scale=0.3, vertex/.style={circle, draw=black, fill=white} ]
\draw (0,0) ellipse (2 and 3.5);
\draw (6,0) ellipse (2 and 3.5);
\shadedraw[left color=black!50!white, right color=black!10!white, draw=black!10!white] (0,1.5)--(5,-0.5)--(5,1.5)--cycle;
\shadedraw[left color=black!50!white, right color=black!10!white, draw=black!10!white] (0,0)--(0,-2)--(5,-2)--cycle;

\shadedraw[left color=black!50!white, right color=black!10!white, draw=black!10!white] (6.5,2)--(5.5,-1)--(7.5,-1)--cycle;
   \node at (0,1.5) [vertex, scale=0.5] (v1) [fill=black] {};
     \node at (0,0) [vertex, scale=0.5] (v2) [fill=black] {};
       \node at (0,-2) [vertex, scale=0.5] (v3) [fill=black] {};
\node at (0,-3) {$X$};
\node at (6,-3) {$Y$};
\node at (3, -5) {$G_C$: $ h_n(G_C)=1+\frac{2\sqrt{3}}{9}+ o_n(1)$, at $|X|=\frac{\sqrt{3}}{3}n$.};
  \end{tikzpicture}
\end{center}
Hence, $\pi(K_3^{ \bullet  \bullet  \bullet)}= \lambda(H_c)=1+\frac{2\sqrt{3}}{9}.$
\end{proof}

Let us restrict $H$ to be $3$-partite but containing no $K_3^{ \bullet  \bullet  \bullet}$ as sub-graph. One can check such $H$ must be $H_6^{\{1, 3\}}$-colorable, 
where $$H_6^{\{1, 3\}}=\{1, 2, 3, 124, 145, 135, 236, 246, 356, 456\}.$$
$H_6^{\{1, 3\}}$ is not contained in  $G_D$ with maximal edge density $\frac{4}{3}$.
\begin{center}
  \begin{tikzpicture}[scale=0.4, vertex/.style={draw,shape=circle,fill=white,minimum size=1mm}]
\shadedraw[left color=black!50!white, right color=black!10!white, draw=black!10!white] (1, 2)--(-2, -2)--(0, 0)--cycle; 
\shadedraw[left color=black!50!white, right color=black!10!white, draw=black!10!white] (1, 2)--(0, 0)--(2, 0)--cycle; 
\shadedraw[left color=black!50!white, right color=black!10!white, draw=black!10!white] (1, 2)--(4, -2)--(2, 0)--cycle;
\shadedraw[left color=black!50!white, right color=black!10!white, draw=black!10!white] (-2, -2)--(0, 0)--(1,-1)--cycle; 
\shadedraw[left color=black!50!white, right color=black!10!white, draw=black!10!white] (4, -2)--(2, 0)--(1,-1)--cycle; 
\shadedraw[left color=black!50!white, right color=black!10!white, draw=black!10!white] (-2, -2)--(4, -2)--(1,-1)--cycle;
\shadedraw[left color=black!50!white, right color=black!10!white, draw=black!10!white] (0, 0)--(2, 0)--(1,-1)--cycle; 

\node at (1, 2) [vertex, scale=0.5] (v1) [fill=black, label=above:{1}] {};   
\node at (-2, -2) [vertex, scale=0.5] (v2) [fill=black, label=left:{2}]   {};
\node at (4, -2) [vertex, scale=0.5] (v3)  [fill=black, label=below:{3}] {};   
\node at (0, 0) [vertex, scale=0.5] (v4)  [label=above:{4}]{};
\node at (2, 0) [vertex, scale=0.5] (v5)  [label=above:{5}]{};  
\node at (1,-1) [vertex, scale=0.5] (v6)  [label=below:{6}] {};   
\node at (1,-4) {$H^{\{1, 3\}}_6$};
 \end{tikzpicture}
 \hfil
 \begin{tikzpicture}[scale=0.3, vertex/.style={circle, draw=black, fill=white} ]
\draw (0,0) ellipse (2 and 3.5);
\draw (6,0) ellipse (2 and 3.5);
\shadedraw[left color=black!50!white, right color=black!10!white, draw=black!10!white] (0,1.5)--(5,-0.5)--(5,1.5)--cycle;
\shadedraw[left color=black!50!white, right color=black!10!white, draw=black!10!white] (0,0)--(0,-2)--(5,-2)--cycle;
   \node at (0,1.5) [vertex, scale=0.5] (v1) [fill=black] {};
     \node at (0,0) [vertex, scale=0.5] (v2) [fill=black] {};
       \node at (0,-2) [vertex, scale=0.5] (v3) [fill=black] {};
\node at (0,-3) {$X$};
\node at (6,-3) {$Y$};
\node at (3, -5){$G_D$: $h_n(G_D)=\frac{4}{3}+ o_n(1)$, at $|X|=\frac{4}{3}n$.};
  \end{tikzpicture}
\end{center}

So far we couldn't determine the upper bound of $\pi(H_6^{\{1, 3\}})$. We leave this open.  Let's turn our attention to the sub-graphs of $H_6^{\{1, 3\}}$ and we aim to determine their Tur\'an densities. 
Now let us first consider two sub-graphs of $H_6^{\{1, 3\}}$: $H_5^{*}=\{1, 2, 3, 124, 145, 135\}$ and $H_{6}^{*}=\{1, 2, 3, 124, 135, 236\}$. For both of them, the Tur\'an density is greater than $1+ \frac{\sqrt{3}}{9}$, since they are not contained in $G_E$ and $G_F$ respectively($\lim_{n\to \infty} h_n(G_E)=\lim_{n\to \infty}h_n(G_F)$).  
\begin{center}
  \begin{tikzpicture}[scale=0.5, vertex/.style={draw,shape=circle,fill=white,minimum size=1mm}]
\shadedraw[left color=black!50!white, right color=black!10!white, draw=black!10!white] (1, 2.7)--(-1.7, 0)--(0, 0)--cycle;
\shadedraw[left color=black!50!white, right color=black!10!white, draw=black!10!white] (1, 2.7)--(0, 0)--(1.7, 0)--cycle;
\shadedraw[left color=black!50!white, right color=black!10!white, draw=black!10!white] (1,2.7)--(4, 0)--(1.7, 0)--cycle;

\node at (1, 2.7) [vertex, scale=0.5] (v1) [fill=black, label=above:{1}] {};   
\node at (-1.7, 0) [vertex, scale=0.5] (v2) [fill=black, label=below:{2}]   {};
\node at (4, 0) [vertex, scale=0.5] (v3)  [fill=black, label=below:{3}] {};   
\node at (0, 0) [vertex, scale=0.5] (v4)  [label=below:{4}]{};
\node at (1.7, 0) [vertex, scale=0.5] (v5)  [label=below:{5}]{};  
  
\node at (1,-1) {$H_5^{*}$};
 \end{tikzpicture}
\hfil
  \begin{tikzpicture}[scale=0.3, vertex/.style={circle, draw=black, fill=white} ]
\draw (0,0) ellipse (2 and 3.5);
\draw (6,0) ellipse (2 and 3.5);
\shadedraw[left color=black!50!white, right color=black!10!white, draw=black!10!white] (6.5,2)--(5.5,-1)--(7.5,-1)--cycle;
\shadedraw[left color=black!50!white, right color=black!10!white, draw=black!10!white] (0,1)--(0,-1.5)--(5,-0.25)--cycle;

     \node at (0,1) [vertex, scale=0.5] (v2) [fill=black] {};
       \node at (0,-1.5) [vertex, scale=0.5] (v3) [fill=black] {};
\node at (0,-3) {$X$};
\node at (6,-3) {$Y$};
\node at (3, -5.8) {$G_E$:  $h_n(G_E)=\frac{9+\sqrt{3}}{9}+ o_n(1)$, at $|X|=(\frac{3+ \sqrt{3}}{6})n$.};
  \end{tikzpicture}
\end{center}

\begin{center}

 
\begin{tikzpicture}[scale=0.43, vertex/.style={draw,shape=circle,fill=white,minimum size=1mm}]
\shadedraw[left color=black!50!white, right color=black!10!white, draw=black!10!white] (1, 2)--(-2, -2)--(0, 0)--cycle; 

\shadedraw[left color=black!50!white, right color=black!10!white, draw=black!10!white] (1, 2)--(4, -2)--(2, 0)--cycle;

\shadedraw[left color=black!50!white, right color=black!10!white, draw=black!10!white] (-2, -2)--(4, -2)--(1,-1)--cycle;

\node at (1, 2) [vertex, scale=0.5] (v1) [fill=black, label=above:{1}] {};   
\node at (-2, -2) [vertex, scale=0.5] (v2) [fill=black, label=left:{2}]   {};
\node at (4, -2) [vertex, scale=0.5] (v3)  [fill=black, label=below:{3}] {};   
\node at (0, 0) [vertex, scale=0.5] (v4)  [label=above:{4}]{};
\node at (2, 0) [vertex, scale=0.5] (v5)  [label=above:{5}]{};  
\node at (1,-1) [vertex, scale=0.5] (v6)  [label=below:{6}] {};   
\node at (1,-4) {$H_{6}^{*}$};
 \end{tikzpicture}
\hfil
  \begin{tikzpicture}[scale=0.3, vertex/.style={circle, draw=black, fill=white} ]
\draw (6,0) ellipse (2.5 and 4);
\draw (0,2.3) ellipse (1.5 and 2.);
\draw (0,-2.3) ellipse (1.5 and 2);
\shadedraw[left color=black!50!white, right color=black!10!white, draw=black!10!white] (0,1.5)--(0, -1.5)--(6, 0)--cycle;
\shadedraw[left color=black!50!white, right color=black!10!white, draw=black!10!white] (6, 2.5)--(6, 0)--(0, 1.5)--cycle;

\shadedraw[left color=black!50!white, right color=black!10!white, draw=black!10!white] (6, 0)--(6, -2.5)--(0, -1.5)--cycle;
\shadedraw[left color=black!50!white, right color=black!10!white, draw=black!10!white] (6.5, -1)--(8, -1)--(7.25, 2)--cycle;

 \node at (0,1.5) [vertex, scale=0.5] (v1) [fill=black] {};
\node at (0,-1.5) [vertex, scale=0.5] (v2) [fill=black] {};
   \node at (0,-3) {$X$};
\node at (6,-3) {$Z$};
\node at (0,3) {$Y$};
\node at (3, -5.8) {$G_F$: $h_n(G_F)=\frac{9+\sqrt{3}}{9}+ o_n(1)$, at $|X|=|Y|=(\frac{3-\sqrt{3}}{6})n$.};
  \end{tikzpicture}
\end{center} 

To calculate the upper bounds of  $\pi(H_5^{*})$, we need  the following lemma. 
\begin{lemma}
Let $H_4^{\bullet  \bullet}=\{1, 2, 123, 124, 134\}$, then $\pi(\{K_3^{\bullet  \bullet \bullet}, H_4^{\bullet  \bullet}\})= 1+ \frac{\sqrt{3}}{9}.$
\end{lemma}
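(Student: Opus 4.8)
The plan is to prove the two bounds $\pi(\{K_3^{\bullet\bullet\bullet},H_4^{\bullet\bullet}\})\ge 1+\tfrac{\sqrt3}{9}$ and $\pi(\{K_3^{\bullet\bullet\bullet},H_4^{\bullet\bullet}\})\le 1+\tfrac{\sqrt3}{9}$ separately. For the lower bound I would take $G_E$, i.e. the blow-up of the $\{1,3\}$-graph with loops $H_E$ on two vertices $\{x,y\}$ with edge set $\{x,\,xxy,\,yyy\}$, so $V(G_E)=X\sqcup Y$ with $|X|=\bigl(\tfrac12+\tfrac{\sqrt3}{6}\bigr)n$ and $E(G_E)=\binom{X}{1}\cup\bigl(\binom{X}{2}\times\binom{Y}{1}\bigr)\cup\binom{Y}{3}$. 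One checks that $G_E$ is $\{K_3^{\bullet\bullet\bullet},H_4^{\bullet\bullet}\}$-free: every $3$-edge of $G_E$ meets $Y$, so no $3$-edge is entirely black (no $K_3^{\bullet\bullet\bullet}$), and $G_E$ contains no $3$-edge with exactly one black vertex, whereas any copy of $H_4^{\bullet\bullet}$ in a graph without an all-black $3$-edge necessarily uses a $3$-edge $\{a,c,d\}$ with $a$ black and $c,d$ white. Hence $\pi\ge\lim_n h_n(G_E)=\lambda(H_E)=\max_{t\in[0,1]}\bigl(t+3t^2(1-t)+(1-t)^3\bigr)=1+\tfrac{\sqrt3}{9}$, the maximum being at $t=\tfrac12+\tfrac{\sqrt3}{6}$.

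For the upper bound, let $G$ be any $\{K_3^{\bullet\bullet\bullet},H_4^{\bullet\bullet}\}$-free $\{1,3\}$-graph on $n$ vertices, let $X$ be its set of black vertices, $Y=V(G)\setminus X$, and $x=|X|/n$. Since $G$ is $K_3^{\bullet\bullet\bullet}$-free, no $3$-edge lies inside $X$, so every $3$-edge has $0$, $1$, or $2$ black vertices; let $t_0,t_1,t_2$ count these, so $h_n(G)=x+\tfrac{t_0+t_1+t_2}{\binom n3}$ and $t_0\le\binom{|Y|}{3}$. The core of the proof is a bound on $t_1+t_2$. Fix a black vertex $a$: let $L_a$ be its white link, the graph on $Y$ with $\{c,d\}\in E(L_a)$ iff $acd\in E(G)$; write $e_a=|E(L_a)|$ for the number of type-$(1,2)$ edges through $a$, and for each black $b\ne a$ put $W(a,b)=\{w\in Y:abw\in E(G)\}$, so that $f_a=\sum_{b}|W(a,b)|$ counts the type-$(2,1)$ edges through $a$. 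The key observation — obtained by unwinding $H_4^{\bullet\bullet}$-freeness, and using $K_3^{\bullet\bullet\bullet}$-freeness to force the two non-black vertices of any potential copy of $H_4^{\bullet\bullet}$ to be white — is that each $W(a,b)$ is an independent set of $L_a$. Therefore, with $\alpha_a:=\alpha(L_a)$, we get $f_a\le(|X|-1)\,\alpha_a$ and $e_a\le\binom{|Y|}{2}-\binom{\alpha_a}{2}$.

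Summing over black $a$ gives $t_1+t_2=\sum_{a\in X}\bigl(e_a+\tfrac12 f_a\bigr)\le\sum_{a\in X}\Bigl(\binom{|Y|}{2}-\binom{\alpha_a}{2}+\tfrac12(|X|-1)\alpha_a\Bigr)$, and I would then maximize the summand over $\alpha_a\in\{0,1,\dots,|Y|\}$. When $x>2/3$ the summand is nondecreasing in $\alpha_a$, so the maximum is at $\alpha_a=|Y|$ and equals $\tfrac12(|X|-1)|Y|$, yielding $t_1+t_2\le\binom{|X|}{2}|Y|$ and hence $h_n(G)\le x+(1-x)^3+3x^2(1-x)+o(1)$; when $x\le 2/3$ the maximum is at $\alpha_a\approx\tfrac12(|X|-1)$ and equals $\tfrac12|Y|^2+\tfrac18|X|^2+O(n)$, yielding $h_n(G)\le x+(1-x)^3+3x(1-x)^2+\tfrac34x^3+o(1)$. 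A short calculus check then shows that the first function of $x$ has maximum $1+\tfrac{\sqrt3}{9}$ on $[2/3,1]$, attained at $x=\tfrac12+\tfrac{\sqrt3}{6}$, while the second is at most $\tfrac{31}{27}<1+\tfrac{\sqrt3}{9}$ on $[0,2/3]$. Combining with the lower bound gives $\pi(\{K_3^{\bullet\bullet\bullet},H_4^{\bullet\bullet}\})=1+\tfrac{\sqrt3}{9}$.

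The hard part, I expect, is not the independence observation itself (which is just bookkeeping from the definitions) but checking that the rather crude per-vertex estimates above are already tight enough: the inequalities $f_a\le(|X|-1)\alpha_a$ and $e_a\le\binom{|Y|}{2}-\binom{\alpha_a}{2}$ only use the independence number of $L_a$ and discard all information about how the $|X|-1$ sets $W(a,b)$ overlap inside $Y$, so it is a priori unclear they suffice. What makes it work is that the resulting one-parameter optimization peaks exactly at the value achieved by $G_E$ (where all these inequalities are equalities), with the regime boundary $x=2/3$ being precisely the point at which the unconstrained optimal choice $\alpha_a=\tfrac12(|X|-1)$ would exceed $|Y|$. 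I would double-check the routine calculus (monotonicity of the summand in $\alpha_a$, the two maxima in $x$) and the reduction of $H_4^{\bullet\bullet}$-freeness to the independence statement, since those are the places a careless case or sign could slip in.
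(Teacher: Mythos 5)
Your proposal is correct, and both bounds are established the same way the paper does at the top level: the lower bound comes from the construction $G_E$ (blow-up of $H_E$ with edges $\{x,xxy,yyy\}$), and the upper bound converts $H_4^{\bullet\bullet}$-freeness into the statement that two white common co-neighbors of a black pair force a missing $(1,2)$-type edge. Where you differ is the bookkeeping for the upper bound: the paper works per black \emph{pair} $(i,j)$, introduces the codegrees $d_{ij}$ and the global density $y$ of $(2,1)$-type edges, and applies Cauchy--Schwarz to get $h_n(G)\le x+(1-x)^3+3x^2(1-x)y+3x(1-x)^2(1-y^2)+o(1)$, which it then optimizes in $y$; you work per black \emph{vertex} $a$, observe that each $W(a,b)$ is independent in the white link $L_a$, and trade $f_a\le(|X|-1)\alpha_a$ against $e_a\le\binom{|Y|}{2}-\binom{\alpha_a}{2}$. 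These turn out to be two parametrizations of the same estimate: the paper's bound at $y=1$ is your $x\ge 2/3$ regime, and its interior optimum $y^*=\frac{x}{2(1-x)}$ (relevant when $x<2/3$) gives exactly your second expression $x+(1-x)^3+3x(1-x)^2+\frac34x^3$, whose maximum $\frac{31}{27}$ is safely below $1+\frac{\sqrt3}{9}$. One thing your version buys is that this case split is explicit, whereas the paper compresses it into the assertion that the maximum is attained at $y=1$ (literally true only for $x\ge 2/3$, though harmless for the final bound); your verification of $H_4^{\bullet\bullet}$-freeness of $G_E$ is also spelled out rather than asserted. The independence-set reduction and the two calculus checks you flag are indeed the only places needing care, and they go through as you describe.
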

\begin{proof} 
To see the lower bound, observe that both  $H_4^{\bullet  \bullet}$ and $K_3^{\bullet  \bullet \bullet}$ are not contained in $G_E$. 

To see the upper bound, let 
$G$ represent a $\{K_3^{\bullet  \bullet \bullet}, H_4^{\bullet  \bullet}\}$-free graph on $n$ vertices, let $X\subseteq V(G)$ be the set of all $1$-edges of $G$, $|X|=xn$ for some real $x\in (0, 1)$, and let $Y=V(G)\setminus X$, then $|Y|=(1-x)n$.  To forbidden $K_3^{\bullet  \bullet \bullet}$, there is no $3$-edge of form ${X\choose 3}$. Let $y$ be the density of $3$-edges in $G$ among all edges of form ${X\choose 2}\times {Y\choose 1}$.
For any pair of vertices $(i, j)$ in $X$, denote $d_{ij}$ as the number of vertices $k\in Y$ so that  $\{ijk\}\in E(G)$. 
Then
$$y=\frac{\sum_{(i, j)\in {X\choose 2}}d_{ij}}{{|X|\choose 2}\times {|Y|\choose 1}}. $$

To forbidden $H_4^{\bullet  \bullet}$,  for each pair of vertices $(i, j)\in {X\choose 2}$,  if $ijk$ and $ijl$ are in $E(G)$, neither $kli$ nor $klj$ can be contained in $E(G)$. 
Thus for every pair of $\{i, j\}$,  the number of $3$-edges not shown in $G$ is at least $2{d_{ij}\choose 2}$. 
Let $M$ be the total number of $3$-edges of form ${X\choose 1}\times {Y \choose 2}$ not shown in $G$, then by Cauchy-Schwarz inequality, we have
\begin{align*}
M&\geq \frac{\sum_{i,j\in {X\choose 2}}2{d_{ij}\choose 2}}{|X|} \\
& \geq\ \frac{\left(\sum_{i,j\in X}d_{ij}\right)^2}{{xn\choose 2}(xn)}-\frac{\sum_{i,j\in X}d_{ij}}{xn}\\
& \geq \frac{1}{2}y^2x(1-x)^2n^3- \frac{1}{2}yx(1-x)n^2.
\end{align*}
Thus  $$h_n(G)\leq x+ (1-x)^3+ 3x^2(1-x)y + 3x(1-x)^2 -3x(1-x)^2y^2+ o_n(1). $$
A simple calculation can show that $h_n(G)$ achieves maximum value at $y=1$, which implies that 
for any positive integer $n$, any extremal $\{1, 3\}$-graph of $\{K_3^{\bullet  \bullet \bullet}, H_4^{\bullet  \bullet}\}$ is $H_E$-colorable where $E(H_E)=\{x, xxy, yyy\}$.
Therefore, $\pi(\{K_3^{\bullet  \bullet \bullet}, H_4^{\bullet  \bullet}\})=\lim_{n\to \infty} h_n(G_n)  \leq \lambda(H_E)=1+ \frac{\sqrt{3}}{9}$. The result follows. 
\end{proof}

\begin{lemma}\label{H_5^{*}}
$\pi(H_5^{*})=1+ \frac{\sqrt{3}}{9}$. 
\end{lemma}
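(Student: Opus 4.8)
The plan is to sandwich $H_5^{*}$ between the hypergraph $\{K_3^{\bullet\bullet\bullet}, H_4^{\bullet\bullet}\}$ and an appropriate blow-up of some small $\{1,3\}$-graph with loops, so that the Lagrangian computation already carried out gives both bounds. First I would establish the lower bound $\pi(H_5^{*}) \geq 1 + \frac{\sqrt 3}{9}$ by exhibiting the construction $G_E$ (drawn above) and checking that $H_5^{*} \not\subseteq G_E$: since in $G_E$ the only $3$-edges are of the form $\binom{X}{1}\times\binom{Y}{2}$ together with (at most) $\binom{Y}{3}$, while the two black vertices $2,3$ of $H_5^{*}$ lie in two distinct $3$-edges ($124$ and $135$) that share only the apex vertex $1$, one verifies $H_5^{*}$ cannot be embedded, so $\pi(H_5^{*}) \geq \lim_n h_n(G_E) = 1 + \frac{\sqrt 3}{9}$.

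For the upper bound, the key observation is that any $H_5^{*}$-free $\{1,3\}$-graph is in particular $\{K_3^{\bullet\bullet\bullet}, H_4^{\bullet\bullet}\}$-free: indeed $K_3^{\bullet\bullet\bullet} = \{1,2,3,123\}$ and $H_4^{\bullet\bullet} = \{1,2,123,124,134\}$ each contain $H_5^{*}$'s forbidden pattern only if $H_5^{*}\subseteq K_3^{\bullet\bullet\bullet}$ or $H_5^{*}\subseteq H_4^{\bullet\bullet}$ — and here I must be careful, since that containment goes the wrong way. So instead I would argue directly: I claim $H_5^{*}$ is a subgraph of a blow-up of one of $K_3^{\bullet\bullet\bullet}$ or $H_4^{\bullet\bullet}$, which would let me invoke the Squeeze Theorem together with the previous lemma. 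Concretely, I would exhibit a homomorphism from $H_5^{*}$ into (a blow-up of) $H_4^{\bullet\bullet}$: send $4\mapsto$ a new vertex, and map $\{1,2,3,4,5\}$ appropriately so that $\{2\},\{3\}$ land on the two black vertices and $124, 145, 135$ land on the three $3$-edges of $H_4^{\bullet\bullet}$. If such a homomorphism exists then $H_5^{*}$ is $H_4^{\bullet\bullet}$-colorable, hence $\pi(H_5^{*}) \leq \pi(H_4^{\bullet\bullet})$; but the cleaner route is to show $\pi(H_5^{*}) \leq \pi(\{K_3^{\bullet\bullet\bullet}, H_4^{\bullet\bullet}\}) = 1+\frac{\sqrt 3}{9}$ by checking that any large extremal $H_5^{*}$-free graph must avoid both $K_3^{\bullet\bullet\bullet}$ and $H_4^{\bullet\bullet}$ after a cleaning/symmetrization step, or alternatively that $H_5^{*}$-colorability is implied by the relevant blow-up structure.

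The most robust approach, and the one I would actually write down, mirrors the proof of Lemma~\ref{13H5a} and the preceding lemma: take an $H_5^{*}$-free $\{1,3\}$-graph $G$ on $n$ vertices, let $X$ be its set of $1$-edges with $|X| = xn$ and $Y = V(G)\setminus X$. The forbidden subgraph $H_5^{*}$ has $1$-edges at vertices $2,3$ and $3$-edges $124, 135, 145$; since $2$ and $3$ are black and lie in $3$-edges together with the black-free triple pattern, forbidding $H_5^{*}$ forces (i) essentially no $3$-edge with two black vertices beyond what $K_3^{\bullet\bullet\bullet}$-freeness already gives — i.e. one reduces to the configuration studied in the previous lemma — and (ii) a link condition on pairs in $X$ of exactly the type "$ijk, ijl \in E \Rightarrow kli, klj \notin E$" that yields, via Cauchy–Schwarz, the same quantity $M$ bounding the missing $3$-edges of form $\binom{X}{1}\times\binom{Y}{2}$. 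The resulting inequality for $h_n(G)$ is identical to the one in the previous lemma and is maximized at $y=1$, giving $h_n(G) \leq \lambda(H_E) = 1 + \frac{\sqrt 3}{9}$, where $E(H_E) = \{x, xxy, yyy\}$.

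The main obstacle is justifying step (i) cleanly: one must argue that $H_5^{*}$-freeness (a single forbidden graph) really does rule out the $K_3^{\bullet\bullet\bullet}$ and $H_4^{\bullet\bullet}$ patterns in the relevant density regime, rather than merely being implied by their joint exclusion — the containment direction is genuinely delicate, because $H_5^{*}$ is larger than each of $K_3^{\bullet\bullet\bullet}$ and $H_4^{\bullet\bullet}$. I expect this to require either a direct embedding argument (if $G$ contains many copies of, say, $H_4^{\bullet\bullet}$, then using the abundance of black vertices and their links one locates a copy of $H_5^{*}$), or an appeal to the homomorphism/blow-up structure: $H_5^{*}$ is a subgraph of a blow-up of $H_6^{\{1,3\}}$, and one checks that in the extremal regime every $H_5^{*}$-free graph is forced onto the structure of $H_E$. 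Once that structural reduction is in place, the Lagrangian bound $\lambda(H_E) = 1 + \frac{\sqrt 3}{9}$ (computed exactly as for $\lambda(H_B)$, $\lambda(H_C)$ earlier) closes the argument, and combined with the lower bound from $G_E$ we conclude $\pi(H_5^{*}) = 1 + \frac{\sqrt 3}{9}$.
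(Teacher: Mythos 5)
Your lower bound is in substance the paper's, but your description of $G_E$ is wrong: $G_E$ is the blow-up of $H_E$ with $E(H_E)=\{x,xxy,yyy\}$, so its $3$-edges are those of the form $\binom{X}{2}\times\binom{Y}{1}$ together with $\binom{Y}{3}$, not $\binom{X}{1}\times\binom{Y}{2}$. The correct obstruction is this: the three black vertices $1,2,3$ of $H_5^{*}$ must land in $X$, and then among the edges $124,135,145$ one necessarily has either three images in $X$ or one image in $X$ and two in $Y$, neither of which is an edge type of $G_E$; so $H_5^{*}\not\subseteq G_E$ and $\pi(H_5^{*})\geq 1+\frac{\sqrt{3}}{9}$ indeed holds.

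The upper bound is where the genuine gap lies, and you flag it yourself as ``the main obstacle''; none of your proposed routes closes it. Colorability into a \emph{single} member only yields $\pi(H_5^{*})\leq\pi(H_4^{\bullet\bullet})$ or $\pi(H_5^{*})\leq\pi(K_3^{\bullet\bullet\bullet})=1+\frac{2\sqrt{3}}{9}$, neither of which gives $1+\frac{\sqrt{3}}{9}$ (and the Squeeze Theorem is unavailable, since neither $K_3^{\bullet\bullet\bullet}$ nor $H_4^{\bullet\bullet}$ is a sub-graph of $H_5^{*}$). Rerunning the counting proof of the preceding lemma on an $H_5^{*}$-free graph fails at the first step: $H_5^{*}$-freeness does not forbid a $3$-edge on three black vertices, nor the link condition ``$ijk,ijl\in E\Rightarrow kli,klj\notin E$'', because those patterns are strictly smaller than $H_5^{*}$; and it is simply not true that an extremal $H_5^{*}$-free graph must avoid $K_3^{\bullet\bullet\bullet}$ and $H_4^{\bullet\bullet}$ as sub-graphs. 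The missing idea --- the paper's one-line argument, already used in the proof of Theorem \ref{degetheorem} with the family $\{K_3^{\bullet\bullet},G_4^{\bullet}\}$ --- is \emph{joint} colorability: $H_5^{*}$ is colorable by both $K_3^{\bullet\bullet\bullet}$ (via the $3$-partition $\{1\},\{2,5\},\{3,4\}$, all targets being black) and $H_4^{\bullet\bullet}$ (send $1\mapsto 1$, $2\mapsto 2$, $3\mapsto 2$, $4\mapsto 3$, $5\mapsto 4$, so $124\mapsto 123$, $135\mapsto 124$, $145\mapsto 134$). If $H$ is colorable by every member of a finite family $\mathcal{F}$, then with $s=|V(H)|$ we have $H\subseteq F(s)$ for each $F\in\mathcal{F}$, so every $H$-free graph is $\mathcal{F}(s)$-free, and Theorem \ref{Blowupinvariant} gives $\pi(H)\leq\pi(\mathcal{F}(s))=\pi(\mathcal{F})$. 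Applying this with $\mathcal{F}=\{K_3^{\bullet\bullet\bullet},H_4^{\bullet\bullet}\}$ and the lemma $\pi(\{K_3^{\bullet\bullet\bullet},H_4^{\bullet\bullet}\})=1+\frac{\sqrt{3}}{9}$ yields the upper bound, which together with $G_E$ finishes the proof.
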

\begin{proof}
On one hand, $H_5^{*}$ is not contained in $G_E$, then $\pi(H_5^{*})\geq 1+ \frac{\sqrt{3}}{9}$. On the other hand, $H_5^{*}$ is $K_3^{\bullet  \bullet \bullet}$ and $H_4^{\bullet  \bullet}$-colorable, thus
 $\pi(H_5^{*})\leq \pi( \{K_3^{\bullet  \bullet \bullet}, H_4^{\bullet  \bullet}\}) \leq 1+ \frac{\sqrt{3}}{9}$. The result follows. 
\end{proof}

\begin{corollary}\label{subgraphofH_5^*}
The proper sub-graphs of $H_5^{*}$ can be classified to two different sets: either the sub-graph contains $K_3^{\bullet \bullet}$ and is $K_3^{\bullet \bullet}$-colorable, in this case the Tur\'an density is $1+ \frac{\sqrt{3}}{18}$;  or the sub-graph does not contain $K_3^{\bullet \bullet}$, then it is $H_5^{\{1, 3\}}$-colorable, in this case the Tur\'an density is $1$.
\end{corollary}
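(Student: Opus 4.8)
The plan is to reduce the whole statement to one structural fact: \emph{every proper sub-graph $G\subsetneq H_5^{*}$ is $K_3^{\bullet \bullet}$-colorable}. Granting this, the dichotomy in the corollary is immediate from Theorem \ref{1and1.0962}. Indeed, if in addition $K_3^{\bullet \bullet}\not\subseteq G$, then Item 1 of that theorem says $G$ is $H_5^{\{1,3\}}$-colorable and $\pi(G)=1$; and if $K_3^{\bullet \bullet}\subseteq G$, then Item 2 says $\pi(G)=\pi(K_3^{\bullet \bullet})=1+\tfrac{\sqrt3}{18}$. So no further Tur\'an-density computation is needed once $K_3^{\bullet \bullet}$-colorability is in hand; the corollary is really just a finite verification feeding into Theorems \ref{degetheorem} and \ref{1and1.0962}. (Note that ``proper'' is essential: $H_5^{*}$ itself is \emph{not} $K_3^{\bullet \bullet}$-colorable, since any $3$-partite coloring making all three triples $124,135,145$ rainbow forces loops on all three colour classes, consistent with $\pi(H_5^{*})=1+\tfrac{\sqrt3}{9}$ from Lemma \ref{H_5^{*}}.)

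To prove the claim, I would first observe that any $G\subsetneq H_5^{*}$ is contained in $H_5^{*}-e$ for some edge $e\in E(H_5^{*})=\{\{1\},\{2\},\{3\},124,135,145\}$ (a proper sub-graph omits at least one edge), and that a sub-graph of a $K_3^{\bullet \bullet}$-colorable graph is again $K_3^{\bullet \bullet}$-colorable, by restricting the homomorphism. So it suffices to show that $H_5^{*}-e$ is $K_3^{\bullet \bullet}$-colorable for each of the six edges $e$. Next, the permutation $(2\,3)(4\,5)$ is an automorphism of $H_5^{*}$: it fixes $145$, swaps $124\leftrightarrow135$, and swaps $\{2\}\leftrightarrow\{3\}$. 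Hence the six edges fall into four orbits, with representatives $\{1\}$, $\{2\}$, $124$, and $145$, and it is enough to treat these four deletions.

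For each representative I would simply exhibit an explicit homomorphism $f$ to $K_3^{\bullet \bullet}=\{1,2,123\}$, recalling that colours $1,2$ are the ``black'' classes that may carry loops while $3$ is ``white'', so $f$ must send every surviving $1$-edge into $\{1,2\}$ and every surviving $3$-edge bijectively onto $\{1,2,3\}$. Writing $f$ as its vector of values on vertices $(1,2,3,4,5)$: for $H_5^{*}-145$ take $f=(1,2,2,3,3)$; for $H_5^{*}-124$ take $f=(1,1,2,2,3)$; for $H_5^{*}-\{2\}$ take $f=(1,3,2,2,3)$; for $H_5^{*}-\{1\}$ take $f=(3,2,1,1,2)$. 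In each case one checks instantly that the surviving loops land in $\{1,2\}$ and the two surviving triples are rainbow. This proves the claim, and with it the corollary.

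The only thing needing care — but not difficulty — is the bookkeeping: confirming the automorphism genuinely collapses the count to four cases, and, for each of the resulting sub-graphs, keeping track of whether $K_3^{\bullet \bullet}$ is present, equivalently whether two loop-vertices lie in a common surviving $3$-edge, since that choice is what selects the branch of the dichotomy. There is no shortcut here: for example $H_5^{*}-\{1\}$ and $H_5^{*}-\{2\}$ both retain all three triples, yet $H_5^{*}-\{1\}$ has no triple joining two loop-vertices (its loop-vertices are $2,3$, which share no edge) while $H_5^{*}-\{2\}$ does (its loop-vertices $1,3$ both lie in $135$), so these two single-edge deletions already land in opposite branches. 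Thus one must inspect all four orbit representatives, but each inspection is a one-line check and there is no analytic obstacle whatsoever.
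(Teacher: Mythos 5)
Your proposal is correct, and it takes the same route the paper intends: the paper states this corollary without a separate proof, treating it as an immediate consequence of Theorem \ref{1and1.0962} once one knows every proper sub-graph of $H_5^{*}$ is $K_3^{\bullet \bullet}$-colorable, which is exactly the finite check you carry out (your automorphism $(2\,3)(4\,5)$ and all four explicit homomorphisms verify correctly, as does the observation that $H_5^{*}$ itself is not $K_3^{\bullet \bullet}$-colorable). In short, you have simply made explicit the colorability verification the paper leaves to the reader.
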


To calculate the upper bounds of  $\pi(H_6^{*})$, we need  the following lemma. 
\begin{lemma}\label{H_{4}}
Let $H_{4}^{\bullet  \bullet  \bullet}=\{1, 2, 3, 124, 134, 234\}$, then
 $\pi(\{K_3^{\bullet  \bullet \bullet}, H_{4}^{\bullet  \bullet  \bullet}\})=1+ \frac{\sqrt{3}}{9}$. 
\end{lemma}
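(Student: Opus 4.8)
The plan is to prove the two inequalities $\pi(\{K_3^{\bullet\bullet\bullet},H_4^{\bullet\bullet\bullet}\})\ge 1+\tfrac{\sqrt3}{9}$ and $\pi(\{K_3^{\bullet\bullet\bullet},H_4^{\bullet\bullet\bullet}\})\le 1+\tfrac{\sqrt3}{9}$ separately. For the lower bound I would invoke the construction $G_F$ already drawn above. Its black vertices split into two blocks $X$ and $Y$, its white vertices form the block $Z$, and every $3$-edge of $G_F$ has a vertex in $Z$; in particular no triple of $G_F$ is all-black, so $G_F$ is $K_3^{\bullet\bullet\bullet}$-free. Moreover, for each white vertex $d$, the pairs of black vertices forming a $3$-edge with $d$ are exactly the pairs with one endpoint in $X$ and one in $Y$, i.e.\ a complete bipartite (hence triangle-free) graph on the black vertices; as explained in the next paragraph this is precisely what excludes $H_4^{\bullet\bullet\bullet}$. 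Since $h_n(G_F)=\tfrac{9+\sqrt3}{9}+o_n(1)=1+\tfrac{\sqrt3}{9}+o_n(1)$, this yields the lower bound.

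For the upper bound, I would take any $\{K_3^{\bullet\bullet\bullet},H_4^{\bullet\bullet\bullet}\}$-free $\{1,3\}$-graph $G$ on $n$ vertices, let $X$ be its set of black vertices with $|X|=xn$, and put $Y=V(G)\setminus X$. Forbidding $K_3^{\bullet\bullet\bullet}$ means no $3$-element subset of $X$ is an edge, so every $3$-edge lies in $\binom{X}{2}\times\binom{Y}{1}$, in $\binom{X}{1}\times\binom{Y}{2}$, or in $\binom{Y}{3}$. For $d\in Y$ let $L_d$ be the graph on vertex set $X$ with $\{a,b\}\in E(L_d)\iff abd\in E(G)$. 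The crucial step is to argue that every $L_d$ is triangle-free: in any embedding $f$ of $H_4^{\bullet\bullet\bullet}$ into $G$, the images $f(1),f(2),f(3)$ are black, and $f(4)$ must be \emph{non}-black, for otherwise $\{f(1),f(2),f(4)\}$ would be an all-black $3$-edge, a forbidden copy of $K_3^{\bullet\bullet\bullet}$; hence a triangle $\{a,b,c\}$ in some $L_d$ would give a copy of $H_4^{\bullet\bullet\bullet}$ with $f(4)=d\in Y$. By Mantel's theorem $|E(L_d)|\le |X|^2/4$, so the number of $3$-edges of type $\binom{X}{2}\times\binom{Y}{1}$ is at most $|Y|\,|X|^2/4$; bounding the other two types trivially by $|X|\binom{|Y|}{2}$ and $\binom{|Y|}{3}$ gives
\begin{align*}
h_n(G)&\le x+\tfrac32 x^2(1-x)+3x(1-x)^2+(1-x)^3+o_n(1)\\
&=1+x-\tfrac32 x^2+\tfrac12 x^3+o_n(1).
\end{align*}
A one-variable optimization shows the right-hand side is maximized at $x=1-\tfrac{\sqrt3}{3}$ with value $1+\tfrac{\sqrt3}{9}$; equivalently this bound is $\lambda(H_F)$ for the three-vertex loop $\{1,3\}$-graph $H_F=\{u,u',uu'v,uvv,u'vv,vvv\}$ of which $G_F$ is a blow-up. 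Combining the two bounds proves the lemma.

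I expect the main obstacle to be exactly the triangle-free claim for the links $L_d$, and within it the observation that $K_3^{\bullet\bullet\bullet}$-freeness forces the apex (vertex $4$) of any embedded $H_4^{\bullet\bullet\bullet}$ into $Y$; once this is in place the rest is Mantel's theorem plus routine calculus. In contrast to the analysis of $H_4^{\bullet\bullet}$, here no Cauchy--Schwarz averaging over the $\binom{X}{1}\times\binom{Y}{2}$ edges is needed, since the trivial count of those edges already matches what $G_F$ achieves. I would also double-check carefully that $G_F$ avoids \emph{both} forbidden subgraphs, since $G_F$ arises naturally from the Mantel-extremal configuration and it would be easy to overlook a hidden copy.
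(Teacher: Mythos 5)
Your proof is correct and follows essentially the same route as the paper: use $K_3^{\bullet\bullet\bullet}$-freeness to kill all-black triples, observe that the black-pair link of each white vertex must be triangle-free since a triangle there produces a copy of $H_4^{\bullet\bullet\bullet}$, bound the $\binom{X}{2}\times\binom{Y}{1}$ edges accordingly (you via Mantel on each link, the paper via an averaged ``density $>\tfrac12$ forces a triangle'' argument), take all remaining edge types, and maximize the same cubic $1+x-\tfrac32x^2+\tfrac12x^3$ at $x=1-\tfrac{\sqrt{3}}{3}$, with $G_F$ supplying the matching lower bound. The only differences are cosmetic: your per-link Mantel bound replaces the paper's averaging step (and is in fact slightly cleaner), and you spell out the verification that $G_F$ avoids both forbidden graphs, which the paper merely asserts.
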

\begin{proof}
To see the lower bound, observe that both $K_3^{\bullet  \bullet \bullet}$ and $H_{4}^{\bullet  \bullet  \bullet}$ are not contained in $G_F$. 
%
%
To see the upper bound, let 
$G$ represent a $\{K_3^{\bullet  \bullet \bullet}, H_4^{\bullet  \bullet \bullet }\}$-free graph on $n$ vertices, let $X\subseteq V(G)$ be the set of all $1$-edges of $G$, $|X|=xn$ for some real $x\in (0, 1)$, let $Y=V(G)\setminus X$, then $|Y|=(1-x)n$.  To forbidden $K_3^{\bullet  \bullet \bullet}$, there is no $3$-edge of form ${X\choose 3}$.

Let $y$ be the density of $3$-edges in $G$ among all edges of form ${X\choose 2}\times {Y\choose 1}$. For each $i\in Y$,  
let $D_i=\{\{j, k\}\in {X \choose 2} |\  ijk\in E(G)\}$, denote $d_i=|D_i|$. 
Then
$$y=\frac{\sum_{i\in Y}d_{i}}{{|X|\choose 2}\times {|Y|\choose 1}}. $$
Suppose  $y> \frac{1}{2}$, then there exists $i\in Y$, such that $d_i> \frac{1}{2} {|X|\choose 2}$. By the fact that the Tur\'an density of a triangle graph is $\frac{1}{2}$,   there must exist a triple $\{j, k, l\}\in {X\choose 3}$ such that $\{ijk, ijl, ikl\}\subseteq E(G)$, which is a copy of $H_{4}^{\bullet  \bullet  \bullet}$, a contradiction. 
 Note that the existence of $3$-edge of form ${Y \choose 3}$ or ${X \choose 1}\times {Y \choose 2}$ does not result in an occurrence of $H_{4}^{\bullet  \bullet  \bullet}$ or $K_3^{\bullet  \bullet \bullet}$ in $G$. Thus we can take all such edges. 
Thus we have $y\leq \frac{1}{2}$, then
$$h_n(G)\leq x+ (1-x)^3+ \frac{3}{2}x^2(1-x) + 3x(1-x)^2+o_n(1),$$ 
which achieves the maximum $1+ \frac{\sqrt{3}}{9}$ at $x=1-\frac{\sqrt{3}}{3}$. 

Hence, we have $\pi( \{K_3^{\bullet  \bullet \bullet}, H_4^{\bullet  \bullet \bullet}\})=\lim_{n\to \infty} h_n(G_n)  \leq 1+ \frac{\sqrt{3}}{9}$. The result follows. 
\end{proof}

\begin{lemma}\label{H_6^{*}}
$\pi(H_6^{*})=1+ \frac{\sqrt{3}}{9}$. 
\end{lemma}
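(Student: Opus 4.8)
The plan is to mirror the proof of Lemma~\ref{H_5^{*}} exactly, using Lemma~\ref{H_{4}} as the uniform-density input in place of the lemma bounding $\pi(\{K_3^{\bullet\bullet\bullet}, H_4^{\bullet\bullet}\})$. First I would establish the lower bound: the construction $G_F$ (with $|X|=|Y|=\frac{3-\sqrt3}{6}n$) has edge density $\frac{9+\sqrt3}{9}+o_n(1)=1+\frac{\sqrt3}{9}+o_n(1)$, and the earlier discussion already records that $H_6^{*}=\{1,2,3,124,135,236\}$ is not contained in $G_F$; hence $\pi(H_6^{*})\ge 1+\frac{\sqrt3}{9}$.

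For the upper bound, the key observation is that $H_6^{*}$ is both $K_3^{\bullet\bullet\bullet}$-colorable (it is $3$-partite, which is visible from the vertex partition $\{1,4\},\{2,5\},\{3,6\}$ used to draw $G_F$, or just from the definition of $H_6^{\{1,3\}}$ of which it is a subgraph) and $H_4^{\bullet\bullet\bullet}$-colorable. For the latter I would exhibit an explicit homomorphism $g\colon V(H_6^{*})\to V(H_4^{\bullet\bullet\bullet})$ where $H_4^{\bullet\bullet\bullet}=\{1,2,3,124,134,234\}$: sending the three black vertices $1,2,3$ of $H_6^{*}$ to the three black vertices $1,2,3$ of $H_4^{\bullet\bullet\bullet}$ and the three white vertices $4,5,6$ all to the white vertex $4$, one checks $124\mapsto 124$, $135\mapsto 134$, $236\mapsto 234$, and the $1$-edges map to $1$-edges, so $g$ is a hypergraph homomorphism. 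Then by the Lemma, $\pi(H_6^{*})\le \pi(\{K_3^{\bullet\bullet\bullet}, H_4^{\bullet\bullet\bullet}\}) = 1+\frac{\sqrt3}{9}$, and combining with the lower bound gives equality.

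The only real content is verifying the two colorability claims, i.e.\ writing down the homomorphisms and checking each edge; everything else is quoting Lemma~\ref{H_{4}} and the already-stated non-containment of $H_6^{*}$ in $G_F$. I do not anticipate a genuine obstacle here: the structure is entirely parallel to Lemma~\ref{H_5^{*}}, with $H_4^{\bullet\bullet\bullet}$ (three black vertices) replacing $H_4^{\bullet\bullet}$ (two black vertices) to accommodate the extra black vertex and the edge $236$ in $H_6^{*}$. If one wanted to be careful, the single point to double-check is that the white vertices $4,5,6$ of $H_6^{*}$ are pairwise non-adjacent in a way that could force them to distinct images — but since $H_6^{*}$ has no $3$-edge lying entirely among $\{4,5,6\}$ nor any $3$-edge using two of them, collapsing all of $4,5,6$ to a single white vertex is harmless. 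Hence the proof reads: lower bound from $G_F$; upper bound from $H_6^{*}$ being $\{K_3^{\bullet\bullet\bullet}, H_4^{\bullet\bullet\bullet}\}$-colorable together with Lemma~\ref{H_{4}}; the result follows.
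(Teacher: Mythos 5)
Your proof is correct and is essentially the paper's own argument: lower bound from $H_6^{*}\not\subseteq G_F$, upper bound from $H_6^{*}$ being both $K_3^{\bullet\bullet\bullet}$-colorable and $H_4^{\bullet\bullet\bullet}$-colorable together with the lemma giving $\pi(\{K_3^{\bullet\bullet\bullet},H_4^{\bullet\bullet\bullet}\})=1+\frac{\sqrt{3}}{9}$, and your explicit homomorphism sending $4,5,6\mapsto 4$ is valid since no $3$-edge of $H_6^{*}$ contains two of $\{4,5,6\}$. One small correction: $\{1,4\},\{2,5\},\{3,6\}$ is not a legitimate tripartition (the edge $124$ meets $\{1,4\}$ twice); use instead $\{1,6\},\{2,5\},\{3,4\}$, against which $124$, $135$, $236$ are all transversal, so the $K_3^{\bullet\bullet\bullet}$-colorability claim (which also follows from $H_6^{*}\subseteq H_6^{\{1,3\}}$, as you note) still stands.
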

\begin{proof}
On one hand, $H_6^{*}$ is not contained in $G_F$, then $\pi(H_6^{*})\geq 1+ \frac{\sqrt{3}}{9}$. On the other hand, $H_6^{*}$ is $K_3^{\bullet  \bullet \bullet}$ and $H_4^{\bullet  \bullet \bullet}$-colorable, thus
 $\pi(H_6^{*})\leq \pi( \{K_3^{\bullet  \bullet \bullet}, H_4^{\bullet  \bullet \bullet}\}) \leq 1+ \frac{\sqrt{3}}{9}$. The result follows. 
\end{proof}

Since we are considering all sub-graphs of $H_6^{\{1, 3\}}$, we start this by looking at the larger sub-graphs then the smaller sub-graphs. 
Using above two lemmas, 
we are able to determine the Tur\'an density for a list of sub-graphs of $H_6^{\{1, 3\}}$. 
Now we let $H$ be a sub-graph of  $H_6^{\{1, 3\}}$, we have: 
\begin{enumerate}
\item If $H$ is $K_3^{\bullet \bullet}$-colorable, thus $\pi(H)=1$ or $\pi(H)=1+ \frac{\sqrt{3}}{18}$. 
\item If $H$ is not above case, then $H$ must contain all $1$-edges: $1, 2, 3$, and none of them is isolated. Then we have several different cases:
 \begin{enumerate}
\item Suppose $H$ is obtained from $H_6^{\{1, 3\}}$ by removing one $3$-edge consisting of two black vertices and one white vertex (say $236$ or equivalence),
then one can check $H$ is $H_5^{*}$-colorable.  Note that $H_5^{*}\subseteq H$, by Lemmas \ref{H_5^{*}}, we have
$\pi(H)= 1+ \frac{\sqrt{3}}{9}$.  Similarly,  for any sub-graph $H'$ of $H$, if $H'$ contains $H_5^{*}$ or a $H_5^{*}$-colorable graph as sub-graph, then $\pi(H')= 1+ \frac{\sqrt{3}}{9}$. 
If $H'$ is not above case, by trial and error, there is only one situation:
 $H'$ contains the following sub-graph $H^{*}$ (or its equivalence) which is not contained in $G_E$. Thus $\pi(H')= 1+ \frac{\sqrt{3}}{9}$: 
\begin{center}
\begin{tikzpicture}[scale=0.3, vertex/.style={draw,shape=circle,fill=white,minimum size=1mm}]
\shadedraw[left color=black!50!white, right color=black!10!white, draw=black!10!white] (1, 2)-- (-2, -2)--(0, 0)--cycle; 
\shadedraw[left color=black!50!white, right color=black!10!white, draw=black!10!white] (4, -2)--(2, 0)--(1,-1)--cycle; 
\shadedraw[left color=black!50!white, right color=black!10!white, draw=black!10!white] (0, 0)--(2, 0)--(1,-1)--cycle; 

\node at (1, 2) [vertex, scale=0.5] (v1) [fill=black, label=above:{1}] {};   
\node at (-2, -2) [vertex, scale=0.5] (v2) [fill=black, label=left:{2}]   {};
\node at (4, -2) [vertex, scale=0.5] (v3)  [fill=black, label=below:{3}] {};   
\node at (0, 0) [vertex, scale=0.5] (v4)  [label=above:{4}]{};
\node at (2, 0) [vertex, scale=0.5] (v5)  [label=above:{5}]{};  
\node at (1,-1) [vertex, scale=0.5] (v6)  [label=below:{6}] {};   
\node at (1,-4) {$H^{*}$};
 \end{tikzpicture}
 \end{center}
\item 
Let $H$ be sub-graph of $H_6^{\{1, 3\}}$ by removing edges $145$, $246$, $456$ and $356$, the resulting graph is $H_6^{*}$.  By Lemma \ref{H_6^{*}}, $\pi(H)=\pi(H_6^{*})= 1+ \frac{\sqrt{3}}{9}$. 

\end{enumerate}

\item  The following graphs $H_6^{\{1, 3\}}$,  $H^{a}_6, H^{b}_6, , H^{c}_6,  H^{d}_6$ and $H^{e}_6$  are unsolved.
  We conjecture that the extremal configuration of $H_6^{\{1, 3\}}$ and $H^{b}_6$ is Construction $G_D$: 
thus we conjecture $\pi(H_6^{\{1, 3\}})=\pi(H^{b}_6)=\frac{4}{3}$. 

\begin{center}
\begin{tikzpicture}[scale=0.3, vertex/.style={draw,shape=circle,fill=white,minimum size=1mm}]
\shadedraw[left color=black!50!white, right color=black!10!white, draw=black!10!white] (1, 2)--(-2, -2)--(0, 0)--cycle;
\shadedraw[left color=black!50!white, right color=black!10!white, draw=black!10!white] (1, 2)--(0, 0)--(2, 0)--cycle;
\shadedraw[left color=black!50!white, right color=black!10!white, draw=black!10!white] (1, 2)--(4, -2)--(2, 0)--cycle;
\shadedraw[left color=black!50!white, right color=black!10!white, draw=black!10!white] (-2, -2)--(0, 0)--(1,-1)--cycle;
\shadedraw[left color=black!50!white, right color=black!10!white, draw=black!10!white] (4, -2)--(2, 0)--(1,-1)--cycle;
\shadedraw[left color=black!50!white, right color=black!10!white, draw=black!10!white] (-2, -2)--(4, -2)--(1,-1)--cycle;
\shadedraw[left color=black!50!white, right color=black!10!white, draw=black!10!white] (0, 0)--(2, 0)--(1,-1)--cycle;

\node at (1, 2) [vertex, scale=0.5] (v1) [fill=black, label=above:{1}] {};   
\node at (-2, -2) [vertex, scale=0.5] (v2) [fill=black, label=left:{2}]   {};
\node at (4, -2) [vertex, scale=0.5] (v3)  [fill=black, label=below:{3}] {};   
\node at (0, 0) [vertex, scale=0.5] (v4)  [label=above:{4}]{};
\node at (2, 0) [vertex, scale=0.5] (v5)  [label=above:{5}]{};  
\node at (1,-1) [vertex, scale=0.5] (v6)  [label=below:{6}] {};   
\node at (1,-4) {$H_6^{\{1,3\}}$};
\end{tikzpicture}
\hfil
  \begin{tikzpicture}[scale=0.3, vertex/.style={draw,shape=circle,fill=white,minimum size=1mm}]
\shadedraw[left color=black!50!white, right color=black!10!white, draw=black!10!white] (1, 2)--(-2, -2)--(0, 0)--cycle;
\shadedraw[left color=black!50!white, right color=black!10!white, draw=black!10!white] (1, 2)--(0, 0)--(2, 0)--cycle;
\shadedraw[left color=black!50!white, right color=black!10!white, draw=black!10!white] (1, 2)--(4, -2)--(2, 0)--cycle;
\shadedraw[left color=black!50!white, right color=black!10!white, draw=black!10!white] (-2, -2)--(0, 0)--(1,-1)--cycle;
\shadedraw[left color=black!50!white, right color=black!10!white, draw=black!10!white] (4, -2)--(2, 0)--(1,-1)--cycle;
\shadedraw[left color=black!50!white, right color=black!10!white, draw=black!10!white] (-2, -2)--(4, -2)--(1,-1)--cycle;

\node at (1, 2) [vertex, scale=0.5] (v1) [fill=black, label=above:{1}] {};   
\node at (-2, -2) [vertex, scale=0.5] (v2) [fill=black, label=left:{2}]   {};
\node at (4, -2) [vertex, scale=0.5] (v3)  [fill=black, label=below:{3}] {};   
\node at (0, 0) [vertex, scale=0.5] (v4)  [label=above:{4}]{};
\node at (2, 0) [vertex, scale=0.5] (v5)  [label=above:{5}]{};  
\node at (1,-1) [vertex, scale=0.5] (v6)  [label=below:{6}] {};    
\node at (1,-4) {$H_6^{a}$};
\end{tikzpicture}
\hfil
  \begin{tikzpicture}[scale=0.3, vertex/.style={draw,shape=circle,fill=white,minimum size=1mm}]
\shadedraw[left color=black!50!white, right color=black!10!white, draw=black!10!white] (1, 2)--(-2, -2)--(0, 0)--cycle;
\shadedraw[left color=black!50!white, right color=black!10!white, draw=black!10!white] (1, 2)--(0, 0)--(2, 0)--cycle;
\shadedraw[left color=black!50!white, right color=black!10!white, draw=black!10!white] (1, 2)--(4, -2)--(2, 0)--cycle;
\shadedraw[left color=black!50!white, right color=black!10!white, draw=black!10!white] (-2, -2)--(0, 0)--(1,-1)--cycle;
\shadedraw[left color=black!50!white, right color=black!10!white, draw=black!10!white] (-2, -2)--(4, -2)--(1,-1)--cycle;
\shadedraw[left color=black!50!white, right color=black!10!white, draw=black!10!white] (0, 0)--(2, 0)--(1,-1)--cycle;

\node at (1, 2) [vertex, scale=0.5] (v1) [fill=black, label=above:{1}] {};   
\node at (-2, -2) [vertex, scale=0.5] (v2) [fill=black, label=left:{2}]   {};
\node at (4, -2) [vertex, scale=0.5] (v3)  [fill=black, label=below:{3}] {};   
\node at (0, 0) [vertex, scale=0.5] (v4)  [label=above:{4}]{};
\node at (2, 0) [vertex, scale=0.5] (v5)  [label=above:{5}]{};  
\node at (1,-1) [vertex, scale=0.5] (v6)  [label=below:{6}] {};    
\node at (1,-4) {$H^{b}_6$};
 \end{tikzpicture}
\end{center}

\begin{center}
\begin{tikzpicture}[scale=0.3, vertex/.style={draw,shape=circle,fill=white,minimum size=1mm}]
\shadedraw[left color=black!50!white, right color=black!10!white, draw=black!10!white] (1, 2)--(-2, -2)--(0, 0)--cycle;
\shadedraw[left color=black!50!white, right color=black!10!white, draw=black!10!white] (1, 2)--(0, 0)--(2, 0)--cycle;
\shadedraw[left color=black!50!white, right color=black!10!white, draw=black!10!white] (1, 2)--(4, -2)--(2, 0)--cycle;
\shadedraw[left color=black!50!white, right color=black!10!white, draw=black!10!white] (-2, -2)--(4, -2)--(1,-1)--cycle;
\shadedraw[left color=black!50!white, right color=black!10!white, draw=black!10!white] (-2, -2)--(0, 0)--(1,-1)--cycle;

\node at (1, 2) [vertex, scale=0.5] (v1) [fill=black, label=above:{1}] {};   
\node at (-2, -2) [vertex, scale=0.5] (v2) [fill=black, label=left:{2}]   {};
\node at (4, -2) [vertex, scale=0.5] (v3)  [fill=black, label=below:{3}] {};   
\node at (0, 0) [vertex, scale=0.5] (v4)  [label=above:{4}]{};
\node at (2, 0) [vertex, scale=0.5] (v5)  [label=above:{5}]{};  
\node at (1,-1) [vertex, scale=0.5] (v6)  [label=below:{6}] {};    
\node at (1,-4) {$H^{c}_6$};
 \end{tikzpicture}
\hfil
 \begin{tikzpicture}[scale=0.3, vertex/.style={draw,shape=circle,fill=white,minimum size=1mm}]
\shadedraw[left color=black!50!white, right color=black!10!white, draw=black!10!white] (1, 2)--(-2, -2)--(0, 0)--cycle;
\shadedraw[left color=black!50!white, right color=black!10!white, draw=black!10!white] (1, 2)--(0, 0)--(2, 0)--cycle;
\shadedraw[left color=black!50!white, right color=black!10!white, draw=black!10!white] (1, 2)--(4, -2)--(2, 0)--cycle;
\shadedraw[left color=black!50!white, right color=black!10!white, draw=black!10!white] (2, 0)--(0, 0)--(1,-1)--cycle;
\shadedraw[left color=black!50!white, right color=black!10!white, draw=black!10!white] (-2, -2)--(4, -2)--(1,-1)--cycle;

\node at (1, 2) [vertex, scale=0.5] (v1) [fill=black, label=above:{1}] {};   
\node at (-2, -2) [vertex, scale=0.5] (v2) [fill=black, label=left:{2}]   {};
\node at (4, -2) [vertex, scale=0.5] (v3)  [fill=black, label=below:{3}] {};   
\node at (0, 0) [vertex, scale=0.5] (v4)  [label=above:{4}]{};
\node at (2, 0) [vertex, scale=0.5] (v5)  [label=above:{5}]{};  
\node at (1,-1) [vertex, scale=0.5] (v6)  [label=below:{6}] {};    
\node at (1,-4) {$H^{d}_6$};
 \end{tikzpicture}
 \hfil
 \begin{tikzpicture}[scale=0.3, vertex/.style={draw,shape=circle,fill=white,minimum size=1mm}]
\shadedraw[left color=black!50!white, right color=black!10!white, draw=black!10!white] (1, 2)--(-2, -2)--(0, 0)--cycle;

\shadedraw[left color=black!50!white, right color=black!10!white, draw=black!10!white] (1, 2)--(4, -2)--(2, 0)--cycle;
\shadedraw[left color=black!50!white, right color=black!10!white, draw=black!10!white] (2, 0)--(0, 0)--(1,-1)--cycle;
\shadedraw[left color=black!50!white, right color=black!10!white, draw=black!10!white] (-2, -2)--(4, -2)--(1,-1)--cycle;

\node at (1, 2) [vertex, scale=0.5] (v1) [fill=black, label=above:{1}] {};   
\node at (-2, -2) [vertex, scale=0.5] (v2) [fill=black, label=left:{2}]   {};
\node at (4, -2) [vertex, scale=0.5] (v3)  [fill=black, label=below:{3}] {};   
\node at (0, 0) [vertex, scale=0.5] (v4)  [label=above:{4}]{};
\node at (2, 0) [vertex, scale=0.5] (v5)  [label=above:{5}]{};  
\node at (1,-1) [vertex, scale=0.5] (v6)  [label=below:{6}] {};    
\node at (1,-4) {$H^{e}_6$};
 \end{tikzpicture}





\end{center} 
 
\end{enumerate}

\section{Non-trivial degenerate $R$-graphs}
Recall that a degenerate $R$-graph $H$ is trivial if it is contained in a blow-up of the chain $C^R$, otherwise, we say $H$ is non-trivial. 
In this section, we will show that except the case $R=\{1, 2\}$, there always exist non-trivial degenerate $R$-graphs for any finite set $R$ of at least two distinct positive integers.   We will use the {\it suspension} operations on  hypergraphs. 
\begin{defn}\cite{JLU}
The suspension of a hypergraph $H$, denoted by $S(H)$, is the hypergraph
with $V = V (H)\cup \{v\}$  where $\{v\}$ is a new vertex not in $V(H)$, and the edge set
$E=\{e\cup \{v\}: e\in E(H)\}$.  We write $S^t(H)$ to denote the hypergraph obtained by iterating
the suspension operation $t$-times, i.e. $S^2(H) = S(S(H))$ and $S^3(H) = S(S(S(H)))$,
etc.
\end{defn}
The relationship between $\pi(H)$ and $\pi(S(H))$ was investigated in \cite{JLU}. 
\begin{prop}\cite{JLU}\label{suspenprop}
For any family of hypergraphs $\mathcal{H}$ we have that $\pi(S(\mathcal{H}))\leq \pi(\mathcal{H})$.
\end{prop}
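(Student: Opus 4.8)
The plan is to argue via \emph{vertex links}. Write $R':=\{r+1:r\in R\}$ for the common edge-type set of the members of $S(\mathcal H)$, where $R$ is the common edge-type set of $\mathcal H$. For a vertex $v$ of an $R'$-graph $G$, let the \emph{link} $L_v(G)$ be the $R$-graph on vertex set $V(G)\setminus\{v\}$ with edge set $\{\,e\setminus\{v\}: v\in e\in E(G)\,\}$. The first step is the observation that if $G$ is $S(\mathcal H)$-free then $L_v(G)$ is $\mathcal H$-free for every $v$: a subgraph copy of some $H\in\mathcal H$ in $L_v(G)$ is an injection $g:V(H)\to V(G)\setminus\{v\}$ with $g(e)\in E(L_v(G))$ for all $e\in E(H)$; extending $g$ by sending the apex vertex of $S(H)$ to $v$ gives an injection witnessing $S(H)\subseteq G$, a contradiction.

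The second step is a counting identity showing that, on average, the links are exactly as dense as $G$. For each $r\in R$, the edges of the level hypergraph $L_v(G)^r$ are precisely the $(r+1)$-edges of $G$ through $v$ with $v$ deleted, so $\sum_{v\in V(G)}|E(L_v(G)^r)|=(r+1)\,|E(G^{r+1})|$. Dividing by $n\binom{n-1}{r}$ and using the identity $n\binom{n-1}{r}=(r+1)\binom{n}{r+1}$, one gets
$$\frac{1}{n}\sum_{v\in V(G)} h_{n-1}\bigl(L_v(G)\bigr)
=\sum_{r\in R}\frac{(r+1)\,|E(G^{r+1})|}{n\binom{n-1}{r}}
=\sum_{r\in R}\frac{|E(G^{r+1})|}{\binom{n}{r+1}}
=h_n(G).$$
In particular there is a vertex $v_0$ with $h_{n-1}(L_{v_0}(G))\ge h_n(G)$.

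Finally I would assemble the two steps. Let $G$ be any $S(\mathcal H)$-free $R'$-graph on $n$ vertices, and choose $v_0$ as above. Since $L_{v_0}(G)$ is an $\mathcal H$-free $R$-graph on $n-1$ vertices, $h_n(G)\le h_{n-1}(L_{v_0}(G))\le \pi_{n-1}(\mathcal H)$. Taking the maximum over all such $G$ gives $\pi_n(S(\mathcal H))\le \pi_{n-1}(\mathcal H)$, and letting $n\to\infty$ — both limits existing by the averaging argument recalled in Section~1 — yields $\pi(S(\mathcal H))\le\pi(\mathcal H)$.

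I do not expect a serious obstacle here: this is a clean link-and-average argument, the non-uniform analogue of the standard fact for uniform hypergraphs. The two points needing care are (i) the binomial bookkeeping, where the cancellation $n\binom{n-1}{r}=(r+1)\binom{n}{r+1}$ is exactly what makes the average of the link Lubell functions equal to $h_n(G)$ rather than merely comparable to it; and (ii) the passage from ``$S(H)\not\subseteq G$'' to ``$H\not\subseteq L_v(G)$ for all $v$'', which requires the (easy) verification that adjoining a common apex vertex to a copy of $H$ in a link produces a genuine subgraph copy of $S(H)$ in $G$, using that this apex is distinct from all image vertices of the link.
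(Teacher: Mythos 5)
Your proof is correct: the link $L_v(G)$ of an $S(\mathcal H)$-free $R'$-graph is indeed $\mathcal H$-free, the identity $n\binom{n-1}{r}=(r+1)\binom{n}{r+1}$ makes the average of the links' Lubell functions equal to $h_n(G)$, and this gives $\pi_n(S(\mathcal H))\leq\pi_{n-1}(\mathcal H)$ as you state. The paper only cites \cite{JLU} for this proposition, but your vertex-link averaging is essentially the same argument used there and reproduced in the paper's own proof of Lemma \ref{partialsuspension}, so there is no substantive difference in approach.
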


Given a general set $R$ and positive integer $t$, we denote $(R+t)$ as the set obtained from $R$ by adding $t$ to each element of $R$. 
Note that if the $R$-graph $H$ is not contained in a blow-up of chain $C^R$, then $S^t(H)$ is not contained in a blow-up of the chain $C^{(R+t)}$. Thus we have the following fact:
\begin{corollary}\label{atimessuspension}
Let $H$ be a non-trivial degenerate $R$-graph,  let $t$ be any positive integer. Then the $t$-times suspension $S^t(H)$ is a non-trivial degenerate $(R+t)$-graph. 
\end{corollary}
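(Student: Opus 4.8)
The plan is to check the two defining properties of a \emph{non-trivial degenerate} hypergraph for $S^t(H)$ separately. Write $R=\{k_1<k_2<\cdots<k_r\}$; since $x\mapsto x+t$ is injective we have $R(S^t(H))=R+t$ with $|R+t|=|R|$. Let $v_1,\dots,v_t$ be the new vertices created by the iterated suspension, so that $E(S^t(H))=\{e\cup\{v_1,\dots,v_t\}:e\in E(H)\}$.

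For \emph{degeneracy}: applying Proposition~\ref{suspenprop} to the singleton family $t$ times gives $\pi(S^t(H))\le\pi(S^{t-1}(H))\le\cdots\le\pi(H)=|R|-1$, the last equality being the hypothesis that $H$ is degenerate. On the other hand, the general lower bound $\pi(G)\ge|R(G)|-1$ recorded in Section~1 applied to $G=S^t(H)$ gives $\pi(S^t(H))\ge|R+t|-1=|R|-1$. Hence $\pi(S^t(H))=|R|-1=|R(S^t(H))|-1$, i.e.\ $S^t(H)$ is degenerate.

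For \emph{non-triviality} I would argue by contraposition: assuming $S^t(H)$ embeds into some blow-up of $C^{R+t}$, I recover an embedding of $H$ into a blow-up of $C^R$, contradicting the non-triviality of $H$. By elementary monotonicity of blow-ups we may take this blow-up to be uniform, $C^{R+t}(N)$ with vertex classes $V_1,\dots,V_{k_r+t}$, and we may delete the isolated vertices of $H$ (they affect neither triviality nor $\pi$). Fix an injection $f\colon V(S^t(H))\to V(C^{R+t}(N))$ sending edges to edges. Since $f$ is injective, the image of an $m$-edge is an $m$-edge of $C^{R+t}(N)$, hence a transversal of $V_1,\dots,V_m$. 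The vertices $v_1,\dots,v_t$ lie in \emph{every} edge of $S^t(H)$; applying this to a smallest edge $e^*\cup\{v_1,\dots,v_t\}$ with $|e^*|=k_1$ shows that $f(v_1),\dots,f(v_t)$ occupy $t$ distinct classes, indexed by a set $T\subseteq\{1,\dots,k_1+t\}$. Then for every $e\in E(H)$ with $|e|=k_a$, the transversal $f(e\cup\{v_1,\dots,v_t\})$ meets $V_1,\dots,V_{k_a+t}$, the $v_i$'s use exactly the classes indexed by $T\subseteq\{1,\dots,k_a+t\}$, so $f(e)$ is a transversal of the classes indexed by $\{1,\dots,k_a+t\}\setminus T$; as $|T|=t$ this set is precisely the $k_a$ smallest elements of $\{1,\dots,k_r+t\}\setminus T$. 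Relabelling the $k_r$ classes $\{V_i:i\notin T\}$ in increasing order of index as the $k_r$ vertex classes of $C^R(N)$ converts $f$ into an embedding $H\hookrightarrow C^R(N)$ (every non-isolated vertex of $H$ lands outside $\bigcup_{i\in T}V_i$ by the displayed transversal property), the desired contradiction.

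The routine parts (the two limit inequalities, the reduction to a uniform blow-up, discarding isolated vertices) follow directly from material already in the paper. The step needing care is the last one: one must verify that the ``suspension classes'' $\{V_i:i\in T\}$ form a single fixed set of classes, disjoint from the classes used by the non-suspension vertices of each edge, and that deleting them leaves exactly a blow-up of $C^R$ rather than of some other $R$-flag. This is the combinatorial heart of the argument, and it is precisely the observation stated just before the corollary.
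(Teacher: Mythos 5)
Your proof is correct and follows essentially the same route as the paper: degeneracy via iterating Proposition~\ref{suspenprop} together with the trivial lower bound $\pi(G)\geq |R(G)|-1$, and non-triviality via the observation (stated just before the corollary) that an embedding of $S^t(H)$ into a blow-up of $C^{R+t}$ would yield an embedding of $H$ into a blow-up of $C^R$. The only difference is that you spell out in detail the transversal/relabelling argument that the paper leaves as an unproved remark, and your verification of it is sound.
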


\begin{lemma}\label{partialsuspension}
Given a  positive integer $t\geq 2$, and a $\{1, t\}$-graph $H$, let $T(H)$ be the $\{1, t+1\}$-graph obtained from $H$ by adding a new vertex $v\not\in V(H)$ such that $V=V(H)\cup \{v\}$,  $T(H)^1=H^1$ and $T(H)^{1+t}=\{e\cup \{v\}: e\in E(H^t)\}$. Then we have
$\pi(T(H))\leq \pi(H)$. 
\end{lemma}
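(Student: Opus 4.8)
The plan is to run the link-and-averaging argument behind the suspension bound (Proposition \ref{suspenprop}), but lifting only the $t$-edges. Fix a $T(H)$-free $\{1,t+1\}$-graph $G$ on $n$ vertices, and for each $w\in V(G)$ define a \emph{partial link} $\{1,t\}$-graph $G_w$ on $V(G)\setminus\{w\}$ as follows: its $1$-edges are exactly the $1$-edges $\{u\}\in E(G)$ with $u\neq w$ (the $1$-edges are simply inherited, \emph{not} lifted, mirroring the fact that $T(H)^1=H^1$ does not involve the apex $v$), and its $t$-edges are $\{e\in\binom{V(G)\setminus\{w\}}{t}:e\cup\{w\}\in E(G^{t+1})\}$.

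First I would check that each $G_w$ is $H$-free. If $\phi\colon V(H)\to V(G)\setminus\{w\}$ embedded $H$ into $G_w$, extend it to $\psi\colon V(T(H))\to V(G)$ by $\psi|_{V(H)}=\phi$ and $\psi(v)=w$; since $w\notin\phi(V(H))$, the map $\psi$ is injective. A $1$-edge $\{x\}$ of $T(H)$ is a $1$-edge of $H$, so $\{\psi(x)\}=\{\phi(x)\}$ is a $1$-edge of $G_w$, hence of $G$; a $(t+1)$-edge of $T(H)$ has the form $e\cup\{v\}$ with $e\in E(H^t)$, and $\psi(e\cup\{v\})=\phi(e)\cup\{w\}$, which lies in $E(G^{t+1})$ because $\phi(e)$ is a $t$-edge of $G_w$. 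Thus $\psi$ embeds $T(H)$ into $G$, a contradiction, so $h_{n-1}(G_w)\le\pi_{n-1}(H)$ for every $w$.

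Second comes the averaging identity. Write $m_1=|E(G^1)|$ and $m_{t+1}=|E(G^{t+1})|$. Each $1$-edge of $G$ is a $1$-edge of $G_w$ for exactly the $n-1$ vertices $w$ not contained in it, and each $(t+1)$-edge of $G$ is a $t$-edge of $G_w$ for exactly its $t+1$ constituent vertices $w$, so
\[
\sum_{w\in V(G)}h_{n-1}(G_w)=\frac{(n-1)m_1}{n-1}+\frac{(t+1)m_{t+1}}{\binom{n-1}{t}}=m_1+\frac{(t+1)m_{t+1}}{\binom{n-1}{t}}.
\]
On the other hand, using $\binom{n}{t+1}=\frac{n}{t+1}\binom{n-1}{t}$ one sees that $n\,h_n(G)=m_1+\frac{n\,m_{t+1}}{\binom{n}{t+1}}=m_1+\frac{(t+1)m_{t+1}}{\binom{n-1}{t}}$ as well. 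Hence $n\,h_n(G)=\sum_{w}h_{n-1}(G_w)\le n\,\pi_{n-1}(H)$, so $h_n(G)\le\pi_{n-1}(H)$; maximizing over $T(H)$-free $G$ gives $\pi_n(T(H))\le\pi_{n-1}(H)$ for all $n\ge t+1$, and letting $n\to\infty$ (both limits exist by \cite{JLU}) yields $\pi(T(H))\le\pi(H)$. The same computation covers the degenerate cases in which $G$ has no $1$-edges or no $(t+1)$-edges, so no separate treatment is needed.

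There is essentially no deep obstacle here; the one place to be careful is the last pair of equalities, i.e.\ verifying that the inherited (un-lifted) $1$-edges together with the lifted $(t+1)$-edges recombine to exactly $n\,h_n(G)$ — it is this exact match that produces the clean inequality $\pi_n(T(H))\le\pi_{n-1}(H)$ rather than a lossy one.
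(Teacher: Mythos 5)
Your proposal is correct and is essentially the paper's own argument: you form the same vertex links (keeping the $1$-edges, taking the link of each vertex in the $(t+1)$-edges), verify $H$-freeness of each link, and use the same averaging identity $n\,h_n(G)=\sum_{w}h_{n-1}(G_w)\le n\,\pi_{n-1}(H)$ before passing to the limit. The only difference is cosmetic: you spell out the embedding argument for $H$-freeness, which the paper leaves as an observation.
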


\begin{proof}
Let $n$ be a positive integer and $G=(V, E)$ be an extremal $T(H)$-free $\{1, t+1\}$-graph on $n$ vertices. We have $\pi_n(T(H))=h_n(G)$. Denote $E_i$ as the set of $i$-edges of $G$, for $i=1, t+1$.  
For any vertex $v\in V(G)$, denote $G_v$ as the hypergraph obtained from $G$ with the vertex set $V(G_v)=V\setminus \{v\}$ and the edge sets $E(G_v)=E_{v,1} \cup E_{v, t}$, where
$E_{v,1}=\{u\in V(G_v): u\in E_1\}$ and  $E_{v, t}=\{\{u_1, \ldots, u_t\}: \{v,u_1, \ldots, u_t\} \in E_{t+1}\}$. 
Observe that $G_v$ is an $H$-free $\{1, t\}$-graph on $n-1$ vertices. Thus $h_{n-1}(G_v)\leq \pi_{n-1}(H).$

Since  $$|E_1|= \frac{1}{n-1}\sum\limits_{v\in V(G)} |E_{v,1}| ~~~~ \text{and}~~~~ |E_{t+1}|=\frac{1}{(t+1)}\sum\limits_{v\in V(G)} |E_{v,t}|,$$
then 
\begin{align*}
h_n(G)&=\frac{|E_1|}{{n\choose 1}} + \frac{|E_{1+t}|}{{n\choose 1+t}}\\
&=\sum\limits_{v\in V(G)} \frac{|E_{v,1}|}{(n-1){n\choose 1}} + 
\sum\limits_{v\in V(G)} \frac{|E_{v,t}|}{(t+1){n\choose 1+t}}\\
& =\frac{1}{n}\sum\limits_{v\in V(G)} \left(  \frac{|E_{v,1}|}{{n-1\choose 1}}+ \frac{|E_{v,t}|}{{n-1\choose t}}\right)\\
&= \frac{1}{n}\sum\limits_{v\in V(G)} h_{n-1}(G_v) \\
&\leq \pi_{n-1}(H). 
\end{align*}
Thus $\pi(T(H))=\lim_{n\to \infty}\pi_n(T(H))=\lim_{n\to \infty}h_n(G)\leq \pi(H)$. 
\end{proof}

\begin{lemma}\label{R=2}
Let $R$ be a set of two distinct positive integers, $R\neq \{1, 2\}$. Then there exist non-trivial degenerate $R$-graphs. 
\end{lemma}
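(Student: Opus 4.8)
The plan is to reduce the whole statement to two ``seeds'' — the non-trivial degenerate $\{1,3\}$-graph $H_5^{\{1,3\}}$ produced by Theorem~\ref{degetheorem}, and the non-trivial degenerate $\{2,3\}$-graph of \cite{JLU} — and then to reach an arbitrary $R=\{a,b\}$ (with $a<b$ and $R\neq\{1,2\}$) by applying the two operations already available: the partial suspension $T$ of Lemma~\ref{partialsuspension}, which sends a $\{1,t\}$-graph to a $\{1,t{+}1\}$-graph and does not increase the Tur\'an density, and the iterated full suspension $S^t$ of Corollary~\ref{atimessuspension}, which sends a non-trivial degenerate $R$-graph to a non-trivial degenerate $(R+t)$-graph.

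First I would record that $H_5^{\{1,3\}}$ is itself non-trivial: a homomorphism $\varphi\colon H_5^{\{1,3\}}\to C^{\{1,3\}}=\{1,123\}$ would force $\varphi(2)=\varphi(3)=1$, whence the edges $124$ and $135$ give $\varphi(1),\varphi(4),\varphi(5)\in\{2,3\}$, so the edge $145$ cannot be mapped onto $\{1,2,3\}$ — a contradiction. Thus $H_5^{\{1,3\}}$ is not a subgraph of a blow-up of the chain, and since it is $H_5^{\{1,3\}}$-colorable it has Tur\'an density $1$ by Theorem~\ref{degetheorem} (equivalently Lemma~\ref{13H5a}); so it is a non-trivial degenerate $\{1,3\}$-graph.

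The key auxiliary claim is that \emph{if a $\{1,t\}$-graph $H$ with $t\ge 2$ is non-trivial, then $T(H)$ is non-trivial}. I would prove the contrapositive: suppose $\varphi\colon T(H)\to C^{\{1,t+1\}}$ is a homomorphism and let $c=\varphi(v)$ for the apex vertex $v$. Every $(1{+}t)$-edge of $T(H)$ has the form $e\cup\{v\}$ with $e\in E(H^t)$ and must be mapped bijectively onto the unique $(t{+}1)$-edge $[t+1]$, so each $t$-edge $e$ of $H$ is mapped bijectively onto $[t+1]\setminus\{c\}$, while every black vertex of $H$ goes to $1$. If $c\neq 1$, then $1=\min\bigl([t+1]\setminus\{c\}\bigr)$, and composing $\varphi|_{V(H)}$ with the increasing bijection $[t+1]\setminus\{c\}\to[t]$ (which fixes $1$) yields a homomorphism $H\to C^{\{1,t\}}$. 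If $c=1$, then no black vertex of $H$ lies in a $t$-edge (black vertices go to $1\notin[t+1]\setminus\{1\}$), so defining $h$ to send black vertices to $1$, vertices lying in a $t$-edge via $j\mapsto j-1$, and all remaining vertices to $1$, again gives a homomorphism $H\to C^{\{1,t\}}$. In either case $H$ is trivial, proving the claim.

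Finally I would assemble the three cases. If $a=1$, then $b\ge 3$ and $T^{b-3}(H_5^{\{1,3\}})$ is a $\{1,b\}$-graph that is degenerate by Lemma~\ref{partialsuspension} combined with $\pi\ge|R|-1$, and non-trivial by $b-3$ applications of the claim. If $a\ge 2$ and $b\ge a+2$, then $\{1,b-a+1\}$ falls under the previous case ($b-a+1\ge 3$), and applying $S^{a-1}$ to the non-trivial degenerate $\{1,b-a+1\}$-graph, Corollary~\ref{atimessuspension} gives a non-trivial degenerate $\{a,b\}$-graph. If $b=a+1$, then $a\ge 2$ since $R\neq\{1,2\}$, and applying $S^{a-2}$ to the non-trivial degenerate $\{2,3\}$-graph of \cite{JLU}, Corollary~\ref{atimessuspension} again gives a non-trivial degenerate $\{a,a+1\}$-graph. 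The main obstacle is the auxiliary claim — verifying that partial suspension cannot accidentally make a graph chain-colorable — and within it the case $c=1$, where one must observe that the apex receiving the ``black'' color forces the original $1$-edges off the top-level parts, which is precisely what permits them to be folded back into the chain.
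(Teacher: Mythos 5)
Your proposal is correct and follows essentially the same route as the paper: partial suspensions of $H_5^{\{1,3\}}$ give the $\{1,b\}$ cases, full suspensions (Corollary~\ref{atimessuspension}) shift these to $\{a,b\}$ with $b-a\geq 2$, and suspensions of the $\{2,3\}$-graph $H_4^{\{2,3\}}$ of \cite{JLU} handle consecutive pairs. The only difference is that you explicitly verify the two non-triviality facts the paper asserts implicitly --- that $H_5^{\{1,3\}}$ is not $C^{\{1,3\}}$-colorable and that the partial suspension $T$ preserves non-triviality --- and both of your verifications are sound.
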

\begin{proof}
By Corollary \ref{atimessuspension},  for every positive integer $k$, one can take the suspension of $H_5^{\{1, 3\}}$ $k$-times, the resulting graph $S^k(H_5^{\{1, 3\}})$ is a non-trivial degenerate $\{1+k, 3+k\}$-graph.  Thus there are non-trivial degenerate hypergraphs of edge types: $\{1, 3\}, \{2, 4\}, \{3, 5\}, \ldots, \{k, k+2\}, \ldots$. 

In \cite{JLU}, the authors found a  non-trivial degenerate $\{2, 3\}$-graph: $H_4^{\{2, 3\}}=\{12, 13, 234\}$. 
Similarly, by Corollary \ref{atimessuspension}, there are non-trivial degenerate hypergraphs of edge types: $\{2, 3\}, \{3, 4\}, \{4, 5\}, \ldots, \{k, k+1\}, \ldots$. 

Using Lemma \ref{partialsuspension} on $H_5^{\{1, 3\}}$, there are non-trivial degenerate hypergraphs of edge types: $\{1, 4\}, \{1, 5\}, \ldots, \{1, t\}, \ldots$, for integer $t\geq 4$. For each of these non-trivial degenerate $\{1+t\}$-graphs, applying Corollary \ref{atimessuspension}, there are non-trivial degenerate hypergraphs of edge types: $\{2, 1+t\}, \{3, 2+t\}, \ldots, \{k, k-1+t\},  \ldots$. 

To summarize, for each integer $k\geq 2$ and each integer $t\geq 3$, we have non-trivial degenerate hypergraphs of edge types $\{1, t\}, \{k, k+1\}, \{k, k+2\}, \{k, k+t\}$, which cover all sets of two distinct positive integers, except $\{1, 2\}$. 
\end{proof}

\begin{lemma}\label{RtoR+1}
Let $R$ be a set of distinct positive integers with $|R|\geq 2$ and $1\not\in R$. If there exist non-trivial degenerate $R$-graphs, then there exist non-trivial 
degenerate $\{1\}\cup R$-graphs.
\end{lemma}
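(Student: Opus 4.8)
The plan is to obtain the required graph by a minimal modification of the hypothesis. Let $H$ be a non-trivial degenerate $R$-graph, so that $R(H)=R$ and $\pi(H)=|R|-1$, and let $H'=H\sqcup\{w\}$ be the $\{1\}\cup R$-graph obtained by adjoining one new vertex $w$ which forms a $1$-edge and lies in no other edge. Since $1\notin R$ we have $R(H')=\{1\}\cup R$, hence $|R(H')|=|R|+1$, and the condition that $H'$ be degenerate is precisely $\pi(H')=|R|$. The lower bound $\pi(H')\ge |R(H')|-1=|R|$ is the easy bound recalled in the introduction, so only two things remain: (i) the matching upper bound $\pi(H')\le |R|$, and (ii) non-triviality of $H'$.

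For (ii) I would compare $R$-levels. Writing $R=\{k_1<\dots<k_r\}$, the chain $C^{\{1\}\cup R}$ has vertex set $[k_r]$ and edge set $\{[1],[k_1],\dots,[k_r]\}$; its edges whose size lies in $R$ are exactly $[k_1],\dots,[k_r]$, which constitute $C^R$. Consequently the union of the $R$-levels of any blow-up of $C^{\{1\}\cup R}$ is a blow-up of $C^R$. If $H'$ were a sub-graph of some blow-up of $C^{\{1\}\cup R}$, then restricting the embedding to $V(H)$ and to edges of size at least $2$ would exhibit $H$ as a sub-graph of a blow-up of $C^R$, contradicting the non-triviality of $H$. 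Hence $H'$ is non-trivial.

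For (i), fix $\epsilon>0$ and suppose, for contradiction, that for arbitrarily large $n$ there is an $H'$-free $\{1\}\cup R$-graph $G$ on $n$ vertices with $h_n(G)\ge |R|+\epsilon$. Let $S$ be the set of vertices of $G$ spanning a $1$-edge, and let $G_0$ be the $R$-graph consisting of all edges of $G$ of size at least $2$, so that $h_n(G)=|S|/n+h_n(G_0)$. Since $h_n(G_0)\le |R|$ we get $|S|\ge\epsilon n$, and since $|S|/n\le 1$ we get $h_n(G_0)\ge (|R|-1)+\epsilon$. Next I use the averaging identity $\sum_{v\in V(G)}h_{n-1}(G_0-v)=n\,h_n(G_0)$ (which follows from $\binom{n-1}{j}=\tfrac{n-j}{n}\binom{n}{j}$); restricting the sum to $v\in S$ costs only a factor $1-O(k_r/|S|)=1-o(1)$, because every edge of $G_0$ meets $S$ in at most $k_r=\max R$ vertices, so some $s_0\in S$ satisfies $h_{n-1}(G_0-s_0)\ge (1-o(1))\big((|R|-1)+\epsilon\big)>(|R|-1)+\epsilon/2$ once $n$ is large. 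Since $\pi_{n-1}(H)\to |R|-1$, for $n$ large every $H$-free $R$-graph on $n-1$ vertices has Lubell value at most $(|R|-1)+\epsilon/2$; therefore $G_0-s_0$ contains a copy of $H$, on a vertex set disjoint from $s_0$. Together with the $1$-edge $\{s_0\}$ this is a copy of $H'=H\sqcup\{w\}$ inside $G$, contradicting $H'$-freeness. Hence $\pi_n(H')\le |R|+\epsilon$ for all large $n$, and letting $\epsilon\to 0$ gives $\pi(H')\le |R|$, so $\pi(H')=|R|$ as required.

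The one step needing genuine care is the last one: extracting a copy of $H$ from inside $G_0$ while keeping it disjoint from an available $1$-edge. This is why the linear lower bound $|S|\ge\epsilon n$ is peeled off first (so that averaging over $S$ rather than over all of $V(G)$ loses only a $1-o(1)$ factor), and why the uniform bound $k_r=\max R$ on edge sizes is used to control how much of each $R$-edge can fall inside $S$. Everything else is routine bookkeeping with the Lubell function and the definitions of degeneracy and triviality.
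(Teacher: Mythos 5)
Your proof is correct, and it rests on the same construction as the paper's: take $H'$ to be the disjoint union of $H$ with one new black vertex, prove non-triviality by observing that the $R$-levels of a blow-up of $C^{\{1\}\cup R}$ form a blow-up of $C^R$, and establish $\pi(H')\le |R|$ so that the trivial lower bound $\pi(H')\ge |R|$ is matched. Where you genuinely diverge is in the execution of the upper bound. The paper takes an $H'$-free $G$, writes $h_n(G)=|E_1|/n+h_n(G^R)$ where $G^R$ consists of the edges with sizes in $R$, and asserts outright that $G^R$ is $H$-free, giving $h_n(G)\le 1+\pi_n(H)$ in two lines; strictly speaking that assertion needs a small justification, since a copy of $H$ inside $G^R$ only produces a copy of $H'$ if some $1$-edge of $G$ lies \emph{outside} the copy. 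Your argument addresses exactly this point: after peeling off the $1$-edges to force $|S|\ge\epsilon n$, you run a Katona--Nemetz--Simonovits-type vertex-deletion average restricted to $S$ (the same kind of averaging the paper uses in Lemma~\ref{partialsuspension}, but not here) to locate $s_0\in S$ with $h_{n-1}(G_0-s_0)>\pi_{n-1}(H)$, which yields a copy of $H$ disjoint from the $1$-edge $\{s_0\}$ and hence a copy of $H'$, a contradiction. What the paper's route buys is brevity and an exact inequality $\pi_n(H')\le 1+\pi_n(H)$ for every $n$; what your route buys is an explicit handling of the disjointness between the copy of $H$ and the attached black vertex (the one delicate point), at the cost of the $\epsilon$/large-$n$ bookkeeping. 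Both are sound, and your version is, if anything, the more airtight of the two.
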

\begin{proof}
For each $R$ stated in the lemma, let $H$  be the non-trivial 
degenerate $R$-graph. 
Let $H'$ be the disjoint union of $H$ with a single $1$-edge $v\not\in H$.
Clearly,  $H'$ is not contained in a  blow-up of chain 
$C^{\{1\}\cup R}$.  We will prove that $H'$ is also degenerate. 

Let $n$ be a positive integer and $G=(V, E)$ be an extremal $H'$-free $\{1\}\cup R$-graph on $n$ vertices.  We have $\pi_n(H')=h_n(G)$. 
Denote $E_i$ as the set of $i$-edges of $G$, for each $i\in \{1\}\cup R$.
For any $1$-edge $v\in E_1$, consider the sub-graph $G_v$ of $G$ by removing all $1$-edges (keep the vertices of these $1$-edges in $G_v$). Then the vertex set $V(G_v)=V$, set of $i$-edges $E_i(G_v)=E_i(G)$ for each $i\in R$.
Then we have 
$$|E_i(G)| = \frac{1}{|E_1|} \sum\limits_{v\in E_1}|E_i(G_v)|, \ \forall i\in R. $$

Observe that $G_v$ is an $H$-free $R$-graph on $n$ vertices, so $\pi_n(H)\geq h_n(G_v)$. 
Then we have
\begin{align*}
h_n(G)&=\sum\limits_{i\in \{1\}\cup R} \frac{|E_{i}|}{{n\choose i}} \\
&=\frac{|E_1|}{{n\choose 1}} + \sum\limits_{i\in R}\sum\limits_{v\in E_1} \frac{|E_i(G_v)|}{|E_1|{n\choose i}}\\
&\leq 1+  \frac{1}{|E_1|} \sum\limits_{v\in E_1} h_n(G_v) \\
&\leq 1+  \pi_n(H).
\end{align*}
Thus $\pi(H')=\lim_{n\to \infty}\pi_n(H')=\lim_{n\to \infty}h_n(G)\leq 1+  \pi(H)=|R|$, then 
$\pi(H')=|R|$. Therefore,  $H'$ is a non-trivial degenerate $\{1\}\cup R$-graph. 
\end{proof}
\begin{proof}[Proof of  Theorem \ref{anyRnontrivial}]
Using the non-trivial degenerate $R$-graph for $R$ stated in Lemma \ref{R=2}, then apply Lemma \ref{RtoR+1}, 
we obtain non-trivial degenerate $R$-graphs for $|R|=3 $ and $1\in R$.  Apply Corollary \ref{atimessuspension}, we then obtain all other non-trivial degenerate $R$-graphs for $|R|=3$.  Repeatedly apply Lemma \ref{RtoR+1} and Corollary \ref{atimessuspension}, we can obtain all $R$-graphs for $|R|\geq 4$, the result follows.
\end{proof}

We conjecture that for any set $R$, there exists an $R$-graph $H^R$ such that if $G^R$ is $R$-degenerate if and only if $G^R$ is $H^R$-colorable. This conjecture is true for the case $R=\{r\}$ with $r\geq 2$ and  $R=\{1, 2\}$ and is confirmed for $R=\{1, 3\}$ in this paper. 

\end{document}